\newcommand\cadlag{c{\'a}dl{\'a}g }
\newcommand\ps{$\mathfrak{P}=(\Omega,\mathcal{F},\mathbb{P})$}
\newcommand{\lb}{\langle}
\newcommand{\rb}{\rangle}
\newcommand{\embed}{\hookrightarrow}
\def\p{\parallel}
\newcommand{\Sub}{\mathrm{Sub}}
\newcommand{\LSub}{\mathrm{LSub}}
\newcommand{\cL}{{\mathcal L}}
\newcommand{\bounded}{{bnd} }
\newcommand{\unbounded}{{ubnd} }
\renewcommand{\Re}{\mathrm{Re}}
\newcommand{\PV}{\mathrm{PV}}
\newcommand{\drift}{b}
\def\todown{\searrow}
\newcommand\cyr{%
\renewcommand\rmdefault{wncyr}%
\renewcommand\sfdefault{wncyss}%
\renewcommand\encodingdefault{OT2}%
\normalfont \selectfont} \DeclareTextFontCommand{\textcyr}{\cyr}
\def\eps{\varepsilon}
\newcommand{\rf}[1]{(\ref{#1})}
\newcommand{\n}{\,\hbox{\vrule width 1pt height 8ptdepth 2pt}\,}
\theoremstyle{plain}
\newtheorem{theorem}{Theorem}[section]
\theoremstyle{remark}
\newtheorem{remark}[theorem]{Remark}
\newtheorem{example}{Example}[section]
\newtheorem{remarks}[theorem]{Remarks}
\theoremstyle{plain}
\newtheorem{corollary}{Corollary}[section]
\newtheorem{lemma}[theorem]{Lemma}
\newtheorem{proposition}[theorem]{Proposition}
\newtheorem{definition}[theorem]{Definition}
\begin{document}
\baselineskip 17.5pt 
\numberwithin{equation}{section}

\title[OU process driven by a L{\'e}vy noise]
{Regularity of  Ornstein-Uhlenbeck processes  driven by a
L{\'e}vy white noise }

\author[Z Brze{\'z}niak]{Zdzis{\l}aw Brze{\'z}niak}
\address{Department of Mathematics\\
The University of York\\
Heslington, York YO10 5DD, UK} \email{zb500@york.ac.uk}

\author[J Zabczyk]{Jerzy Zabczyk}
\address{Institute of Mathematics, Polish Academy of Sciences, P-00-950 Warszawa, Poland}
\email{zabczyk@panim.impan.gov.pl}

\thanks{ \noindent \! \!  Supported by the Polish
Ministry of Science and Education project 1PO 3A 034 29 ``Stochastic
evolution equations with \! \! L\'evy noise''.}



\begin{abstract}
The paper is concerned with spatial and time  regularity of
solutions to linear stochastic evolution equation perturbed by
L{\'e}vy white noise "obtained by subordination of a Gaussian white
noise". Sufficient conditions for spatial continuity are derived. It
is also shown that solutions do not have in general \cadlag
modifications. General results are applied to equations with
fractional Laplacian. Applications to Burgers stochastic equations
are considered as well.
\end{abstract}

\date{\today}

\maketitle

\section{Introduction}
\label{sec_intro}

The problem of spatial and time regularity of linear
stochastic equation with Gaussian noise is well understood, see e.g. \cite{DaPrato-Zab-1992}. The case
of L{\'e}vy type perturbations has not been investigated earlier
with an exception of the paper \cite{Lesc_R_2004} by Lescot and
R{\"o}ckner, treating stochastic heat equation with $\alpha$ stable
noise.

The present paper is devoted to a systematic study of the spatial
and time  regularity of the Ornstein-Uhlenbeck process with a
general L{\'e}vy white noise generalizing the so called stable white
noise. Let us recall that a $\mathbb{R}^d$ valued Wiener process
subordinated by a $\frac\alpha2$-stable, with $\alpha\in (0,2)$,
increasing process is a symmetric $\alpha$-stable process on
$\mathbb{R}^d$. It is therefore   natural to define $\alpha $-stable
white noise as a cylindrical Wiener processes on Hilbert spaces
subordinated by a real, $\frac\alpha2$-stable process, with
$\alpha\in (0,2)$. For an example see \cite{Lesc_R_2004}.  More
generally we define L{\'e}vy white noise as a cylindrical Wiener
process subordinated by an arbitrary real valued, increasing
L{\'e}vy process. It can be regarded as an alternative to
cylindrical L{\'e}vy processes introduced, for instance,  in
\cite{Peszat_Z_2007}, see also
\cite{Mueller,Mytnik,Peszat_Z_impulsive}. It is difficult to judge
at the moment which class will better suit the modelling purposes.

Let $Y$ be a L{\'e}vy white noise process on a Hilbert space $H$. Then the
Ornstein-Uhlenbeck process $X$ driven by $Y$ with generator $A$ is a
solution of the so called Langevin equation
\begin{equation}
\label{eqn_langevin_01}
\left\{\begin{array}{rcl}
dX(t)&=&AX(t)\,dt +dY(t),\; t\geq 0,\\
X(0)&=&x,
\end{array}\right.
\end{equation}
where $x\in E$ and $A$ is assumed to be an infinitesimal generator
of a $C_0$ semigroup $\textsf{S}=\big(S(t)\big)_{t\geq 0}$, on a Banach space $E$. In the most interesting cases the semigroup $\textsf{S}$ is  analytic and the space $E$ is  a subspace of the space $H$, i.e. $E\subset H$. Moreover,
\begin{equation}
\label{eqn_subordination}
Y(t)=W(Z(t)),\; t\geq 0,\\
\end{equation}
where $Z$ is a subordinator and $W$ is a cylindrical Wiener process
on $H$ that lives on some Banach space $U \supset H$. The latter
property is equivalent to the fact the embedding $H\embed U$ is
$\gamma$-radonifying, see e.g. \cite{Brzezniak_1995}. Our main aim
is to present conditions under which the solutions to
(\ref{eqn_langevin_01}) take values in the space $E$. This type of
regularity is of prime interest for the study of nonlinear
stochastic equations
\begin{equation}\label{eqn_nonlinear_01}
du(t)=(Au(t) + F(u(t))\,dt +dY(t),\; t\geq 0, \,u(0)=x,
\end{equation}
where $F$ is a nonlinear transformation. In fact if $Y_{A}$ is a
solution to (\ref{eqn_langevin_01}), with $x=0$, then
(\ref{eqn_nonlinear_01}) can be equivalently written as a stochastic
integral equation with random coefficients determined by $Y_A$:
\begin{equation}\label{eqn_nonlinear_02}
u(t) = S(t) x + \int_{0}^{t} S(t-s) F((u(s)+ Y_{A}(s))ds,\,\,t\geq
0.
\end{equation}
Thus the spatial regularity of $Y_A$ determines how irregular the
nonlinearity $F$ can be.  Of some importance here is also the
time regularity of the process. As we will see,  the solution
to \ref{eqn_langevin_01} does not have, in general, locally bounded
trajectories and therefore, to treat nonlinear equations
\ref{eqn_nonlinear_02} it is also necessary  to determine
when the process $Y_A$ has  trajectories  integrable in some powers.

The paper is organized as follows.  In the Preliminaries we recall
some basic results on  subordination. The nonstandard feature of our
presentation is that  the subordinated process is a
cylindrical one. In the next section we gather results on stochastic
integrals of Banach valued functions with respect to Poison  random
measure. Our main result is Theorem 4.1 from Section 4, formulating
sufficient conditions on the semigroup $S$ and on the intensity of
the subordinator implying regularity. Various applications are
treated in Section 5. A typical example  is the following equation
\begin{equation}
\label{eqn_langevin_Delta''} \left\{\begin{array}{rcl}
dX(t)&=&-(-\Delta)^\gamma X(t)+dY(t),\; t\geq 0,\\
X(0)&=&0
\end{array}\right.
\end{equation}
were $\gamma$ is a positive constant, $H= L^2(\mathcal{O})$ with
$\mathcal{O}\subset \mathbb{R}^d$ and $(-\Delta)^\gamma$ is the
fractional power of the Laplace operator $\Delta$ with the Dirichlet
boundary conditions. In particular, if $\gamma =1$, we get
stochastic heat equation
\begin{equation}
\label{eqn_langevin_Delta}
\left\{\begin{array}{rcl}
dX(t)&=&\Delta X(t)+dY(t),\; t\geq 0,\\
X(0)&=&0,
\end{array}\right.
\end{equation}
Our paper strengthens considerably a recent result by Lescot and
R\"ockner, who proved in \cite{Lesc_R_2004}, by employing Dirichlet
forms and Sazonov Theorem,  that if $d=1$, $\gamma=1$, and $Y$ is an
$\alpha$ stable process, $\alpha\in (0,2)$, then $X$ takes values in
the space $L^2(0,1)$, see also comments in subsection
\ref{subsec-LR}. In fact  a special case of our results reads as
follows, see Corollaries \ref{cor-Y^alpha_ii} and
\ref{cor-Y^alpha_i}.
\begin{theorem}\label{thm-intro} Suppose that $\alpha \in (0,2]$, $\gamma\in (0,2]$ and $\delta\in [0,\frac\gamma{\alpha\vee 1}-\frac{d}2)$.
Then the mild solution $u$ to problem \eqref{eqn_langevin_Delta''} is such that  for all $t\geq 0 $, with probability $1$,  $X(t)\in C_0^\delta(\mathcal{O})$.
\end{theorem}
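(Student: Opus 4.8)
The plan is to deduce the statement from the general regularity criterion of Theorem 4.1, applied to the concrete data $S(\sigma)=e^{-\sigma(-\Delta)^\gamma}$, $H=L^2(\mathcal{O})$, $E=C_0^\delta(\mathcal{O})$, and the subordinator $Z$ of stability index $\alpha/2$. I would first write the mild solution with $x=0$ as the stochastic convolution $X(t)=\int_0^t S(t-s)\,dY(s)$ with $Y=W\circ Z$, and isolate from Theorem 4.1 the scalar condition that, for fixed $t$, forces $X(t)\in E$ almost surely, namely
\[
\int_0^t \Phi(\sigma)^{\,\alpha\vee 1}\,d\sigma<\infty,\qquad \Phi(\sigma):=\mathbb{E}\,\|S(\sigma)G\|_E^2 ,
\]
where $G$ is a standard cylindrical Gaussian on $H$, so that $\Phi(\sigma)$ is the mean-square $E$-norm of the image under $S(\sigma)$ of a single Gaussian jump of $Y$. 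Everything then reduces to a sharp small-$\sigma$ estimate of $\Phi$ together with an accounting of the exponent $\alpha\vee 1$.

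The core computation is the small-time behaviour of $\Phi$. Writing $\{e_n\}$, $\{\lambda_n\}$ for the Dirichlet eigenbasis of $-\Delta$ on $\mathcal{O}$, so that $S(\sigma)e_n=e^{-\sigma\lambda_n^\gamma}e_n$ and $\lambda_n\sim n^{2/d}$ by Weyl's law, I would control the $C_0^\delta$-norm through the Sobolev--Besov embedding $H^{d/2+\delta+\eps}\embed C_0^\delta$ together with the Hilbert--Schmidt (hence radonifying) identity
\[
\|S(\sigma)\|_{\gamma(H,H^s)}^2=\sum_n e^{-2\sigma\lambda_n^\gamma}\lambda_n^{s}\sim \sigma^{-(s+d/2)/\gamma}\qquad(\sigma\to0).
\]
Taking $s=d/2+\delta+\eps$ and letting $\eps\downarrow0$ yields $\Phi(\sigma)\lesssim \sigma^{-(d+2\delta)/(2\gamma)}$; the matching lower bound, and the fact that for $\delta>0$ the H\"older seminorm dominates, follow from the on-diagonal and increment estimates for the fractional heat kernel $p_\sigma^\gamma$, giving $\Phi(\sigma)\sim \sigma^{-(d+2\delta)/(2\gamma)}$.

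Inserting this into the criterion turns the problem into the elementary integral
\[
\int_0^t \sigma^{-(d+2\delta)(\alpha\vee1)/(2\gamma)}\,d\sigma<\infty
\iff \frac{(d+2\delta)(\alpha\vee1)}{2\gamma}<1
\iff \delta<\frac{\gamma}{\alpha\vee1}-\frac d2 ,
\]
which is precisely the hypothesis; this is also where the two quoted corollaries split, the case $\alpha\le1$ giving the exponent $1$ and the case $\alpha>1$ the exponent $\alpha$. I expect the main obstacle to be justifying the exponent $\alpha\vee1$ rather than the scale-sharp value $\alpha/2$: since $E=C_0^\delta$ carries no nontrivial type, the infinitely many small Gaussian jumps cannot be summed by an $L^2$ (type-$2$) argument, and one must instead invoke the Banach-space stochastic-integral estimates of Section~3, compensating the small jumps when $\alpha>1$ (where $\int_0^1\sqrt r\,\nu(dr)=\infty$) and summing them absolutely when $\alpha\le1$ (where this integral converges). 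The remaining delicate point is the radonifying estimate into the non-Hilbertian target $C_0^\delta$, in particular controlling the $\eps$-loss in the embedding and the vanishing-trace behaviour encoded in the subscript $0$; both are absorbed into the fractional-heat-kernel bounds.
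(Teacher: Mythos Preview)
Your reduction has a genuine gap in the case $\alpha>1$. You correctly observe that $E=C_0^\delta(\mathcal{O})$ carries no nontrivial type, but then propose to ``invoke the Banach-space stochastic-integral estimates of Section~3''. Those estimates are precisely Theorems~\ref{lemma-2} and~\ref{lemma-3}: the first (for $p\le 1$) needs no type assumption, but the second (for $p\in(1,2]$, which is forced when $\alpha>1$ since $Z^{\alpha/2}\in\Sub(p)$ only for $p>\alpha$) \emph{requires} $E$ to be of type $p$. So you cannot run the compensated-jump argument directly in $C_0^\delta$. The paper's route is different: it never takes $E=C_0^\delta$ in Theorem~\ref{Theorem-1}. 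Instead it sets $E=H_0^{r,q}(\mathcal{O})$ and $U=H^{-s,q}(\mathcal{O})$ with $q\ge p$ (so $E$ \emph{is} of type $p$), applies Theorem~\ref{Theorem-1}(ii) with the operator-norm estimate $\vert S(\sigma)\vert_{L(U,E)}\lesssim\sigma^{-(r+s)/\gamma}$ of Proposition~\ref{prop_ass1_Delta}, and only \emph{afterwards} uses the Sobolev embedding $H_0^{r,q}\hookrightarrow C_0^\delta$ with $r>\delta+d/q$. The Sobolev embedding is thus used to choose the target space for the stochastic integral, not merely to estimate a Gaussian moment.

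Two further points. First, the scalar criterion you extract from Theorem~\ref{Theorem-1}, namely $\int_0^t\Phi(\sigma)^{\alpha\vee1}\,d\sigma<\infty$ with $\Phi(\sigma)=\mathbb{E}\|S(\sigma)G\|_E^2$, is not what that theorem says: the hypothesis there is $\int_0^T\vert S(\sigma)\vert_{L(U,E)}^p\,d\sigma<\infty$, an operator-norm condition involving the auxiliary space $U$, with $p>\alpha$. Second, your Hilbert--Schmidt computation through $H^{d/2+\delta+\eps,2}$ actually gives $\|S(\sigma)\|_{\gamma(H,H^s)}^2\sim\sigma^{-(d+\delta+\eps)/\gamma}$ (substitute $s=d/2+\delta+\eps$ into your own formula), not $\sigma^{-(d+2\delta)/(2\gamma)}$; the two errors happen to cancel in your final inequality. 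The reason the paper obtains the sharper threshold $\delta<\gamma/(\alpha\vee1)-d/2$ rather than $\gamma/(\alpha\vee1)-d$ is precisely that it works in $L^q$-based Sobolev spaces with $q$ large, so that the embedding into $C_0^\delta$ costs only $r>\delta+d/q\approx\delta$ instead of $r>\delta+d/2$.
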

The method of the proof is a generalization of a method from
\cite{Brzezniak_1997} (see also \cite[Appendix C]{Brz_Debbi_2007}
for details in the case $d=1$ and $\gamma<1$, where a purely
Gaussian case was considered). In that case, i.e. when in equation
\eqref{eqn_langevin_Delta''} we have purely Gaussian process, i.e.
an $L^2(0,1)$-cylindrical Wiener process, the mild solution $X$
takes values in the space $C_0^\delta(0,1)$, for
$\delta<\frac{\gamma}{2}-\frac12$. Note that here $d=1$ and
$\gamma=2$.

Some limits to the spatial regularity is the subject of Section 6.
 We consider the Langevin equation \eqref{eqn_langevin_Delta} with the
space $L^2(0,1)$  replaced by $C(\mathbb{S})$, where $\mathbb{S}$ is
the unit circle, and the operator $\Delta$ is replaced by the first
order operator $D=\frac{\partial}{\partial \sigma}$. We show that if
$Y = f L$ where $L$ is a real valued L{\'e}vy process with
trajectories of infinite variation then there exists $f\in
C(\mathbb{S})$ such that  the mild solution of the modified problem
\eqref{eqn_langevin_Delta} is not
$C(\mathbb{S})$-valued.  In this way we  generalize  a result by Dettweiler and van Neerven
\cite{Dettw_vN_2006} from the Gaussian to the L{\'e}vy case.\vspace{2mm}

In Section 7 we deal with the time regularity of the trajectories of
Ornstein-Uhlenbeck processes driven by L{\'e}vy white noise and show
that trajectories are not locally bounded but have some
integrability properties. In Section 8 we establish these properties
and apply them to  establish an existence result to Burgers equation
with L{\'e}vy white noise. We discuss only Burgers equation to fix
ideas, but we believe that other equations can be treated in a
similar way. The Burgers equation with L{\'e}vy type noise has been
considered in\cite{Truman}, \cite{Peszat_Z_2007} and recently in
\cite{Dong}. Our results are not directly comparable with that of
\cite{Truman} because we deal with different form of noise. The most
recent  paper \cite{Dong} considers compound Poisson noise and in
fact deals with deterministic Burgers equation on random intervals.
A possibility to generalize the results to non-autonomous equations
is discussed in the final Section 9.

\section{Preliminaries}
\label{sec_prelim}
\subsection{Subordination}
All processes considered in this paper will be defined on a fixed
probability space $\mathfrak{P}=(\Omega,\mathcal{F},\mathbb{P})$.
The law of a Banach space valued random variable $\xi$ will be
denoted by $\mathcal{L}(\xi)$. A family $(\mu_t)_{t\geq 0}$ of a
probability measure on a Banach space $U$ is a \textit{convolution
semigroup} if it it satisfies the following properties:
 \begin{trivlist}
\item[(i)] $\mu_0=\delta_0$,
\item[(ii)] $\mu_t\to \mu_0$ weakly as $t\todown 0$,
\item[(iii)] $\mu_t\ast\mu_s=\mu_{t+s}$, $t,s\geq 0$.
\end{trivlist}

By  definition, a \textit{subordinator process}, or  shortly
\textit{subordinator}, is an increasing L{\'e}vy process, see
\cite[Definitions 1.6 and 21.4]{Sato_1999}. We will need, see
\cite[Theorem 21.5]{Sato_1999}, the following fundamental
characterization of subordinator processes.

\begin{theorem} \label{thm-LK} Suppose that $Z=(Z(t))_{t\geq 0}$ is a subordinator process defined on the  probability space \ps. Then
there exists a real number $\drift  \in \mathbb{R}_+$, a non-negative measure $\rho$ on $(\mathbb{R}_+,\mathcal{B}(\mathbb{R}_+))$ satisfying
\begin{equation}\label{eqn-rho}
\rho(\{0\})=0,\;\int_1^\infty\rho(d\xi)+\int_0^1\xi\rho(d\xi)<\infty,
\end{equation}
 such that
\begin{equation}\label{eqn-LK}
\mathbb{E}\left( e^{-rZ(t)}\right)=e^{-t\tilde \psi(r)},\; r\geq 0,\, t\geq 0,
\end{equation}
where
\begin{equation}\label{eqn-psi}
\tilde \psi(r)=\drift  r+\int_0^\infty (1-e^{-r\xi})\rho(d\xi),\; r\geq 0.
\end{equation}
The family  $(\mu_t)_{t\geq 0}$, where $\mu_t$ is the laws of $Z(t)$, $t\geq 0$, forms a
convolution semigroup on $\mathbb{R}$.
\end{theorem}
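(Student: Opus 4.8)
The plan is to exploit three structural features of the subordinator $Z$: that it has stationary and independent increments, that it is stochastically continuous with $Z(0)=0$, and that its paths are nondecreasing. First I would fix $r\geq 0$ and set $\phi_r(t)=\mathbb{E}\left(e^{-rZ(t)}\right)$, which is well defined and lies in $(0,1]$ because $Z(t)\geq 0$ and $e^{-rZ(t)}$ is strictly positive. Stationarity and independence of increments give the multiplicative identity $\phi_r(t+s)=\phi_r(t)\phi_r(s)$, while stochastic continuity together with dominated convergence makes $t\mapsto\phi_r(t)$ continuous. The only continuous positive solutions of this Cauchy functional equation are $\phi_r(t)=e^{-t\tilde\psi(r)}$ with $\tilde\psi(r):=-\log\phi_r(1)\in[0,\infty)$. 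This establishes (\ref{eqn-LK}) with some nonnegative exponent $\tilde\psi$ and reduces the theorem to identifying the precise form (\ref{eqn-psi}).

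To obtain this form I would appeal to the L\'evy--It\^o decomposition of $Z$. Each $Z(t)$ is infinitely divisible, being the sum of the $n$ i.i.d.\ increments over a partition of $[0,t]$ into equal subintervals. Because the paths are nondecreasing they have finite variation and cannot carry a Brownian component, so the decomposition reduces to a deterministic drift plus a pure-jump part: the jumps of $Z$ define a Poisson random measure $N$ on $(0,\infty)\times(0,\infty)$ whose intensity is the product measure $ds\,\rho(d\xi)$ for a nonnegative $\rho$ on $(0,\infty)$, and one has
\begin{equation*}
Z(t)=\drift t+\int_{(0,t]\times(0,\infty)}\xi\,N(ds,d\xi),\qquad t\geq 0,
\end{equation*}
with $\drift\geq 0$. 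Applying the exponential (Campbell) formula for Poisson random measures to the jump integral, and using $\mathbb{E}\left(e^{-r\drift t}\right)=e^{-r\drift t}$ for the drift, yields exactly $\tilde\psi(r)=\drift r+\int_0^\infty(1-e^{-r\xi})\rho(d\xi)$, which is (\ref{eqn-psi}).

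The integrability conditions (\ref{eqn-rho}) then come from monotonicity of the paths. Since $Z$ is nondecreasing with $Z(0)=0$, the sum of its jumps up to time $t$ is bounded by $Z(t)<\infty$ almost surely; combined with the intensity of $N$ this forces $\int_{(0,t]}\!\int_{(0,\infty)}(\xi\wedge 1)\,\rho(d\xi)\,ds<\infty$, which gives both $\int_1^\infty\rho(d\xi)<\infty$ (finitely many large jumps on a compact time interval) and $\int_0^1\xi\,\rho(d\xi)<\infty$ (absolute summability of the small jumps), while $\rho(\{0\})=0$ merely records that a null jump is not a jump. Finally the convolution-semigroup assertion is immediate: $\mu_0=\mathcal{L}(Z(0))=\delta_0$; the identity $\mu_t\ast\mu_s=\mu_{t+s}$ follows because $Z(t+s)=Z(s)+\big(Z(t+s)-Z(s)\big)$ is a sum of the independent variables $Z(s)\sim\mu_s$ and $Z(t+s)-Z(s)\sim\mu_t$; and $\mu_t\to\delta_0$ weakly as $t\todown 0$ is just stochastic continuity at the origin.

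I expect the main obstacle to be the rigorous construction of the L\'evy measure $\rho$ and the exclusion of a Gaussian component, i.e.\ establishing the L\'evy--It\^o decomposition in the increasing case. One must verify that the jump point process is genuinely Poisson with a product intensity, and that the compensated small-jump integral needs no compensation here \emph{precisely because} monotonicity upgrades the generic square-summability of jumps to absolute summability $\int_0^1\xi\,\rho(d\xi)<\infty$. Once that decomposition is in hand, the exponential formula and the integrability bookkeeping are routine.
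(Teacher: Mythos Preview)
Your argument is correct and follows the standard route to the L\'evy--Khintchine representation for subordinators: the Cauchy functional equation for the Laplace transform, the L\'evy--It\^o decomposition with no Gaussian part (forced by monotonicity/finite variation), Campbell's formula, and the upgrade of square-summability of small jumps to absolute summability. The identification of the main obstacle is also accurate.

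However, there is nothing to compare against: the paper does not supply its own proof of this theorem. It is stated as a known characterization and attributed to \cite[Theorem 21.5]{Sato_1999}; no argument is given in the text. Your proposal is essentially a sketch of the proof one finds in Sato (or in Bertoin's or Kyprianou's treatments), so in that sense you are reproducing the cited reference rather than diverging from or matching anything original to the paper.
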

The measure $\rho$, resp. the number $\drift $,  see discussions
preceding  \cite[Def. 11.9]{Sato_1999}, is called the intensity
measure, resp. the drift,  of the subordinator process $Z$.

\begin{definition}\label{example_2.1}
\begin{trivlist}
\item[(i)] For $p>0$ we will denote by $\Sub(p)$ the set consisting of all subordinator processes  $Z$ whose intensity measure $\rho$  satisfies
\begin{eqnarray}
\label{cond-2}
\int_0^1 \xi^{\frac{p}2}\rho(d\xi)&<&\infty.
\end{eqnarray}
It is obvious that if $0<p_1<p_2<2\leq p_3$, then $$\Sub(p_1) \varsubsetneq \Sub(p_2) \varsubsetneq \Sub(2) =\Sub(p_3).$$
\item[(ii)] Note that if $\beta \in (0,1)$ and   the measure $\rho$ is defined by
\begin{equation}\label{eqn-rho2}
\rho(d\xi)=\frac1{\beta\Gamma(1-\beta)\xi^{1+\beta}}1_{(0,\infty)}(\xi)\,d\xi,
\end{equation}
where $\Gamma$ is the Euler-gamma function  ($\Gamma(z)=\int_0^\infty t^{z-1}e^{-t}\,dt$, $\Re z>0$), then  the measure $\rho$
 satisfies  condition \eqref{eqn-rho}. If  $\drift =0$ and  $\beta \in (0,1)$,
then    $\tilde\psi=\tilde\psi_{\beta}=r^\beta$, $r\geq 0$. A
subordinator process corresponding to $\drift =0$ and the measure
$\rho$ defined by \eqref{eqn-rho2}    will usually be denoted by
$Z^\beta$. 
\end{trivlist}
\end{definition}

Assume now that   $(\zeta_s)_{s\geq 0}$ is a convolution semigroup
on a Banach space $U$ and $(\mu_s)_{s\geq 0}$ a convolution
semigroup on $\mathbb{R}_+$ corresponding to a subordinator. We define then the
subordinated laws on $U$ by
\begin{equation}\label{eqn-sub_law}
\tilde \zeta_t:= \int _0^\infty \zeta_s\,\mu_t(ds),\; t\geq 0.
\end{equation}

\noindent It is easy to check, that the subordinated laws form a
convolution semigroup as well:
$$
\tilde \zeta_{t+s}=\tilde \zeta_t\ast\tilde \zeta_s,\,\,\,t,s\geq 0.
$$
If $W$ and $Z$ are independent L{\'e}vy processes, with the
corresponding convolution semigroups $(\zeta_s)_{s\geq 0}$ and
$(\mu_s)_{s\geq 0}$, then the subordinated laws correspond to the
L{\'e}vy process $Y=(Y(t))_{t\geq 0}$ defined by \eqref{eqn_subordination}, i.e.
\begin{equation}\label{eqn-def-Y}
Y(t):=W(Z(t)),\; t\geq 0.
\end{equation}
We are interested in a special case of the general situation described in  the following result.
\begin{theorem}\label{thm-exist-levy}
Suppose that $H$ is a separable Hilbert space and $U$ is a separable
Banach space such that $H\subset U$ continuously and densely. Assume
that $Z$ is a   subordinator  process with  the  intensity measure $\rho$ and the
drift $\drift $. Assume that $W=(W(t))_{t\geq 0}$ is an $U$-valued Wiener process   with the RKHS of $W(1)$  equal to $H$ and let $\zeta_s=\mathcal{L}(W(s))$.
\begin{trivlist}
\item[\;\;i)]If the process $Y$ is defined by (\ref{eqn-def-Y}) then,
\begin{equation}\label{eqn-Y}
\mathbb{E}(e^{i \lb Y(t) ,\phi \rb_{U,U^\ast}})=e^{-t\tilde{\lambda}(\phi)},\; \phi\in U^\ast, \; t\geq 0 .
\end{equation}
where,  with   $\tilde{\psi}$ defined by \eqref{eqn-psi},
 \begin{equation}\label{eqn-tilde-lambda}
 \tilde{\lambda}(\phi) =\tilde{\psi}(\frac12 \vert \phi\vert_H^2), \phi\in H.
\end{equation}
\item[\;\;ii)] Moreover $Y$ is a $U$-valued L{\'e}vy process such that
\begin{equation}\label{eqn-Y-cf}
\mathbb{E}(e^{i\lb Y(t) ,\phi \rb})=e^{-t\, {\rm PV}\int_U\left(1-e^{i\lb u,\phi\rb}\right)\nu(du)},\; \phi\in U^\ast, \; t\geq 0,
\end{equation}
where the measure  $\nu$  is given by formula
\begin{equation}\label{eqn-intensity-measure}
\nu(\Gamma)=\int_0^\infty \zeta_s(\Gamma)\rho(ds),\; \Gamma\in \mathcal{B}(U),
\end{equation}
and
$${\rm PV}\int_U\left(1-e^{i\lb
u,\phi\rb}\right)\nu(du):=\lim_{\eps\todown 0} \int_{u\in U:|u|\geq
\eps} \left(1-e^{i\lb u,\phi\rb}\right)\nu(du).
$$
\item[\;\;iii)] The process $Y$ is of finite variation iff $$\int_0^1
\big[\int_{B_U(0,1)}|u|_U\,\zeta_s(du) \big]\rho(ds)<\infty.$$
\end{trivlist}
The process $Y$ will be called an $H$-cylindrical L{\'e}vy process subordinated by a (subordinator) process $Z$.
\end{theorem}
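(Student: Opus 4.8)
The plan is to reduce everything to the explicit Gaussian characteristic functional of $W$ together with the Laplace transform of the subordinator. Because $H\embed U$ is $\gamma$-radonifying, each $W(s)$ is a genuine $U$-valued centered Gaussian variable with law $\zeta_s$, and since the RKHS of $W(1)$ is $H$ one has, viewing $\phi\in U^\ast$ as an element of $H$ via $U^\ast\embed H^\ast\cong H$,
\[
\int_U e^{i\lb u,\phi\rb}\,\zeta_s(du)=e^{-\frac{s}2|\phi|_H^2},\qquad \phi\in U^\ast,\ s\ge 0.
\]
For (i) I would fix $\phi$, condition on $Z$ (independent of $W$), and compute
\[
\mathbb{E}\,e^{i\lb Y(t),\phi\rb}=\mathbb{E}\big[e^{-\frac{Z(t)}2|\phi|_H^2}\big]=e^{-t\tilde\psi(\frac12|\phi|_H^2)},
\]
the last step being \eqref{eqn-LK} with $r=\frac12|\phi|_H^2$; this is precisely \eqref{eqn-Y}--\eqref{eqn-tilde-lambda}, and the interchange of expectations is harmless since the integrand is bounded by $1$.

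That $Y$ is a $U$-valued L\'evy process is already contained in the preceding discussion: $(\tilde\zeta_t)_{t\ge0}$ is a convolution semigroup realised by $Y(t)=W(Z(t))$, so $Y$ has stationary independent increments and a c\`adl\`ag modification. To identify its characteristic exponent for (ii), I would expand $\tilde\psi$ by \eqref{eqn-psi} at $r=\frac12|\phi|_H^2$ and insert the Gaussian identity above; since each $\zeta_s$ is a probability measure this gives
\[
\tilde\psi\big(\tfrac12|\phi|_H^2\big)=\frac{\drift}2|\phi|_H^2+\int_0^\infty\!\int_U\big(1-e^{i\lb u,\phi\rb}\big)\,\zeta_s(du)\,\rho(ds).
\]
A Fubini argument then rewrites the double integral as $\PV\int_U(1-e^{i\lb u,\phi\rb})\,\nu(du)$ with $\nu$ as in \eqref{eqn-intensity-measure}, yielding \eqref{eqn-Y-cf} when $\drift=0$ (a nonzero drift simply adds the Gaussian term $\frac{\drift}2|\phi|_H^2$).

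The crux of (ii), and the step I expect to be the main obstacle, is justifying this Fubini/principal-value manipulation, since $\nu$ may be infinite near the origin and the integrand only conditionally integrable. I would first check that $\nu$ is a genuine L\'evy measure, i.e. $\int_U(1\wedge|u|_U^2)\,\nu(du)<\infty$. Writing this as an iterated integral and splitting the $s$-integration at $1$, the range $s<1$ is controlled via $\int_U|u|_U^2\,\zeta_s(du)=sC$, with $C:=\mathbb{E}|W(1)|_U^2<\infty$ by Fernique's theorem, together with $\int_0^1 s\,\rho(ds)<\infty$ from \eqref{eqn-rho}, while the range $s>1$ is controlled by $\int_1^\infty\rho(ds)<\infty$. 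Symmetry of the Gaussian laws $\zeta_s$, hence of $\nu$, is what makes the principal value converge: the odd part $\sin\lb u,\phi\rb$ integrates to zero over each symmetric shell $\{|u|_U\ge\eps\}$, leaving $1-\cos\lb u,\phi\rb\le\frac12|\phi|_H^2\,|u|_U^2$, which is $\nu$-integrable.

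Finally, for (iii) I would invoke the standard criterion that a $U$-valued L\'evy process with no Gaussian part has finite variation iff $\int_{B_U(0,1)}|u|_U\,\nu(du)<\infty$. By Fubini this integral equals $\int_0^\infty\big[\int_{B_U(0,1)}|u|_U\,\zeta_s(du)\big]\rho(ds)$, and the contribution of $s>1$ is automatically finite because its inner integral is bounded by $\zeta_s(B_U(0,1))\le1$ while $\int_1^\infty\rho(ds)<\infty$ by \eqref{eqn-rho}. Hence finiteness of the whole quantity is equivalent to finiteness of the stated integral $\int_0^1\big[\int_{B_U(0,1)}|u|_U\,\zeta_s(du)\big]\rho(ds)$, which is the desired characterisation.
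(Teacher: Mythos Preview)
Your proposal is correct and follows essentially the same route as the paper: conditioning on $Z$ and using the Gaussian characteristic functional together with the subordinator's Laplace transform for (i), rewriting $\tilde\psi(\tfrac12|\phi|_H^2)$ via the definition of $\nu$ for (ii), and reducing (iii) to the standard finite-variation criterion by bounding the $s>1$ contribution using $\int_{B_U(0,1)}|u|_U\,\zeta_s(du)\le 1$ and $\int_1^\infty\rho(ds)<\infty$. If anything, you are more careful than the paper in justifying the Fubini/principal-value step in (ii) (verifying that $\nu$ is a L\'evy measure and exploiting the symmetry of the Gaussian laws), which the paper passes over in a single chain of equalities.
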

In the identity \eqref{eqn-tilde-lambda},  $\vert \phi\vert_H$
denotes the norm of $\phi$ regarded as a functional on $H$ while in
the identity  \eqref{eqn-Y}, the bracket
$\lb\cdot,\cdot\rb_{U,U^\ast}$ is the duality pairing between $U$
and $U^\ast$. Note that the RHS of the equality \eqref{eqn-Y} makes
sense since we identify $H$ with its dual $H^\prime$ so that
$U^\ast\subset H^\prime \equiv H\subset U$. Several aspects of
Theorem \ref{thm-exist-levy} can be found in \cite{Sato_1999}, where
a finite dimensional case is studied, and \cite{Linde_1986}, but we
provide a sketch of a proof for the readers convenience.
\begin{proof}[Proof of Theorem \ref{thm-exist-levy}]  First let us observe that the process $Y$ is a well defined  $U$-valued \cadlag process.
  For simplicity we will assume  that the process $W$, resp. $Z$,   is defined on a probability space $(\Omega_1,\mathcal{F}_1,\mathbb{P}_1)$, resp.  $(\Omega_2,\mathcal{F}_2,\mathbb{P}_2)$ and $\Omega=\Omega_1\times\Omega_2$, $\mathcal{F}=\mathcal{F}_1\otimes\mathcal{F}_2$ and $\mathbb{P}=\mathbb{P}_1\otimes\mathbb{P}_2$. Then, for any $\phi\in H$, we have the following sequence of equalities
\begin{eqnarray*}
\mathbb{E}(e^{i\lb Y(t) ,\phi \rb}_{U,U^\ast}) &=&  \mathbb{E}_1\mathbb{E}_2 (e^{i\lb W(Z(t,\omega_2),\omega_1)  ,\phi \rb}_{U,U^\ast})\\
=\mathbb{E}_2 \left[ \mathbb{E}_1 (e^{i\lb W(Z(t,\omega_2),\omega_1)
,\phi \rb}_{U,U^\ast})\right] &=& \mathbb{E}_2  e^{- Z(t,\omega_2)
\frac12 \vert \phi\vert^2}= e^{- t \tilde \psi(   \frac12 \vert
\phi\vert^2)} = e^{- t \tilde \lambda(  \phi)}.
\end{eqnarray*}
This concludes the proof  of  the identity \eqref{eqn-Y}.

In the first step in proving the identity
\eqref{eqn-intensity-measure} we shall prove that the measure $\nu$
defined by the RHS of equality \eqref{eqn-Y} satisfies identity
\eqref{eqn-Y-cf}. By the formulae
(\ref{eqn-Y}-\ref{eqn-tilde-lambda}) it is enough to show that
$\phi\in U^\ast$, $\tilde{\psi}(|\phi|_H^2)=\int_U\left(1-e^{i\lb
u,\phi\rb}\right)\nu(du)$. We have, by \eqref{eqn-Y} and
\eqref{eqn-psi}, where for simplicity we assume first that $\drift
=0$,
\begin{eqnarray*}
\PV \int_U\left(1-e^{i\lb u,\phi\rb}\right)\nu(du) &=&  \int_0^\infty  \PV\int_U\left(1-e^{i\lb u,\phi\rb}\right)\zeta_s(du) \rho(ds)\\
&=& \int_0^\infty \left(1-e^{-\frac{s}2
|\phi|_H^2}\right)\rho(ds)=\tilde{\psi}(\frac{1}2 |\phi|_H^2).
\end{eqnarray*}
This together with the identity \eqref{eqn-Y-cf} concludes the
proofs of the first two parts. To prove the final part let us
recall, see e.g.\cite{Peszat_Z_2007}, that  an $U$-valued L{\'e}vy
process $Y$ with intensity measure $\nu$ is of finite variation iff
$\int_{B_U(0,1)}|u|_U\,\nu(du)<\infty$. Let now $Y$ and $\nu$ be as
in Theorem \ref{thm-exist-levy}. Since then
$$\int_{B_U(0,1)}|u|_U\,\zeta_s(du)\leq
\int_{B_U(0,1)}\zeta_s(du)\leq \int_{U}\zeta_s(du)=1$$ and
    \begin{eqnarray*}&&\int_{B_U(0,1)}|u|_U\,\nu(du)=\int_0^\infty \big[\int_{B_U(0,1)}|u|_U\,\zeta_s(du) \big]\rho(ds)
    =\int_0^1 \big[\int_{B_U(0,1)}|u|_U\,\zeta_s(du) \big]\rho(ds)\\
    &+&\int_1^\infty \big[\int_{B_U(0,1)}|u|_U\,\zeta_s(du) \big]\rho(ds)
    \leq \int_0^1 \big[\int_{B_U(0,1)}|u|_U\,\zeta_s(du) \big]\rho(ds)+\int_1^\infty \rho(ds),\end{eqnarray*}
     in view of condition \eqref{eqn-rho} we infer that $Y$ is of finite variation iff $$\int_0^1 \big[\int_{B_U(0,1)}|u|_U\,\zeta_s(du) \big]\rho(ds)<\infty.$$
    In particular, if $H=U=\mathbb{R}$, $Y$ is of finite variation iff $\int_0^1 s^{1/2}\rho(ds)<\infty$.
\end{proof}

\begin{remarks}\label{rem-thm-exist-levy} 
\begin{trivlist}
\item[(1)] 
     Note   that  since $H$ is the RKHS of $W(1)$,   the embedding $H\embed U$ is $\gamma$-radonifying.
    \item[(2)] In view of the Fernique Theorem, $\zeta_s$ has finite second moment and moreover there exists $C>0$ such that for  all $s>0$, $\int_U|u|^2\zeta_s(du)\leq C s$.
\item[(3)]  Given a separable Hilbert space $H$ and a real number $p\in (0,\infty)$ we will denote by $\LSub(H,p)$, the class of all L{\'e}vy processes $Y$ of the form \eqref{eqn-def-Y}, where $W$ is $H$-cylindrical Wiener process and  $Z$ is an  independent subordinator process belonging to class $\Sub(p)$.
     \item[(4)] In the special case of the subordinator process $Z^{\frac\alpha2}$, with $\alpha \in (0,2)$, i.e. when $\rho$ is defined by formula \eqref{eqn-rho2} with $\beta=\frac\alpha2$, the process $Y$ constructed in Theorem \ref{thm-exist-levy} will be denoted by $Y^{\alpha}$ {and called the  $H$-cylindrical $\alpha$-stable process}. Note that in this case
 \begin{eqnarray*}\label{eqn-Y^alpha}
\mathbb{E}(e^{i\lb Y^\alpha(t) ,\phi \rb})&=&e^{-t{\lambda_\alpha}(\phi)},\; \phi\in U^\ast, \; t\geq 0,
\end{eqnarray*}
where  ${\lambda_\alpha}$  is defined by
 \begin{eqnarray*}
{\lambda_\alpha}(\phi) &=& (\frac12)^{\frac{\alpha}2} \vert \phi\vert_H^{\alpha}, \phi\in H.
\end{eqnarray*}
 \item[(5)] It follows from Definition \ref{example_2.1}(ii) that $Y^\alpha$ belongs to the class $\LSub(H,p)$ iff $\alpha<p$.
\item[(6)]
Let now the L{\'e}vy process $Y$ and its intensity measure $\nu$ be as in Theorem \ref{thm-exist-levy}. By   part (iii) of that result,  the process $Y$  is of finite variation iff $\int_{B_U(0,1)}|u|_U\,\nu(du)<\infty$.  Since then

\begin{eqnarray*}
\int_{B_U(0,1)}|u|_U\,\zeta_s(du)\leq \int_{B_U(0,1)}\zeta_s(du)\leq \int_{U}\zeta_s(du)=1
\end{eqnarray*}
 and
    \begin{eqnarray*}\int_{B_U(0,1)}|u|_U\,\nu(du)&=&\int_0^\infty \big[\int_{B_U(0,1)}|u|_U\,\zeta_s(du) \big]\rho(ds)
    \\=\int_0^1 \big[\int_{B_U(0,1)}|u|_U\,\zeta_s(du) \big]\rho(ds)
    &+&\int_1^\infty \big[\int_{B_U(0,1)}|u|_U\,\zeta_s(du) \big]\rho(ds)\\
    \leq \int_0^1 \big[\int_{B_U(0,1)}|u|_U\,\zeta_s(du) \big]\rho(ds)&+&\int_1^\infty \rho(ds),\end{eqnarray*}
     in view of condition \eqref{eqn-rho} we infer that $Y$ is of finite variation iff $$\int_0^1 \big[\int_{B_U(0,1)}|u|_U\,\zeta_s(du) \big]\rho(ds)<\infty.$$
    In particular, if $H=U=\mathbb{R}$, the process $Y$ is of finite variation iff $\int_0^1 s^{1/2}\rho(ds)<\infty$.\\
\item[(7)] It follows from part (5) of these remarks that if in addition $H=U=\mathbb{R}$, the process $Y^\alpha$ is of finite
variation in $U$ iff $\int_0^1 s^{1/2}s^{-1-\frac{\alpha}2}\,ds<\infty$, i.e. iff $\alpha \in (0,1)$.
 \end{trivlist}
\end{remarks}

\begin{remark}
 The jump times of the process $Y$ are the same as of the process $Z$.
\end{remark}

\subsection{Ornstein-Uhlenbeck processes}

The main object
of our studies are Ornstein-Uhlenbeck processes on a Banach space
$E$, driven by L{\'e}vy white noise $Y$, that is solutions of the
Langevin equation:
\begin{equation}
\label{eqn_langevin_01'} \left\{\begin{array}{rcl}
dX(t)&=&AX(t)+dY(t),\; t\geq 0,\\
X(0)&=&x\in E,
\end{array}\right.
\end{equation}
Here  $E$ is a  Banach space such that $E\subset H$, $A$ is  an
infinitesimal generator of a  $C_0$ semigroup on  $E$,   $x\in E$
and $Y=(Y(t))_{t\geq 0}$  a L{\'e}vy white noise defined by
\eqref{eqn-def-Y}. 

 By a mild solution to problem \eqref{eqn_langevin_01'} we
understand an adapted $E$-valued process $X=(X(t))_{t\geq 0}$, such
that for each $t\geq 0$, the following holds
\begin{equation}
\label{sol_mild_langevin_01'} X(t)=e^{tA}x+\int_0^t
e^{(t-s)A}dY(s),\; t\geq 0.
\end{equation}
As  in the theory od stochastic evolution equations driven by a
cylindrical Wiener process, see e.g. \cite{Brz_vN_2000} we can
define a notion of a weak solution. In the Hilbert space case the
following definition has  been proposed by \cite{Peszat_Z_2007}, see
Definition 9.11. A {\it weak  solution} of \eqref{eqn_langevin_01'}
is an adapted  $E-$valued stochastic process $\{X(t)\}_{t\in \geq
0}$ such that for all $x^\ast\in D(A^\ast)$ the function $s\mapsto
\lb X(s) , A^\ast x^\ast\rb$ is almost surely locally integrable and
\begin{equation}
\label{sol_weak_langevin_01'} \lb X(t),x^\ast\rb = \int^t_0 \lb
X(s), A^\ast x^\ast\rb \,ds + \lb Y(t),x^\ast \rb \qquad t\geq 0.
\end{equation}
Equivalence between various  notions of solutions  in well known,
see e.g. Proposition 4.2 in \cite{Brzezniak_1995} for the case of
strict and mild solutions of equations in  Banach spaces  driven by
a Wiener process and Theorem 9.15 in \cite{Peszat_Z_2007} for the
case of   weak and   mild solutions of   equations in Hilbert spaces
driven by a L{\'e}vy noise. In the current work we will not dwell on
this issue and we will only consider mild solutions.

The following result is a direct extension to Banach spaces
of a result in the Hilbert space case, due to Chojnowska-Michalik,
see \cite{Chojnowska_1987}.

\begin{theorem}\label{thm-ChM}
If $U$ is a Banach space, $A$ is an infinitesimal generator of a $C_0$-semigroup $S=(S(t))_{t\geq 0}$ on $U$  and $Y$ is a $U$-valued process satisfying
\begin{equation}\label{eqn_char_f}
\mathbb{E}(e^{i\lb Y(t) ,\phi \rb})=e^{-t\lambda(\phi)}, \; \phi\in U^\ast,
\end{equation}
then the solution $X$ to the following stochastic Langevin equation
\begin{equation}\label{eqn_lang}
\left\{\begin{array}{rl}
dX(t)&=A X(t)\,dt+B dY(t),\\
 X(0)&=0,
 \end{array}
 \right.
\end{equation}
satisfies
\begin{equation}\label{eqn_char_OU}
\mathbb{E}(e^{i\lb X(t) ,\phi \rb})=e^{-\int_0^t\lambda(B^\ast S^\ast(\sigma)\phi)\,d\sigma},\; \;\phi\in U^\ast.
\end{equation}
\end{theorem}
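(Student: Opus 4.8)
The plan is to read off the characteristic functional of the mild solution directly from its integral representation, using the independent and stationary increments of the L\'evy process $Y$. Since $X(0)=0$, formula \eqref{sol_mild_langevin_01'} gives
$$X(t)=\int_0^t S(t-s)B\,dY(s),\qquad t\geq 0,$$
where the integral of the deterministic operator-valued function $s\mapsto S(t-s)B$ is understood as a limit (in probability) of its step-process approximations. Thus the whole problem reduces to evaluating $\mathbb{E}(e^{i\lb X(t),\phi\rb})$ for a fixed $\phi\in U^\ast$ and $t>0$, and the hypothesis \eqref{eqn_char_f} will enter through the increments of $Y$.

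First I would fix a partition $0=t_0<t_1<\dots<t_n=t$ of $[0,t]$ and replace the integrand by the step function equal to $S(t-t_k)B$ on $[t_k,t_{k+1})$, approximating $X(t)$ by
$$X_n(t):=\sum_{k=0}^{n-1}S(t-t_k)B\,\big(Y(t_{k+1})-Y(t_k)\big).$$
Because $Y$ has independent increments and the operators $S(t-t_k)B$ are deterministic, the summands are independent, so the characteristic functional factorizes; moving the operators onto $\phi$ via $\lb S(t-t_k)Bu,\phi\rb=\lb u,B^\ast S^\ast(t-t_k)\phi\rb$ yields
$$\mathbb{E}\big(e^{i\lb X_n(t),\phi\rb}\big)=\prod_{k=0}^{n-1}\mathbb{E}\big(e^{i\lb Y(t_{k+1})-Y(t_k),\,B^\ast S^\ast(t-t_k)\phi\rb}\big).$$
The stationarity of the increments together with \eqref{eqn_char_f} turns each factor into $\exp\{-(t_{k+1}-t_k)\,\lambda(B^\ast S^\ast(t-t_k)\phi)\}$, whence
$$\mathbb{E}\big(e^{i\lb X_n(t),\phi\rb}\big)=\exp\Big\{-\sum_{k=0}^{n-1}(t_{k+1}-t_k)\,\lambda\big(B^\ast S^\ast(t-t_k)\phi\big)\Big\}.$$

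It then remains to let the mesh of the partition tend to $0$. On one side $X_n(t)\to X(t)$ in probability, this being precisely the statement that the Riemann sums converge to the stochastic integral, so the left-hand sides converge to $\mathbb{E}(e^{i\lb X(t),\phi\rb})$. On the other side the exponent is a left-endpoint Riemann sum for $\int_0^t\lambda(B^\ast S^\ast(t-s)\phi)\,ds$, which after the substitution $\sigma=t-s$ equals $\int_0^t\lambda(B^\ast S^\ast(\sigma)\phi)\,d\sigma$; identifying the two limits gives exactly \eqref{eqn_char_OU}.

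The main obstacle is the justification of these two limits rather than the (routine) algebra of independent increments. One must ensure that $\sigma\mapsto\lambda(B^\ast S^\ast(\sigma)\phi)$ is Riemann integrable on $[0,t]$: boundedness follows from $\sup_{\sigma\in[0,t]}\|S(\sigma)\|<\infty$ for a $C_0$-semigroup (so the set $\{B^\ast S^\ast(\sigma)\phi:\sigma\in[0,t]\}$ is norm-bounded) together with local boundedness of $\lambda$, which in the cases of interest is quantitative since $\lambda(\phi)=\tilde\psi(\tfrac12|\phi|_H^2)$ grows at most quadratically by \eqref{eqn-psi}--\eqref{eqn-tilde-lambda}; continuity (almost everywhere) follows from the weak$^\ast$ continuity of $\sigma\mapsto S^\ast(\sigma)\phi$ inherited from the strong continuity of $S$, and from the continuity of $\lambda$, which is forced by the hypothesis since $e^{-t\lambda(\phi)}$ is a nowhere-vanishing characteristic function. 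Simultaneously one must know that the step-process integrals $X_n(t)$ genuinely approximate $\int_0^t S(t-s)B\,dY(s)$; this is where the stochastic integration theory for deterministic (operator-valued) integrands against the noise $Y$, developed in the next section, is invoked, and it is the only genuinely analytic ingredient.
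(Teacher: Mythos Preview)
Your proof is correct and follows essentially the same approach as the paper: approximate $X(t)=\int_0^t S(t-s)B\,dY(s)$ by Riemann sums, factorize the characteristic functional via independent stationary increments and \eqref{eqn_char_f}, and pass to the limit. In fact you are more careful than the paper, which records only the chain of equalities and does not pause to justify the Riemann integrability of $\sigma\mapsto\lambda(B^\ast S^\ast(\sigma)\phi)$ or the convergence $X_n(t)\to X(t)$.
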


 \begin{remark}
Since $X(t)=\int_0^tS(t-\sigma)BdY(\sigma)$, equality \eqref{eqn_char_OU} can be equivalently rewritten in terms of the measure $\mu_t=\mathcal{L}(X_t)$ as follows
\begin{equation}
\label{eqn_char_OU2}
\hat{\mu_t}(\phi)=e^{-\int_0^t\lambda(B^\ast S^\ast(\sigma)\phi)\,d\sigma}, \;\phi\in U^\ast.
\end{equation}
\end{remark}

\begin{proof}[Proof of Theorem \ref{thm-ChM}] Denote $\Psi(\sigma):=S(t-\sigma)B$.
By  the assumption \eqref{eqn_char_f} and  using approximations of Riemann sums we have

\begin{eqnarray*}
\mathbb{E}e^{\lb \xi,\int_0^t\Psi(\sigma)\,dY(\sigma)\rb} &=& \lim_{N} \mathbb{E}e^{\lb \xi,\sum_k \Psi(\sigma_k)(Y(\sigma_{k+1})-Y(\sigma_{k}))\rb}
\\
=\lim_{N} \prod_k \mathbb{E}e^{\lb \Psi(\sigma_k)^\ast\xi, (Y(\sigma_{k+1})-Y(\sigma_{k}))\rb} &=&\lim_{N} \prod_k
e^{- (\sigma_{k+1}-\sigma_{k})\lambda(\Psi(\sigma_k)^\ast\xi) }\\
= e^{- \lim_{N} \sum_k(\sigma_{k+1}-\sigma_{k})\lambda(\Psi(\sigma_k)^\ast\xi) } &=& e^{-\int_0^t\lambda( \Psi^\ast(\sigma)\phi)\,d\sigma}.
\end{eqnarray*}
\end{proof}

\section{Integrals with respect to Poisson Random measures}\label{sec:integral}

We will need several results on stochastic
integration of Banach space valued functions with respect to Poisson
random measure.  Suppose that $\pi$ is a Poisson random measure on a
measure space $(G,\mathcal{G})$ with intensity $\nu_\pi$ over the
probability space \ps. Suppose also that $E$ is a separable Banach
space.
\begin{theorem}\label{lemma-2}
  Suppose that $p\in (0,1]$. Then for an arbitrary strongly measurable  function $f: G\to E$, belonging to  $ L^p( G, \mathcal{G} ; \nu_\pi;E)$, the following inequality holds
 \begin{equation}\label{eqn-lemma2}
 \mathbb{E}\vert \int f(x)\pi(dx)\vert^p_E \leq \int \vert f(x)\vert^p\nu_\pi(dx).
 \end{equation}
  \end{theorem}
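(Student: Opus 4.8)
The plan is to base the estimate on two elementary facts that are special to the range $p\in(0,1]$: the subadditivity of the map $t\mapsto t^p$ on $[0,\infty)$, which gives $\big(\sum_i a_i\big)^p\le\sum_i a_i^p$ for any countable family of nonnegative numbers $a_i$, and the Campbell formula $\mathbb{E}\int_G g\,d\pi=\int_G g\,d\nu_\pi$, valid for every nonnegative $\mathcal{G}$-measurable $g$. Since a Poisson random measure is almost surely a counting measure, $\pi=\sum_i\delta_{x_i}$ with the atoms $x_i$ forming a configuration that is finite on sets of finite intensity, the integral of a deterministic integrand is just the pathwise sum $\int_G f\,d\pi=\sum_i f(x_i)$, and the whole inequality becomes a statement about this sum. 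The point is that for $p\le 1$ no square-function or BDG machinery is needed: subadditivity does all the work.

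The core computation is pathwise. Using the triangle inequality in $E$ followed by subadditivity of $t\mapsto t^p$, I would write, for each fixed $\omega$,
\begin{equation*}
\Big|\int_G f\,d\pi\Big|_E^p=\Big|\sum_i f(x_i)\Big|_E^p\le\Big(\sum_i|f(x_i)|_E\Big)^p\le\sum_i|f(x_i)|_E^p=\int_G|f|_E^p\,d\pi.
\end{equation*}
Taking expectations and invoking the Campbell formula applied to the nonnegative function $|f|_E^p$ then yields
\begin{equation*}
\mathbb{E}\Big|\int_G f\,d\pi\Big|_E^p\le\mathbb{E}\int_G|f|_E^p\,d\pi=\int_G|f|_E^p\,d\nu_\pi,
\end{equation*}
which is exactly \eqref{eqn-lemma2}. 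The same chain, applied to $|f|_E$ in place of $f$, gives $\mathbb{E}\big(\sum_i|f(x_i)|_E\big)^p\le\int_G|f|_E^p\,d\nu_\pi<\infty$; hence $\sum_i|f(x_i)|_E<\infty$ almost surely, so the series $\sum_i f(x_i)$ converges absolutely and $\int_G f\,d\pi$ is genuinely well defined, even when $f\notin L^1(\nu_\pi)$.

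To make the manipulation of an infinite sum rigorous, and to identify the stochastic integral in the statement with the pathwise series, I would first run the argument for simple integrands $f=\sum_{j=1}^n e_j\,1_{A_j}$ with the $A_j$ disjoint and $\nu_\pi(A_j)<\infty$: for such $f$ only finitely many atoms fall in the support, the sums are finite, and the two displays above hold with no convergence issue. I would then extend to general strongly measurable $f\in L^p(G,\mathcal{G};\nu_\pi;E)$ by density of simple functions: choosing simple $f_n\to f$ in $L^p(\nu_\pi)$ and applying the established estimate to the differences gives $\mathbb{E}\big|\int_G(f_n-f_m)\,d\pi\big|_E^p\le\int_G|f_n-f_m|_E^p\,d\nu_\pi\to 0$, so that $\big(\int_G f_n\,d\pi\big)_n$ is Cauchy in the complete metric space $L^p(\Omega;E)$ with $d(X,Y)=\mathbb{E}|X-Y|_E^p$. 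Its limit defines $\int_G f\,d\pi$, and since $X\mapsto\mathbb{E}|X|_E^p$ is continuous for this metric (using $\big||x|_E^p-|y|_E^p\big|\le|x-y|_E^p$), the inequality \eqref{eqn-lemma2} passes to the limit.

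The only genuine obstacle is the consistency of these two descriptions of the integral: one must verify that the $L^p$-limit does not depend on the approximating sequence and that it coincides with the pathwise series $\sum_i f(x_i)$ whenever the latter is absolutely convergent. Both facts follow from the contraction estimate \eqref{eqn-lemma2} itself together with the absolute-convergence bound derived above, so no idea beyond subadditivity and the Campbell formula is required. The residual verifications — measurability of $\omega\mapsto\sum_i f(x_i)$ and the Campbell identity for $|f|_E^p$ — are routine and I would not dwell on them.
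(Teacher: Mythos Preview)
Your proof is correct and follows essentially the same route as the paper: reduce to simple functions by density in $L^p(\nu_\pi;E)$, apply the subadditivity of $t\mapsto t^p$ for $p\in(0,1]$, and use $\mathbb{E}\,\pi(B)=\nu_\pi(B)$. The only cosmetic difference is that the paper groups atoms by the sets $B_i$ and invokes $n^p\le n$ for the integer-valued counts $\pi(B_i)$, whereas you apply subadditivity directly over the individual atoms and then cite the Campbell formula for $|f|_E^p$; these are two phrasings of the same computation.
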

\begin{proof}[Proof of Theorem \ref{lemma-2}] Let us fix $p\in (0,1]$. Since the space of finitely-valued functions  is dense in
$ L^p( G, \mathcal{G} ; \nu;E)$, see Lemma 1.1. in
\cite{DaPrato-Zab-1992},
we may suppose that $f=\sum_{i}f_i1_{ B_i}$ with
$f_i\in E$,  and $B_i\in\mathcal{G}$ is
  finite family of pair-wise disjoint sets such that  $\nu_\pi(B_i)<\infty$.  Then
$$
  \int_G f (x)\pi(dx)=\sum_{i} \pi({B_i }) f_i.$$
Moreover, as $\pi(B_i)$ takes values in $\mathbb{N}$,
$\mathbb{E}|\pi( B_i)|^p\leq \mathbb{E}|\pi( B_i)|$ . Since    the
random variables $\pi({B_i })$ are independent,   we infer that
\begin{eqnarray*}
 \mathbb{E}\vert \int_G \xi (x)\pi(dx)\vert_E^p&=&\mathbb{E}\big[\vert \sum_{i} \pi({B_i}) f_i \vert_E^p \big]
 \\
 &\leq&   \sum_{i} \vert f_i \vert_E^p \mathbb{E}|\pi( B_i)|^p \leq   \sum_{i} \vert f_i \vert_E^p \mathbb{E}|\pi( B_i)|
 \\&\leq &    \sum_{i} \vert f_i \vert_E^p  \nu_\pi(B_i)
 =  \mathbb{E}
\int_G \vert f(x)\vert_E^p \, \nu_\pi(dx).
 \end{eqnarray*}
 The proof is complete.
\end{proof}
 If $p\in (1,2]$, we will need that $E$ is o type $p$.  Let
us recall that a Banach space $E$ is is a type $p$, with $p\in
[1,2]$, iff there exists a constant $K_{p}(E)>0$ for any finite
sequence $\eps_1,\ldots,\eps_n \colon \Omega \to \{-1,1\}$ of
symmetric i.i.d. random variables and for any finite sequence
$x_1,\ldots,x_n$ of elements of $X$, the following inequality holds
\begin{equation}
 \mathbb{E} | \sum_{i=1}^n \eps_i x_i |^p \le
K_p(X) \sum_{i=1}^n | x_i |^p. \label{eqn-type_p}
\end{equation}
\begin{remark}
It is well known that all $L^q$ spaces with $q\geq p$ are type $p$
Banach spaces. Moreover, see e.g.
 Corollary A.6 in \cite{Brzezniak_1995}, the classical Sobolev $H^{s,q}$ and Besov-Slobodetski spaces $W^{s,q}$, with $q\geq p$,
 are also type $p$ Banach spaces.
\end{remark}
We have the the following extension of Lemma 7 from
\cite{Brz_H_2007p}.
\begin{theorem}\label{lemma-3}
 Suppose that  $p\in (1,2]$ and $E$ is a type $p$ Banach space.
 Then for an arbitrary  strongly measurable  function $f: G\to E$,  from $ L^p( G, \mathcal{G} ; \nu;E)$, the following inequality holds
 \begin{equation}\label{eqn-lemma2_b}
 \mathbb{E}\vert \int_G f(x)\tilde{\pi}(dx)\vert^p_E \leq 2^{2-p}  K_p(E) \int_G \vert f(x)\vert^p\nu_\pi(dx),
 \end{equation}
where $\tilde{\pi}$ is the compensated random measure defined by $\tilde{\pi}(B)=\pi(B)-\nu_\pi(B)$, $B\in\mathcal{G}$.
\end{theorem}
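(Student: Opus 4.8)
The plan is to mimic the structure of the proof of Theorem \ref{lemma-2}: prove \eqref{eqn-lemma2_b} first for finitely-valued integrands and then extend by density. Since finitely-valued functions are dense in $L^p(G,\mathcal{G};\nu_\pi;E)$, and since — once \eqref{eqn-lemma2_b} is available for simple integrands — applying that inequality to differences $f_n-f_m$ shows that $\int_G f_n\,d\tilde\pi$ is Cauchy in $L^p(\Omega;E)$, the simple-function estimate simultaneously defines $\int_G f\,d\tilde\pi$ for general $f$ and passes to the limit. Thus I may assume $f=\sum_{i=1}^n f_i\mathbf{1}_{B_i}$ with $f_i\in E$ and the $B_i\in\mathcal{G}$ pairwise disjoint of finite measure, so that $\int_G f\,d\tilde\pi=\sum_{i=1}^n f_i\,\xi_i$, where $\xi_i:=\tilde\pi(B_i)=\pi(B_i)-\nu_\pi(B_i)$ are independent and centered and $\pi(B_i)$ is Poisson with mean $\lambda_i:=\nu_\pi(B_i)$.

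The heart of the matter is then a $p$-th moment bound for a finite sum $\sum_i f_i\xi_i$ of independent centered $E$-valued variables, and this is exactly where the type $p$ assumption \eqref{eqn-type_p} is used. The natural device is symmetrization: taking independent copies $\xi_i'$ and independent Rademacher signs $\eps_i$, one writes $\sum_i f_i\xi_i=\mathbb{E}\big[\sum_i f_i(\xi_i-\xi_i')\mid(\xi_i)\big]$ and uses Jensen's inequality for the convex function $|\cdot|_E^p$ to get $\mathbb{E}|\sum_i f_i\xi_i|_E^p\le\mathbb{E}|\sum_i f_i(\xi_i-\xi_i')|_E^p$. Because $(\xi_i-\xi_i')_i$ is independent and symmetric, it is distributed as $(\eps_i(\xi_i-\xi_i'))_i$, so conditioning on $(\xi_i,\xi_i')$ and applying \eqref{eqn-type_p} to the vectors $f_i(\xi_i-\xi_i')$ yields $\mathbb{E}|\sum_i f_i\xi_i|_E^p\le K_p(E)\sum_i|f_i|_E^p\,\mathbb{E}|\xi_i-\xi_i'|^p$. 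Here $\xi_i-\xi_i'=N_i-N_i'$ with $N_i,N_i'$ i.i.d.\ Poisson$(\lambda_i)$, and since $N_i-N_i'$ is integer-valued and $p\le2$ one has $|N_i-N_i'|^p\le(N_i-N_i')^2$, hence $\mathbb{E}|\xi_i-\xi_i'|^p\le\mathbb{E}(N_i-N_i')^2=2\lambda_i$. This produces \eqref{eqn-lemma2_b} cleanly, but with the constant $2K_p(E)$.

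The main obstacle is that symmetrization overshoots the stated sharp constant $2^{2-p}K_p(E)$, which is strictly smaller since $2^{2-p}<2$ for $p>1$. The lost factor $2$ is intrinsic to passing from $\xi_i$ to the reflected $\xi_i-\xi_i'$: the Poisson increments are one-sided (the underlying counts are nonnegative), and the symmetrized variable $N_i-N_i'$ carries roughly twice the small-jump mass of $N_i-\lambda_i$, so no sharpening of the moment step can recover $2^{2-p}$ within a symmetrization scheme. To reach the stated constant I would instead estimate the centered increments directly, using the sharp scalar bound $\mathbb{E}|N-\lambda|^p\le 2^{2-p}\lambda$ for $N\sim$ Poisson$(\lambda)$ (an identity in the second-moment sense at $p=2$, where $2^{2-p}=1$ reproduces the Hilbert-space orthogonality), and feeding it into a non-symmetrizing application of the type $p$ property — for instance through the fine-partition representation $f_i\xi_i=\tfrac12 f_i(\sigma_i-\mathbb{E}\sigma_i)$ with biased signs $\sigma_i\in\{-1,1\}$ together with a biased-to-symmetric comparison, or through an inductive "one added jump" smoothness step whose per-jump cost is precisely $2^{2-p}$ and is tight in the type $2$ case. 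Executing this so that the one-sided structure is exploited without reintroducing the factor $2$ is the delicate point; the density reduction and the symmetrization machinery above are routine.
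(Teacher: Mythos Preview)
Your reduction to simple functions and your identification of the scalar bound $\mathbb{E}|N-\lambda|^p\le 2^{2-p}\lambda$ are both exactly what the paper uses. The gap is that you are trying to reach independent mean-zero sums through symmetrization and the Rademacher definition \eqref{eqn-type_p}, which forces you to pass through $\xi_i-\xi_i'$ and costs the extra factor $2^{p-1}$. Your speculative fixes (biased signs, inductive ``one added jump'' steps) are unnecessary detours.

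The missing ingredient is the Hoffmann-J{\o}rgensen--Pisier equivalent characterization of type~$p$: a Banach space $E$ is of type $p$ with constant $K_p(E)$ if and only if for every finite family $\eta_1,\ldots,\eta_n$ of independent $E$-valued mean-zero random variables with finite $p$-th moments one has
\[
\mathbb{E}\Big|\sum_{i=1}^n \eta_i\Big|_E^p \;\le\; K_p(E)\sum_{i=1}^n \mathbb{E}|\eta_i|_E^p.
\]
This is the form the paper quotes (from \cite{HJ_P_1976}) and applies directly to $\eta_i=f_i\,\tilde\pi(B_i)$, which are independent and centered. One line then gives
\[
\mathbb{E}\Big|\sum_i f_i\,\tilde\pi(B_i)\Big|_E^p \le K_p(E)\sum_i |f_i|_E^p\,\mathbb{E}|\tilde\pi(B_i)|^p
\le 2^{2-p}K_p(E)\sum_i |f_i|_E^p\,\nu_\pi(B_i),
\]
using the scalar centered Poisson bound you already identified. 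So the whole second half of your proposal collapses to a one-line citation: the ``non-symmetrizing application of the type~$p$ property'' you were groping for is exactly this classical equivalence, and no clever repair of the symmetrization route is needed.
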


\begin{proof}[Proof of Theorem \ref{lemma-3}]The following equivalent characterization of type $p$ Banach spaces holds, see   \cite[Theorem
2.1]{HJ_P_1976}:

A    Banach space $X$ is is of type $p$ iff  there exists $K_p(X)$
such that for any finite sequence $\xi_1,\ldots,\xi_n $ of
 independent $X$-valued random variables  mean zero with  finite $p$-th moments,
\begin{equation}
 \mathbb{E} | \sum_{i=1}^n \xi_i  |^p \le
K_p(X) \sum_{i=1}^n  \mathbb{E} | \xi_i |^p. \label{eqn-2.3}
\end{equation}
As in the proof of Theorem \ref{lemma-2}  we may assume that $f=\sum_{i}f_i1_{ B_i}$  with
$f_i\in E$,  and $B_i\in\mathcal{G}$ is
  finite family of pair-wise disjoint sets such that  $\nu_\pi(B_i)<\infty$.  Since then
$$
  \int_G f (x)\tilde{\pi}(dx)=\sum_{i} \tilde{\pi}({B_i }) f_i$$
and    the random variables $\tilde{\pi}({B_i })$ are independent
and of  mean $0$, by the just quoted characterization of the type
$p$ property of the Banach space $E$ we infer that
\begin{eqnarray*}
 \mathbb{E}\vert \int_G \xi (x)\tilde{\pi}(dx)\vert_E^p&=&\mathbb{E}\big[\vert \sum_{i} \tilde{\pi}({B_i}) f_i \vert_E^p \big]
 \\
 &\leq&  K_p(E)  \sum_{i} \vert f_i \vert_E^p \mathbb{E}|\tilde{\pi}( B_i)|^p \leq K_p(E) 2^{2-p} \sum_{i} \vert f_i \vert_E^p \mathbb{E}|\tilde{\pi}( B_i)|
 \\&\leq &  K_p(E) 2^{2-p} \sum_{i} \vert f_i \vert_E^p  \nu_{\pi}(B_i)
 = K_p(E) 2^{2-p} \mathbb{E}
\int_G \vert f(x)\vert_E^p \, \nu_\pi(dx).
 \end{eqnarray*}
In the above we have used a fact, see Lemma 6 in \cite{Brz_H_2007p},
that if $\xi \sim \text{ Poiss }(\lambda)$, with $\lambda >0$ and
$p\in (1,2]$,  $\mathbb{E}|\xi-\lambda|^p \leq 2^{2-p}\lambda$.  The
proof is complete.
\end{proof}

\section{Space regularity of the Ornstein-Uhlenbeck process}\label{sec:O-U}

We  consider a triple of spaces $E$, $H$ and $U$ such that the middle one is a Hilbert space and the other two are separable Banach spaces and

\begin{equation}
\label{eqn_triple}
E \subset H \subset U
\end{equation}

We consider an $U$-valued Wiener process $W=\big(W(t)\big)_{t\geq 0}$ (with RKHS $H$) and a subordinator process $Z=\big(Z(t)\big)_{t\geq 0}$.
As in Theorem \ref{thm-exist-levy} we define an $U$-valued  L{\'e}vy process $Y=\big(Y(t)\big)_{t\geq 0}$ by the subordination formula \eqref{eqn_subordination}, i.e. \begin{equation}
\label{eqn_subordination'} Y(t):=W(Z(t)),\; t\geq 0.
\end{equation}

We also consider a $C_0$ semigroup $\textsf{S}=\big(S(t)\big)_{t\geq 0}$ of bounded linear operators on $E$. Our aim is to study spatial regularity of  Ornstein-Uhlenbeck processes  driven by the L{\'e}vy process $Y$ with generator $A$.  In other words we intend to study spatial regularity  of a process $X$ which  is a
solution of  the  Langevin equation \eqref{eqn_langevin_01}, i.e. 
\begin{equation}
\label{eqn_langevin_01_b}
\left\{\begin{array}{rcl}
dX(t)&=&AX(t)\,dt +dY(t),\; t\geq 0,\\
X(0)&=&0.
\end{array}\right.
\end{equation}

However we begin with  the following preliminary result.
\begin{theorem}\label{Theorem-1} Assume  that $T>0$, $p\in (0,2]$
and $\Psi(t)$, $t\in [0,T]$, is a strongly measurable function with values in the space of all  linear and bounded
operators from $U$ to $E$  such that
\begin{eqnarray}\label{cond-1}
\int_0^T\vert \Psi(s)\vert^p_{L(U,E)}\,ds&<&\infty.
\end{eqnarray}
Assume that $Z$ is a subordinator process belonging to the class
$\Sub(p)$. Let $Y$ be the $U$-valued L{\'e}vy process defined by
formula (\ref{eqn_subordination'}). Then, with probability $1$, $\int_0^t\Psi
(s)\,dY(s)$ takes values in $E$, for all $t\in [0,T]$, if
\begin{trivlist}
\item[(i)]  $p\in (0,1],$
\item[(ii)] or   $p\in (1,2]$ and the Banach space  $E$ is of   type
$p$.

\end{trivlist}
\end{theorem}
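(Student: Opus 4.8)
The plan is to reduce the statement to the two Poisson-integral moment estimates of Section~\ref{sec:integral} by writing out the L\'evy--It\^o decomposition of the integrator $Y$. By Theorem~\ref{thm-exist-levy} the process $Y$ is a $U$-valued L\'evy process whose jumps are governed by a Poisson random measure $N(ds,du)$ on $[0,T]\times U$ with compensator $ds\otimes\nu(du)$, where $\nu(du)=\int_0^\infty\zeta_s(du)\,\rho(ds)$. Consequently $\int_0^t\Psi(s)\,dY(s)$ decomposes into the contribution of the drift $\drift$ of $Z$, which by \eqref{eqn-tilde-lambda} produces a Gaussian component $\sqrt{\drift}\,W$, and an integral against the jumps, which I would realise as the Poisson integral of the $E$-valued integrand $f(s,u):=\Psi(s)u$; this is genuinely $E$-valued since $\Psi(s)\in L(U,E)$. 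I would further split the jumps by their size in $U$ into those in $B_U(0,1)$ and those in its complement. The large jumps have finite intensity, because Chebyshev's inequality together with Remark~\ref{rem-thm-exist-levy}(2) gives $\zeta_s(B_U(0,1)^c)\le Cs$, whence $\nu(B_U(0,1)^c)\le C\int_0^1 s\,\rho(ds)+\int_1^\infty\rho(ds)<\infty$ by \eqref{eqn-rho}; hence on $[0,T]$ there are a.s.\ only finitely many of them and their contribution to the integral is a finite sum of $E$-valued terms $\Psi(\tau_i)\,\Delta Y(\tau_i)$, which lies in $E$.

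The heart of the argument is the verification of the hypothesis of Theorems~\ref{lemma-2} and \ref{lemma-3} for the small jumps, namely that $f\in L^p([0,T]\times B_U(0,1),\,ds\otimes\nu;E)$. Using $|\Psi(s)u|_E\le|\Psi(s)|_{L(U,E)}|u|_U$ and Fubini,
\[
\int_0^T\!\!\int_{B_U(0,1)}|\Psi(s)u|_E^p\,\nu(du)\,ds
\le \Big(\int_0^T|\Psi(s)|_{L(U,E)}^p\,ds\Big)\int_{B_U(0,1)}|u|_U^p\,\nu(du),
\]
and, splitting the $s$-integral at $1$,
\[
\int_{B_U(0,1)}|u|_U^p\,\nu(du)
\le \int_0^1\Big(\int_U|u|_U^p\,\zeta_s(du)\Big)\rho(ds)
+\int_1^\infty \zeta_s\big(B_U(0,1)\big)\,\rho(ds).
\]
Since $p/2\le1$, Jensen's inequality and Remark~\ref{rem-thm-exist-levy}(2) yield $\int_U|u|_U^p\,\zeta_s(du)\le(\int_U|u|_U^2\,\zeta_s(du))^{p/2}\le(Cs)^{p/2}$, so the first term is at most $C^{p/2}\int_0^1 s^{p/2}\rho(ds)$, which is finite \emph{precisely} because $Z\in\Sub(p)$, i.e.\ by \eqref{cond-2}; the second term is bounded by $\int_1^\infty\rho(ds)<\infty$ via \eqref{eqn-rho}. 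This is the only place the assumption $Z\in\Sub(p)$ enters, and extracting the correct power $s^{p/2}$ from the Gaussian laws $\zeta_s$ and matching it to \eqref{cond-2} is the main point of the proof.

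With this integrability in hand I would conclude as follows. If $p\in(0,1]$ then for $s\le1$ one has $s^{1/2}\le s^{p/2}$, so $\int_0^1 s^{1/2}\rho(ds)<\infty$ and the small-jump part of $Y$ is of finite variation (Theorem~\ref{thm-exist-levy}(iii)); hence the integral is the \emph{uncompensated} Poisson integral $\int\!\int\Psi(s)u\,N(ds,du)$, to which Theorem~\ref{lemma-2} applies and gives a value in $E$ with finite $p$-th moment. If $p\in(1,2]$ and $E$ is of type $p$, I would instead use the \emph{compensated} integral $\int\!\int\Psi(s)u\,\tilde N(ds,du)$ (legitimate since $\int_{B_U(0,1)}|u|_U^2\,\nu(du)<\infty$) and invoke Theorem~\ref{lemma-3}; compensation is forced here because the drift needed to centre the small jumps need not converge absolutely. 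To pass from a fixed $t$ to all $t\in[0,T]$ simultaneously I would dominate $\sup_{t\le T}$ of the $E$-norm, in the finite-variation case by the integrable total variation $\int_0^T\!\int_{B_U(0,1)}|\Psi(s)u|_E\,N(ds,du)$ and in the martingale case by Doob's maximal inequality. Finally, when $\drift>0$ the Gaussian component is handled by the classical Gaussian theory: $\Psi(s)|_H=\Psi(s)\circ(H\embed U)$ is $\gamma$-radonifying from $H$ to $E$, since $H\embed U$ is $\gamma$-radonifying by Remark~\ref{rem-thm-exist-levy}(1), so $\sqrt{\drift}\int_0^t\Psi(s)\,dW(s)$ takes values in $E$ as well.
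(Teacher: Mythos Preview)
Your argument follows the same route as the paper's: decompose $Y$ into large and small jumps, dispose of the large-jump part as a finite sum in $E$, and for the small jumps verify the $L^p$-hypothesis of Theorems~\ref{lemma-2} and~\ref{lemma-3} via the estimate $\int_{B_U(0,1)}|u|_U^p\,\nu(du)\le C\int_0^1 s^{p/2}\rho(ds)+\int_1^\infty\rho(ds)$, which the paper isolates as Lemma~\ref{lem-5} and proves in exactly your way (Jensen/Cauchy--Schwarz on the Gaussian second moment, then the $\Sub(p)$ condition).

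Two minor points of comparison. In the case $p\in(1,2]$ the paper observes that the compensator $\PV\int_0^t\int_{|u|<1}\Psi(s)u\,\nu(du)\,ds$ actually \emph{vanishes} by the symmetry of the centred Gaussian laws $\zeta_s$, so the compensated and uncompensated small-jump integrals coincide; your remark that ``compensation is forced'' is not incorrect in a generic L\'evy setting but misses this simplification here. Conversely, your treatment is more careful than the paper's in two respects: you explicitly handle the Gaussian component coming from the drift $\drift$ of the subordinator (the paper's proof tacitly writes $Y$ as a pure jump process), and you address the passage from ``for each fixed $t$'' to ``for all $t\in[0,T]$ simultaneously'' via maximal inequalities, whereas the paper only bounds $\mathbb{E}|X_1(t)|_E^p$ pointwise in $t$.
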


\begin{remark}\label{rem-Theorem-1}(i)
Let us notice that if the condition \eqref{cond-1} is satisfied and the embedding $i:H\embed U$ is $\gamma$-radonifying, then with $\Vert \cdot\Vert^p_{R(H,E)}$ denoting the $\gamma$-radonifying norm, see e.g. \cite{Brzezniak_1997},
\begin{eqnarray*}\label{cond-1'}
\int_0^T\Vert \Psi(s)\Vert^p_{R(H,E)}\,ds&<&\infty.
\end{eqnarray*}
The case when $E$ is a Hilbert space and $p=2$ is studied in \cite{Peszat_Z_2007}.\\
(ii) Let us notice that if the condition \eqref{cond-1} is satisfied and $p_2<p$, then it is also satisfied for $p_2$.\\
\end{remark}

\begin{proof}[Proof of Theorem \ref{Theorem-1}]
 The L{\'e}vy process  $Y$ can be  decomposed into two parts, the first one with small jumps and the second one with large jumps:
\begin{equation}\label{eqn-decom}
Y(t)=Y_1(t)+Y_2(t), \; t \geq 0
\end{equation}

The processes $Y_1$ and $Y_2$ are defined as follows.  Let  $\nu$ denotes the intensity measure corresponding to the process $Y$.
Then $Y_1$ is the L{\'e}vy process  corresponding  to the intensity measure $\nu_1$ defined by

\begin{equation}\label{eqn-nu1}
\nu_1(\Gamma):=\nu(\Gamma\cap B_U(0,1),
\end{equation}
 where $B_U(0,1)$ is the closed unit ball in $U$. We also  define $Y_2$ to be  the L{\'e}vy process  corresponding
 to the intensity measure $\nu_2=\nu-\nu_1$. Thus $Y_2$ is a compound Poisson process with the intensity measure
 $\nu_2$. The processes $Y_1$ and $Y_2$ can be easily constructed in
 terms of the  Poisson random measure $\pi$ associated with the process $Y$  defined by
\begin{equation}
\label{egn-jum-meas}
\pi([0,t]\times \Gamma):= \sum_{s\leq t} 1_{\Gamma}(\Delta Y(s)),\; \Gamma\in \mathcal{U},
\end{equation}
where $\Delta Y(s):=Y(s^+)-Y(s^-)$, $s\geq 0$. Note that we assume the process $Y$ to be right-continuous
with left hand limits, so that the definition of $\Delta Y$ makes sense and $Y(s^+)=Y(s)$.
It can be shown, see e.g. \cite[Prop. 15.5]{Kallenberg_2002} that $\pi$ is a time homogenous Poisson random measure and
 that $Y$ can be expresses in terms of the random measure $\pi$ as follows
$$
Y(t)=\sum_{s\leq t}\Delta Y(s)=\int_0^t\int_U u\pi(dy,ds)
$$

Then we define processes $Y_1$ and $Y_2$ by the following
modifications of the previous formula.
\begin{eqnarray}
\label{eqn-Y_2}
Y_2(t)&=&\sum_{s\leq t}1_{\{|\Delta Y(s)|\geq 1\}} \Delta Y(s)=\int_0^t\int_{|u|\geq 1} u\pi(dy,ds)\\
Y_1(t)&=&\sum_{s\leq t}1_{\{|\Delta Y(s)|< 1\}} \Delta Y(s)=\int_0^t\int_{|u|< 1} u\pi(dy,ds)
\label{eqn-Y_1}
\end{eqnarray}

Thus
\begin{equation}\label{eqn-Y1c}
\int_0^t \Psi(s)\,dY(s)= \int_0^t \Psi(s)\,dY_1(s)+
\int_0^t\Psi(s)\,dY_2(s),
\end{equation}
and we will check that the integrals with respect to $Y_1$ and $Y_2$
take values in $E$.

The process $Y_2$ has a discrete sequence of jump times:
$0<\tau_1<\tau_2<\tau_3<\cdots$ diverging to $+\infty$, and $\Delta
Y_2(\tau_k)=\Delta Y(\tau_k)$ for all $k$. Hence the integral with
respect to $Y_2$ is defined as
\begin{equation}
\label{int_Y_2} \int_0^t \Psi(s)dY_2(s):=\sum_{\tau_k\leq
t}\Psi(\tau_k)\Delta Y_2(\tau_k).
\end{equation}
Since the operators $\Psi$ take values in $E$  and the sum in \eqref{int_Y_2} is finite, it is clear that the
integral with respect to the process $Y_2$ takes values in $E$.

We pass now to the integral with respect to the process $Y_1$ and
assume that  $p\in (0, 1]$. Then,  by Lemma \ref{lemma-2},
 \begin{eqnarray*}
 \mathbb{E}
\vert \int_0^t\Psi(r)\, dY_1(r)  \vert_E^p &\leq&  \int_0^t\int_{B_U(0,1)} \vert \Psi(r)\vert_{\mathcal{L}(U,E)}^p  \vert u\vert_U^p \nu(du)\, dr\\
 &=& \int_0^t \vert \Psi(r)\vert_{\mathcal{L}(U,E)}^p \, dr \int_{B_U(0,1)}\vert u\vert_U^p \nu(du).
\end{eqnarray*}

It follows directly from formula \eqref{eqn-intensity-measure} in
Theorem \ref{thm-exist-levy} that for any $p\in (0,\infty)$,
 \begin{eqnarray*}
\int_{B_U(0,1)}\vert u\vert_U^p \nu(du)&=& \int_0^\infty \big[
\int_{B_U(0,1)} \vert u\vert_U^p \zeta_s(du) \big] \rho(ds)
\end{eqnarray*}
We have the following elementary lemma.

 \begin{lemma}\label{lem-5}
If $p\in (0, 2]$, then   or an arbitrary subordinator process $Z$ from the class $\Sub(p)$,
there exists a constant $C>0$ such that
 \begin{eqnarray}\label{s1}
 \int_1^\infty \big[ \int_{B_U(0,1)} \vert u\vert_U^p \zeta_s(du) \big] \rho(ds) &\leq& \int_1^\infty\, \rho(ds),
\\
\label{s2}
 \int_0^1  \int_{B_U(0,1)} \vert u\vert_U^p \zeta_s(du)  \rho(ds) &\leq&  C \int_0^1s^{\frac{p}2}\rho(ds),
\end{eqnarray}
where  $\nu$ is the  intensity measure of $Z$. In particular,
 \begin{eqnarray}
 \label{s3}
\int_{B_U(0,1)}\vert u\vert_U^p \nu(du)&\leq & \int_1^\infty\, \rho(ds)+C \int_0^1s^{\frac{p}2}\rho(ds).
\end{eqnarray}

\end{lemma}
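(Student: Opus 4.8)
The plan is to treat the two displayed inequalities \eqref{s1} and \eqref{s2} separately and then combine them for the final assertion \eqref{s3}. Throughout I would use two facts already available in the setup: that each $\zeta_s=\mathcal{L}(W(s))$ is a \emph{probability} measure on $U$, and the Fernique-type second moment bound from Remark \ref{rem-thm-exist-levy}(2), namely that there is a constant $C>0$ with $\int_U|u|_U^2\,\zeta_s(du)\leq Cs$ for every $s>0$.

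For \eqref{s1} I would simply note that on the unit ball $B_U(0,1)$ one has $|u|_U\leq 1$, hence $|u|_U^p\leq 1$ for every $p>0$. Since $\zeta_s$ is a probability measure,
\[
\int_{B_U(0,1)}|u|_U^p\,\zeta_s(du)\leq \zeta_s(B_U(0,1))\leq 1,
\]
and integrating this over $s\in[1,\infty)$ against $\rho$ gives \eqref{s1} immediately.

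The substantive step is \eqref{s2}, where the unit bound must be upgraded to the rate $s^{p/2}$ on the small-$s$ range. The idea is to interpolate between the total mass of $\zeta_s$ and its second moment. For $p\in(0,2)$ I would apply H\"older's inequality with conjugate exponents $2/p$ and $2/(2-p)$:
\[
\int_{B_U(0,1)}|u|_U^p\,\zeta_s(du)\leq\Big(\int_{B_U(0,1)}|u|_U^2\,\zeta_s(du)\Big)^{p/2}\big(\zeta_s(B_U(0,1))\big)^{(2-p)/2}\leq (Cs)^{p/2},
\]
using the second-moment estimate (applied on all of $U$, which dominates the ball integral) for the first factor and $\zeta_s(B_U(0,1))\leq 1$ for the second; for $p=2$ the same bound holds directly. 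Integrating over $s\in(0,1]$ against $\rho$ then yields \eqref{s2} with constant $C^{p/2}$.

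Finally, for \eqref{s3} I would invoke the representation \eqref{eqn-intensity-measure}, split
\[
\int_{B_U(0,1)}|u|_U^p\,\nu(du)=\int_0^\infty\Big[\int_{B_U(0,1)}|u|_U^p\,\zeta_s(du)\Big]\rho(ds)
\]
as $\int_0^1+\int_1^\infty$, and apply \eqref{s2} and \eqref{s1} to the two pieces. The only point requiring care is the finiteness of $\int_0^1 s^{p/2}\rho(ds)$, which is exactly the defining condition \eqref{cond-2} of the class $\Sub(p)$; thus the hypothesis $Z\in\Sub(p)$ is precisely what makes the right-hand side finite. I do not expect a genuine obstacle here---the argument is a short interpolation---so the main thing to get right is the bookkeeping of the H\"older exponents and the passage from the ball to the full space $U$ when dominating the first factor by the moment bound.
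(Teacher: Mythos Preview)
Your proposal is correct and follows essentially the same route as the paper: the paper bounds $|u|_U^p\le 1$ on $B_U(0,1)$ for \eqref{s1}, and for \eqref{s2} it first enlarges the domain to all of $U$ and then applies the same H\"older/Jensen step $\int_U|u|_U^p\,\zeta_s(du)\le\big(\int_U|u|_U^2\,\zeta_s(du)\big)^{p/2}\le(Cs)^{p/2}$ via the Fernique moment bound (the paper calls this ``Cauchy--Schwartz'', but it is exactly your H\"older argument with the probability-measure factor absorbed). The derivation of \eqref{s3} by splitting the $\rho$-integral is identical.
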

\begin{proof}[Proof of Lemma \ref{lem-5}] To prove the first
inequality we note that
 \begin{eqnarray*}
 \int_1^\infty \big[ \int_{B_U(0,1)} \vert u\vert_U^p \zeta_s(du) \big] \rho(ds) &\leq&  \int_1^\infty \int_{B_U(0,1)} \vert u\vert_U^p \zeta_s(du)  \rho(ds)\\
\leq  \int_1^\infty \zeta_s(B_U(0,1)) \rho(ds) &\leq &
\int_1^\infty  \rho(ds) <\infty.
\end{eqnarray*}

 To prove the second
inequality we note that by the Cauchy-Schwartz inequality and part (3) of Remark \ref{rem-thm-exist-levy},
 \begin{eqnarray*}
\text{LHS} &\leq &  \int_0^1  \int_{U} \vert u\vert_U^p \zeta_s(du)  \rho(ds)\\
&\leq& \int_0^1  \big( \int_{U} \vert u\vert_U^2
\zeta_s(du)\big)^{\frac{p}2}  \rho(ds) \leq \int_0^1
(Cs)^\frac{p}2\rho(ds)=C\int_0^1 s^\frac{p}2\rho(ds).
\end{eqnarray*}
\end{proof}

By the above Lemma the case  $p\in (0,1]$ of Theorem \ref{Theorem-1} follows.

To treat the case of $p\in (1,2]$ note that he integral with respect
to $Y_1$ is defined as
\begin{eqnarray}
\nonumber
\int_0^t \Psi(s)\,dY_1(s)&:=&\int_0^t \int_{|u|<1}\Psi(s)u\pi(du,ds)\\&
= &\int_0^t \int_{|u|<1}\Psi(s)u\tilde{\pi}(du,ds)+\PV\int_0^t \int_{|u|<1}\Psi(s)u\nu(du)\,ds.
\label{int_Y_1}
\end{eqnarray}
Since $\PV\int_0^t \int_{|u|<1}\Psi(s)u\nu(du)\,ds=0$, see e.g. \cite[Thm 15.5]{Kallenberg_2002} we infer that
\begin{eqnarray}
\label{int_Y_1b}
\int_0^t \Psi(s)\,dY_1(s)&=&\int_0^t \int_{|u|<1}\Psi(s)u\,\tilde{\pi}(du,ds).
\end{eqnarray}

Since we assume that the space $E$ is martingale type $p$, by Lemma
\ref{lemma-3},  there exists a constant $C=C_p>0$ such that
$$\mathbb{E} \vert \int_0^t\int_{B_U(0,1)} \Psi(r)u
\tilde{\pi}(du,dr)  \vert_E^p \leq C
 \int_0^t\int_{B_U(0,1)} \vert \Psi(r)u \vert_E^p \nu(du)\,dr  .$$
 Therefore there exists a constant $C^\prime$ such that
 \begin{eqnarray}\nonumber
\mathbb{E} \vert \int_0^t\Psi(r)\, dY_1 (r) \vert_E^p &\leq  & C
 \int_0^t\int_{B_U(0,1)} \vert \Psi(r)u \vert_E^p \nu(du)\,dr\\
 \label{s4}
 &\leq& C^\prime  \int_0^t  \vert \Psi(r) \vert_{L(U,E)}^p  \, dr \int_{B_U(0,1)} \vert u \vert_E^p \nu(du)\,dr.
 \end{eqnarray}
Taking into account Lemma \ref{lem-5}, the result follows.
\end{proof}

Now we formulate the following important and useful consequence of Theorem \ref{Theorem-1}.

\begin{theorem}\label{Theorem-2} Assume  that  $p\in (0,2]$,   $E$ is a Banach space,  $Z$ is a subordinator process belonging to the class $\Sub(p)$ and  $Y$ is the L{\'e}vy process defined by formula (\ref{eqn_subordination'}).
 Assume  that  $\textsf{S}=\big(S(t)\big)_{t\geq 0}$ is $C_0$ semigroup on $E$ such that for each $t>0$, $S(t)$ is a 
 bounded  and linear operator from $U$ to $E$. Furthermore, we assume that  there exists $\theta >0$ with $\theta p <1$ such that and  for each $T>0$ one can find $C>0$ such that 
 \begin{equation}\label{ineq-A-1}
  \vert S(r)\vert_{\mathcal{L}(U,E)}\leq Cr^{-\theta},\; 0<r\leq T.
 \end{equation}

 Then for all $t\in \mathbb{R}_+$, with probability $1$, $\int_0^t S(t-r)\,dY(r)$ takes values in $E$ provided $p\in (0,1]$ or $p\in (1,2]$ and $E$ is of type $p$.
\end{theorem}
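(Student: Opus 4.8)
The plan is to deduce Theorem \ref{Theorem-2} directly from Theorem \ref{Theorem-1} by choosing, for each fixed $t$, the operator-valued integrand $\Psi(s):=S(t-s)$ and checking that it satisfies the hypotheses of Theorem \ref{Theorem-1} with $T=t$. First I would fix $t>0$ (the case $t=0$ being trivial) and set $\Psi(s)=S(t-s)$ for $s\in[0,t)$. By assumption $S(r)\in\mathcal{L}(U,E)$ for every $r>0$, so $\Psi(s)\in\mathcal{L}(U,E)$ for all $s\in[0,t)$, i.e.\ for almost every $s\in[0,t]$; the single point $s=t$, where $S(0)=\mathrm{id}$ need not be bounded from $U$ to $E$, is irrelevant for the integral.

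Next I would verify that $\Psi$ is strongly measurable as an $\mathcal{L}(U,E)$-valued map, using the semigroup law together with the $C_0$ property of $\textsf{S}$ on $E$. For $u\in U$ and $0<r'<r$ one has $S(r)u=S(r-r')\bigl(S(r')u\bigr)$, where $S(r')u\in E$ and $r\mapsto S(r-r')v$ is continuous on $(r',\infty)$ for each fixed $v\in E$. Letting $r'$ vary shows that $r\mapsto S(r)u$ is continuous from $(0,\infty)$ into $E$, hence $s\mapsto \Psi(s)u=S(t-s)u$ is continuous on $[0,t)$ for every $u$. Combined with the linearity and boundedness of each $\Psi(s)$, this yields the (joint) measurability of $(s,u)\mapsto\Psi(s)u$ that underlies the stochastic integral in the proof of Theorem \ref{Theorem-1}.

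The heart of the argument is the integrability condition \eqref{cond-1}. Using the bound \eqref{ineq-A-1} with $T=t$ and the substitution $r=t-s$,
$$
\int_0^t |\Psi(s)|^p_{\mathcal{L}(U,E)}\,ds
=\int_0^t |S(r)|^p_{\mathcal{L}(U,E)}\,dr
\le C^p\int_0^t r^{-\theta p}\,dr
=\frac{C^p}{1-\theta p}\,t^{\,1-\theta p}<\infty,
$$
where the finiteness near $r=0$ is precisely where the hypothesis $\theta p<1$ is used. Thus $\Psi$ satisfies \eqref{cond-1} on $[0,t]$.

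Finally I would invoke Theorem \ref{Theorem-1}: since $Z\in\Sub(p)$ and either $p\in(0,1]$, or $p\in(1,2]$ and $E$ is of type $p$, it follows that with probability $1$ the integral $\int_0^{t}\Psi(s)\,dY(s)=\int_0^t S(t-s)\,dY(s)$ takes values in $E$. As $t\in\mathbb{R}_+$ was arbitrary, this proves the claim (the exceptional null set being allowed to depend on $t$). I do not expect a genuine obstacle here: the result is essentially a corollary of Theorem \ref{Theorem-1}, and the only delicate point is the singularity of $\Psi$ at the upper endpoint $s=t$, which is rendered $p$-integrable exactly by the assumption $\theta p<1$.
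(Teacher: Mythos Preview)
Your proposal is correct and follows essentially the same route as the paper: fix $t$, set $\Psi(s)=S(t-s)$, use \eqref{ineq-A-1} together with $\theta p<1$ to verify condition \eqref{cond-1}, and then invoke Theorem \ref{Theorem-1}. The paper's proof is a one-liner to this effect; your version is the same argument with the measurability and endpoint details spelled out.
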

\begin{proof} It is enough to prove the Theorem for $t\in [0,T]$ for any fixed $T>0$. But then  the condition \eqref{ineq-A-1} implies  the condition \eqref{cond-1} is satisfied with $p<\frac1\theta$ by a family $\Psi(r)=S(T-r)$, $r\in (0,T]$.  Hence the result follows from \ref{Theorem-1}.
\end{proof}

\begin{remark}\label{rem-Theorem-2}{\rm
(i) As observed in the proof above,  condition \eqref{ineq-A-1} implies \eqref{cond-1} for $p<\frac1\theta$.  Hence we see the advantage of being able to treat the $p\leq 1$ case.\\
 (ii) If $\rho(ds)=s^{-1-\beta}$, then the condition \eqref{cond-2} is satisfied iff $p>\beta$. Hence, if $\beta$ is fixed there is a  lower limit on useful $p$.\\
 (iii) If $\rho(ds)=s^{-1-\beta}$, the condition \eqref{ineq-A-1} is satisfied and $\beta<\frac1\theta$, then  part (i) of Theorem \ref{Theorem-1} is applicable  with any $p\in (\beta,\frac1\theta)$. However, if $\beta\geq \frac1\theta$, then we need a different method. Such a  method is proposed in part (ii) of Theorem \ref{Theorem-1}.
(iv) If $E=U$  and
$\textsf{S}=\big(S(t)\big)_{t\geq 0}$ is a $C_0$ semigroup on $E$,
then the condition \eqref{cond-1} is satisfied for $p\in(0,2]$ and
hence Theorem above holds true. In particular, if $A$ denotes the
generator of the semigroup $\textsf{S}$, then, provided that $p\in
(0,1]$ or $p\in (1,2]$ and $E$ is a martingale type $p$ Banach
space,   for each $t>0$, $X(t)\in E$ a.s. where $X$ is
a solution of \eqref{eqn_langevin_01}. Moreover, if
$\textsf{S}=\big(S(t)\big)_{t\geq 0}$ is a $C_0$ \textit{group} on a
martingale type $p$ Banach space $E$, $p\in (1,2]$, then the
solution $X$ is $E$-valued \cadlag. Indeed, as in
\cite{Haus_Seidler_2008,Haus_Seidler_2001}, we have
$$X(t)=\int_0^tS(t-r)\, dY(r)=S(t)  \int_0^tS(-r)\, dY(r), \; t\geq 0.
$$
Since by \cite{Brz_H_2007p}, the process $\int_0^tS(-r)\, dY(r)$,
$t\geq$ is $E$-valued \cadlag, we infer that the process $X$ is also
$E$-valued \cadlag.

}\end{remark}

\section{Spatial regularity of  OU processes with fractional Laplacian }

In this section we investigate spatial
regularity of solutions $X$ of the Langevin equations
(\ref{eqn_langevin_Delta''}),
\begin{equation}\label{eqn_Lang_fractional}
\left\{\begin{array}{rcl}
dX(t)&+&(-\Delta)^\gamma X(t)=dY(t),\; t\geq 0,\\
X(0)&=&0,
\end{array}\right.
\end{equation}
where $(-\Delta)^\gamma$, $\gamma >0,$ is a fractional Laplacian,
with Dirichlet boundary conditions on a set $\mathcal{O}\subset
\mathbb{R}^d$  and $Y$ is a L{\'e}vy white noise in
$L^2(\mathcal{O})$. Our main aim is find conditions on the various
parameters guaranteeing that the solution $X$ takes values in the
space $E= C_0(\mathcal{O})$. We will treat the equation as an
abstract problem on a space $E$,
\begin{equation}
\label{eqn_langevin_01'} \left\{\begin{array}{rcl}
dX(t)&=&AX(t)+dY(t),\; t\geq 0,\\
X(0)&=&0,
\end{array}\right.
\end{equation}
with $E$ the Banach  space such that $E\subset H$, $A$ is  an
infinitesimal generator of a  $C_0$ semigroup on  $E$,   $x\in E$
and $Y=(Y(t))_{t\geq 0}$  a L{\'e}vy white noise defined by
\eqref{eqn-def-Y}.  We can  regard Laplacian $\Delta$,  on $\mathcal{O}$ and  with
zero boundary conditions,  as a self-adjoint operator and the
infinitesimal generator of a strongly continuous semigroup of
self-adjoint operators $T(t)$ on $L^{2}(\mathcal{O})$. The semigroup
can be restricted or extended to $L^{q}(\mathcal{O})$ in a natural
way. In this and similar situations the extensions or restrictions
will often be denoted in the same way as the original
objects.\vspace{3mm}

Our main tool in the study will be Theorem \ref{Theorem-1} and  the
validity of condition \eqref{ineq-A-1}. We will have also to check that
the space $U$ is chosen in such a way, that the process $Y$ is
$U$-valued, or equivalently that the embedding $H\embed U$ is
$\gamma$-radonifying.

\vspace{3mm}

\subsection{Analytic preliminaries}
Let us assume that  $\mathcal{O}\subset \mathbb{R}^d$ is a  bounded
open set with sufficiently smooth boundary.
 We will consider a real number $q$ such that $q\in(1,\infty)$.

  By
$H^{k,q}(\mathcal{O})$,  for $k \in \mathbb{N}$,  $q \in [1,\infty)$
we denote the Banach  space of all $f \in L^q(\mathcal{O})$  such
that  $D^\alpha f \in L^q(\mathcal{O})$, if the multi-index
$\alpha\in\mathbb{N}^d$ is of length $\leq  k$. The norm in
$H^{k,q}(\mathcal{O})$ is given by
\begin{equation}
\p f\p_{_{H^{k,q}(\mathcal{O})}}= \left( \sum_{|\alpha|\leq k} |D^\alpha
f|_{_{L^{q}(\mathcal{O})}}^q \right)^{\frac{d}q}. \label{2.1a}
\end{equation}
We define the fractional order Sobolev space
$H^{\beta,q}(\mathcal{O})$, $\beta \in \mathbb{R}_+\setminus
\mathbb{N}$ by the complex interpolation method, see
\cite{Lunardi_1995}, i.e.
\begin{equation}\label{eqn-sobolev_spaces}
H^{\beta,q}(\mathcal{O})=[H^{k,q}(\mathcal{O}), H^{m,q}(\mathcal{O})]_{\theta},
\end{equation}
 where
$k, m \in \mathbb{N}, \theta \in (0,1)$, $k<m$, are chosen to
satisfy
\begin{equation}\label{eqn-2.1c}
 \beta=(1-\theta)k+\theta m. \end{equation}
  One should bear in mind that the space on the LHS of formula
  (\ref{eqn-sobolev_spaces}) does not depend on $k,m,\theta$ provided they
  satisfy condition (\ref{eqn-2.1c}).

In what follows  by $H_0^{s,q}(\mathcal{O})$, $s\geq 0$,
$q\in(1,\infty)$ we will denote the closure of
$C_0^\infty(\mathcal{O})$ in the Banach space
$H^{s,q}(\mathcal{O})$. It is well known, see e.g. \cite[Theorem
11.1]{LM-72-i},     \cite[Theorem 1.4.3.2 p.317]{Triebel_1995}  and
   \cite[Theorem 3.40]{McLean-2000}  that
$H_0^{s,q}(\mathcal{O})=H^{s,q}(\mathcal{O})$ iff $s\leq \frac1{q}$.

\begin{remark}\label{rem-sob-spaces-type}
It is well known, see e.g. \cite{Pisier_1986} and \cite[Appendix B]{Brzezniak_1995} that all $L^q$ spaces, $q\geq p$, are of type $p$. By interpolation,
$H_0^{s,q}$ and $H_0^{s,q}$ spaces, $q\geq p$, are also of type $p$, see e.g. \cite[Appendix A]{Brzezniak_1995} for an argument used in the case of  martingale type $2$ spaces but which works also in the present situation.
\end{remark}
Let us denote by $A_q$ the Laplace operator in the space
$L^q(\mathcal{O})$, with the Dirichlet boundary conditions, i.e.
$D(A_q)=H^{2,q}\cap H_0^{1,q}$ and $A_qu=\Delta u$, for $u\in
D(A_q)$.

It is well known, see e.g. \cite{Triebel_1983}, that  $-A_q$ is an
operator with bounded imaginary powers and in particular the
operator $A_q^{\gamma/2}:=-(-A_q)^{\gamma/2}$, for $\gamma\in (0,2)$, is a generator of
an analytic semigroup on $L^q(\mathcal{O})$. Moreover,    it follows from Theorem 1.15.3 on p. 103 in
\cite{Triebel_1995} that for $\gamma \in (0,2)$,
$D(A_q^{\gamma/2})=D((-A_q)^{\gamma/2})=[L^q(\mathcal{O}),D(A_q)]_{{\gamma/2}}$, where
$[L^q(\mathcal{O}),D(A_q)]_{{\gamma/2}}$ is the complex interpolation
space of order ${\gamma/2}$, see e.g. \cite{LM-72-i},
\cite{Triebel_1995} and  \cite[Theorem 4.2]{Taylor_1981}. Moreover,
by Seeley \cite{Seeley_1972}, see also  the  monograph \cite[Theorem
in section 4.4.3]{Triebel_1995}, we have that for $q\in (1,\infty)$,

\begin{equation}\label{eqn-domains}
 D(A_q^{\gamma/2})=\begin{cases} H^{\gamma,q}(\mathcal{O})\cap H_0^{1,q}(\mathcal{O}), &
\text{ if } \; 1 < \gamma \leq 2 , \\
H^{\gamma,q}_0(\mathcal{O}), & \text{ if } \; \frac1{q}< \gamma \leq 1,\\
H^{\gamma,q}(\mathcal{O}), & \text{ if } \gamma  < \frac1{q}.
\end{cases}
\end{equation}


\begin{proposition}\label{prop_ass1_Delta}  Assume that  $H=L^{2}(\mathcal{O})$ and $U=H^{-s,q}(\mathcal{O})$ with
 \begin{eqnarray}\label{ineq-HS}
  s&>&\frac{d}2, \;\; q\in [1,\infty).
   \end{eqnarray}
\begin{trivlist}
\item[\,\,i)] If
 \begin{eqnarray}\label{ineq-degree}
 r>0 ,\;  r+s<\gamma ,
   \end{eqnarray}
    then   the  $C_0$ semigroup  $\big(e^{tA_q^{\gamma/2}}\big)_{t\geq 0}$   satisfies  condition \eqref{ineq-A-1}
 with exponent $\theta:=\frac{r+s}\gamma$, $E=H_0^{r,q}(\mathcal{O})$ and  $U=H^{-s,q}(\mathcal{O})$,  
\item[\,\,ii)] If
 \begin{equation}\label{ass-boxed}
  \delta+\frac{d}q<\gamma-\frac{d}2,\; \delta \geq 0,
 \end{equation}
   then the semigroup $\big(e^{tA_q^{\gamma/2}}\big)_{t\geq 0}$ satisfies  condition \eqref{ineq-A-1}  with  $E=C_0^\delta (\mathcal{O})$, $U=H^{-s,q}(\mathcal{O})$ and the exponent $\theta\geq \frac{\delta+\frac{d}2}{\gamma}$. 
 \end{trivlist}
\end{proposition}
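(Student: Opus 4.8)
The plan is to deduce both estimates from the standard smoothing property of the analytic semigroup generated by a fractional power of the Dirichlet Laplacian, together with the identification of the fractional power domains with the Sobolev scale supplied by \eqref{eqn-domains}. Throughout I write $B:=(-A_q)^{\gamma/2}$, so that $S(t)=e^{tA_q^{\gamma/2}}=e^{-tB}$ is analytic on $L^q(\mathcal{O})$ and, because $-A_q$ has bounded imaginary powers, the scale of complex interpolation/extrapolation spaces generated by $-A_q$ coincides with the Sobolev scale $H^{\sigma,q}(\mathcal{O})$ (carrying the Dirichlet conditions for the relevant positive $\sigma$). The single analytic estimate I would invoke is
\begin{equation*}
\|B^{\tau}e^{-tB}\|_{\mathcal{L}(L^q(\mathcal{O}))}\leq C_\tau\,t^{-\tau},\qquad 0<t\leq T,\ \tau\geq 0 .
\end{equation*}

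For part (i) set $\theta:=\frac{r+s}{\gamma}$, which lies in $(0,1)$ precisely because $r>0$ and $r+s<\gamma$. I would use the two norm equivalences $\|w\|_{H_0^{r,q}}\simeq\|(-A_q)^{r/2}w\|_{L^q}$ (valid on $D((-A_q)^{r/2})=H_0^{r,q}(\mathcal{O})$ by \eqref{eqn-domains}) and $\|u\|_{H^{-s,q}}\simeq\|(-A_q)^{-s/2}u\|_{L^q}$ (the definition of the negative–order member of the scale). Writing $v:=(-A_q)^{-s/2}u$, so that $\|v\|_{L^q}\simeq\|u\|_{U}$, and commuting powers of $-A_q$ with the semigroup gives
\begin{equation*}
(-A_q)^{r/2}S(t)u=(-A_q)^{(r+s)/2}S(t)v=B^{\theta}e^{-tB}v ,
\end{equation*}
whence $\|S(t)u\|_{E}\simeq\|(-A_q)^{r/2}S(t)u\|_{L^q}\leq C\,t^{-\theta}\|v\|_{L^q}\leq C'\,t^{-\theta}\|u\|_{U}$. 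This is exactly \eqref{ineq-A-1} with $E=H_0^{r,q}(\mathcal{O})$ and exponent $\theta=\frac{r+s}{\gamma}$.

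For part (ii) the idea is to run part (i) into an intermediate Sobolev space and then apply a Hölder–Sobolev embedding. Given $\delta\geq 0$ with $\delta+\frac dq<\gamma-\frac d2$, I would choose $s>\frac d2$ close to $\frac d2$ and $r$ with $\delta+\frac dq<r<\gamma-s$; such an $r$ exists precisely by \eqref{ass-boxed}. Part (i) then gives $\|S(t)\|_{\mathcal{L}(H^{-s,q},H_0^{r,q})}\leq C\,t^{-(r+s)/\gamma}$, and since $r-\frac dq>\delta$ the Sobolev embedding $H_0^{r,q}(\mathcal{O})\embed C_0^\delta(\mathcal{O})$ holds, the zero–trace condition built into $H_0^{r,q}$ matching the vanishing on $\partial\mathcal{O}$ encoded in $C_0^\delta$. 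Composing yields \eqref{ineq-A-1} with $E=C_0^\delta(\mathcal{O})$ and $\theta=\frac{r+s}{\gamma}$; since $r+s>\delta+\frac dq+\frac d2\geq\delta+\frac d2$ this exponent satisfies $\theta\geq\frac{\delta+d/2}{\gamma}$, and by letting $q\to\infty$, $s\todown\frac d2$ and $r\todown\delta+\frac dq$ it can be pushed arbitrarily close to the stated lower bound $\frac{\delta+d/2}{\gamma}$.

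The step I expect to be the main obstacle is the boundary–condition bookkeeping hidden in the norm equivalences: one must verify that for the relevant indices $D((-A_q)^{r/2})$ coincides with $H_0^{r,q}(\mathcal{O})$ rather than with $H^{r,q}(\mathcal{O})$, which follows from \eqref{eqn-domains} together with the trace description of $H_0^{r,q}$ (for instance $H^{r,q}\cap H_0^{1,q}=H_0^{r,q}$ when $\frac1q<r<1+\frac1q$), and that the extrapolated power $(-A_q)^{-s/2}$ identifies $U=H^{-s,q}(\mathcal{O})$ with $L^q(\mathcal{O})$ up to equivalence of norms. These identifications rest on the boundedness of imaginary powers of $-A_q$ and on Seeley's domain description \eqref{eqn-domains}; once they are in place the smoothing estimate makes both bounds immediate.
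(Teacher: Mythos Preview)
Your proof is correct and follows essentially the same route as the paper's: for (i) both identify the Sobolev scale with fractional-power domains via \eqref{eqn-domains} and apply the analytic smoothing estimate $\|B^\theta e^{-tB}\|\leq Ct^{-\theta}$; for (ii) both choose $r>\delta+d/q$ and $s>d/2$ with $r+s<\gamma$, invoke part (i), and compose with the Sobolev embedding $H_0^{r,q}(\mathcal{O})\hookrightarrow C_0^\delta(\mathcal{O})$. Your closing remark about varying $q$ is extraneous since $q$ is fixed in the statement, but this does not affect the argument.
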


\begin{proof}[Proof of Proposition  \ref{prop_ass1_Delta}]
To prove the first statement  note that for $\gamma  < \frac1{q}$,
$H^{\gamma,q}_0(\mathcal{O})=H^{\gamma,q}(\mathcal{O})$. Let us
denote by $S=S_{q,\gamma}$, the semigroup on the Banach space
$L^q(\mathcal{O})$ generated by the operator $A_q^{\gamma/2}$, i.e. $S_{q,\gamma}(t)=e^{tA_q^{\gamma/2}}$, $t\geq 0$.
 It follows from \eqref{eqn-domains} and some standard estimates on the norm of the analytic semigroup between domains of fractional powers of the generator, see e.g. \cite{Pazy_1983}, that there exists a constant $C>0$ such that
 
\begin{eqnarray}\label{ineq-semigroup}
\vert S_{q,\gamma}(t)\vert_{{L}(H^{-s,q},H_0^{r,q})} &\leq & C \vert S_{q,\gamma}(t)\vert_{\mathcal{L}(D(A_q^{-\frac{s}\gamma}),D(A_q^{\frac{r}\gamma}))}\\
&\leq &
Ct^{-\frac{s}\gamma-\frac{r}\gamma}=Ct^{-\frac{r+s}\gamma}.\nonumber
\end{eqnarray}
Thus the first statement is a direct consequence of inequality
\eqref{ineq-semigroup}.

 To prove the second part of the Proposition
let us put $\eps:=\gamma-\frac{d}2-(\delta+\frac{d}q)$ which,  in
view of assumption \eqref{ass-boxed},  is $>0$. Let us choose a
numbers $r$  such that
  \begin{eqnarray}\label{eqn-4.2}
  r&>&\delta+\frac{d}q,
   \end{eqnarray}
 and $r-(\delta+\frac{d}q)<\frac{\eps}2$ and  a number $s$ satisfying the condition \eqref{ineq-HS} such that $s-\frac{d}2<\frac{\eps}2$. Then   $H_0^{r,q}(\mathcal{O})\subset E:=C_0^\delta (\mathcal{O})$ and  by  inequality \eqref{ineq-semigroup} we infer that
 \begin{eqnarray}\label{ineq-semigroup2}
\vert S_{q,\gamma}(t)\vert_{{L}(U,E)} &\leq & C t^{-\frac{r+s}\gamma}.
\end{eqnarray}
Since $r+s<(\delta+\frac{d}q)+\frac{\eps}2+\frac{d}2+\frac{\eps}2=\delta+\frac{d}q+\frac{d}q+\eps=\gamma$ we infer that   condition \eqref{ineq-A-1}  is satisfied with $\theta=\frac{\delta+\frac{d}2}{\gamma}$. It can easily shown that condition \eqref{ineq-A-1} is satisfied with any $\theta>\frac{\delta+\frac{d}2}{\gamma}$.
\end{proof}
\begin{remark}\label{rem-prop_ass1_Delta}
In fact we can replace the space $H=L^2(\mathcal{O})$ by $H^{\vartheta,2}(\mathcal{O})$ with, for concreteness, $\vartheta\in (-\frac12,\frac12)$. Let us recall that for $\theta\in [0,\frac12)$, $H^{\vartheta,2}(\mathcal{O})=H^{\vartheta,2}_0(\mathcal{O})$, i.e. the space $C_0^\infty(\mathcal{O})$ is dense in $H^{\vartheta,2}(\mathcal{O})$. Then an appropriate version of Proposition \ref{prop_ass1_Delta} holds true. Let us formulate, for future reference, the first two parts.\\
\textit{Let $H=H^{\vartheta,2}(\mathcal{O})$ with  $\vartheta\in (-\frac12,\frac12)$ and $U=H^{-s,q}(\mathcal{O})$.
\begin{trivlist}
\item[\,\,i)] If
 \begin{eqnarray}\label{ineq-HS'}
  s&>&\frac{d}2-\vartheta, \;\; q\in [1,\infty)
   \end{eqnarray}
then  the embedding $H\embed U$ is $\gamma$-radonifying.
\item[\,\,ii)] If \eqref{ineq-HS'} holds and
 \begin{eqnarray}\label{ineq-degree'}
 r>0 ,\;  r+s<\gamma
   \end{eqnarray}
    then,    the  family $\Psi=S_{q,\gamma}(\cdot)$ satisfies condition \eqref{ineq-A-1}  with exponent $\theta:=\frac{r+s}\gamma$ and    $E=H_0^{r,q}(\mathcal{O})$.
 \end{trivlist}
 }
Let us also note that the supremum of all numbers $r$ satisfying conditions (\ref{ineq-HS'}-\ref{ineq-degree'}) is equal to $\gamma+\vartheta-\frac{d}2$.
\end{remark}

\subsection{OU process driven by a cylindrical $\alpha$-stable processes with $\alpha<1$}
\label{subsec-4.1}

The following result is  a direct consequence of Theorem
\ref{Theorem-1} part (i) and Proposition \ref{prop_ass1_Delta}. Let
us recall that by $\LSub(L^2,p)$ we denote the class of all
cylindrical L{\'e}vy processes defined by formula \eqref{eqn-def-Y}
with  $W$ being a cylindrical Wiener process on the Hilbert space
$H=L^2$ and $Z$ being a subordinator process belonging to the class
$\Sub(p)$.
\begin{theorem}\label{Theorem_appl_1_i}
Assume that $p\in (0,1]$ and that  $Y$ belongs to  $\LSub(L^2,p)$.
Assume that the degree $\gamma$ of the drift operator in equation \eqref{eqn_Lang_fractional} satisfies the assumption \eqref{ineq-degree}. Then, for each $s$ satisfying the condition \eqref{ineq-HS} (and the set of such numbers $s$ is non-empty), and  for all $q\in (1,\infty)$,  $r>0$  such that ${r+s}<\gamma$,
\begin{trivlist}
\item[\textit{(i)}]  The process $Y$ has an  $U=H^{-s,q}$-valued modification which is a L{\'e}vy process.
\item[\textit{(ii)}]
 For all $t\in [0,T]$, with probability $1$, the stochastic convolution $\int_0^t e^{(t-s)A_q^{\gamma/2}}\,dY(s)$ takes values in $H_0^{r,q}(\mathcal{O})$.
\end{trivlist}
Moreover, if $\delta\in [0,\gamma-\frac{d}2)$, then  appropriate $r$ and $s$ can be found such that

\begin{trivlist}
\item[\textit{(iii)}]
 For all $t\in [0,T]$, with probability $1$, the stochastic convolution $\int_0^t e^{(t-s)A_q^{\gamma/2}}\,dY(s)$ takes values in $C_0^\delta (\mathcal{O})$.
\end{trivlist}
\end{theorem}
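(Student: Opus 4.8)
The plan is to assemble the three assertions from the tools already in place, the decisive feature being that $p\le 1$ renders the exponent condition $\theta p<1$ of Theorem \ref{Theorem-2} automatic, so that no type-$p$ hypothesis is ever needed. For \emph{(i)} I would invoke the radonifying criterion underlying the very existence of the $U$-valued process. Since $Y\in\LSub(L^2,p)$ arises by subordinating a cylindrical Wiener process on $H=L^2(\mathcal{O})$, Theorem \ref{thm-exist-levy} furnishes a genuine $U$-valued L\'evy process exactly when the embedding $H\embed U$ is $\gamma$-radonifying (Remark \ref{rem-thm-exist-levy}(1)). For $U=H^{-s,q}(\mathcal{O})$ with $s>d/2$, i.e. under \eqref{ineq-HS}, this is precisely Remark \ref{rem-prop_ass1_Delta}(i) with $\vartheta=0$, and the admissible range $s>d/2$ is plainly non-empty; this produces the required $U$-valued modification.

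For \emph{(ii)} I would feed Proposition \ref{prop_ass1_Delta}(i) into Theorem \ref{Theorem-2}. With $r>0$, $s>d/2$ and $r+s<\gamma$, that proposition gives the bound \eqref{ineq-A-1} for the semigroup $e^{tA_q^{\gamma/2}}$ from $U=H^{-s,q}$ to $E=H_0^{r,q}$ with exponent $\theta=(r+s)/\gamma$. The one remaining hypothesis of Theorem \ref{Theorem-2} is $\theta p<1$; but $p\le 1$ and $r+s<\gamma$ give $\theta p\le\theta=(r+s)/\gamma<1$ for free. Theorem \ref{Theorem-2} in its $p\in(0,1]$ case (which itself rests on Theorem \ref{Theorem-1}(i)) then yields $\int_0^t e^{(t-\sigma)A_q^{\gamma/2}}\,dY(\sigma)\in H_0^{r,q}(\mathcal{O})$ for all $t\in[0,T]$ with probability one.

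For \emph{(iii)} I would chain the conclusion of \emph{(ii)} with the Sobolev embedding $H_0^{r,q}(\mathcal{O})\embed C_0^\delta(\mathcal{O})$, valid once $r-d/q>\delta$ (then $r>d/q\ge 0$ already forces continuity and the vanishing boundary trace). As this embedding is deterministic, it upgrades the almost-sure membership in $H_0^{r,q}$ from \emph{(ii)} to almost-sure membership in $C_0^\delta$ with no further probabilistic input. It remains only to exhibit $q\in(1,\infty)$, $s>d/2$ and $r>0$ meeting $r+s<\gamma$ and $r-d/q>\delta$ at once; this is the index bookkeeping already carried out inside the proof of Proposition \ref{prop_ass1_Delta}(ii). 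Putting $\eps:=\gamma-d/2-\delta$, positive precisely because $\delta<\gamma-d/2$, I would take $q$ so large that $d/q<\eps/4$, then $s\in(d/2,d/2+\eps/2)$ and $r\in(\delta+d/q,\delta+d/q+\eps/4)$; one verifies $r-d/q>\delta$ and $r+s<\delta+d/q+d/2+\tfrac34\eps<\gamma$, so \emph{(ii)} together with the embedding gives the claim.

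Because every ingredient is already established, there is no serious obstacle; the single delicate point is the joint solvability of the three Sobolev constraints $s>d/2$, $r+s<\gamma$ and $r-d/q>\delta$ in part \emph{(iii)}, and this is exactly where the hypothesis $\delta<\gamma-d/2$ is consumed.
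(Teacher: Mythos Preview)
Your proposal is correct and follows essentially the same route as the paper: part (i) via Theorem \ref{thm-exist-levy} and the $\gamma$-radonifying embedding $L^2\embed H^{-s,q}$ for $s>d/2$, part (ii) by combining the semigroup estimate of Proposition \ref{prop_ass1_Delta}(i) with Theorem \ref{Theorem-2} (the key observation being that $p\le 1$ makes $\theta p<1$ automatic), and part (iii) via the Sobolev embedding with the same parameter bookkeeping as in the proof of Proposition \ref{prop_ass1_Delta}(ii). The paper's own proof is extremely terse (it only spells out (i) and declares the rest a direct consequence of Theorem \ref{Theorem-1}(i) and Proposition \ref{prop_ass1_Delta}), so you have simply written out what the paper leaves implicit.
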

\begin{proof}
Part (i) follows from  Theorem \ref {thm-exist-levy} and a well
known result for Gaussian,  see e.g. \cite{Brzezniak_1997}.

\end{proof}
\begin{remark}\label{rem-Theorem_appl_1_i}
(a) In the special case of the subordinator process
$Z^{\frac{\alpha}2}$ with  $\alpha \in (0,2)$, i.e. when $\rho$ is
defined by formula \eqref{eqn-rho} with $\beta=\frac\alpha2$, the
condition \eqref{cond-2} is satisfied when $\alpha<1$. Below we will
show how it is possible to remove the condition  $\alpha<1$ by
applying
part (ii) of Theorem \ref{Theorem-1}. \\
(b)
\end{remark}
In view of the  Remark \ref{rem-Theorem_appl_1_i} we have the following special case of Theorem   \ref{Theorem_appl_1_i}.

\begin{corollary}\label{cor-Y^alpha_i} Suppose that $\alpha \in (0,1)$,  and $\delta\in [0,\gamma-\frac{d}2)$.
Then,   for an appropriate $q\in(1,\infty)$, for all $t\in [0,T]$,
with probability $1$, the stochastic convolution $\int_0^t
e^{(t-s)A_q^{\gamma/2}}\,dY^\alpha(s)$ takes values in $C_0^\delta
(\mathcal{O})$.
\end{corollary}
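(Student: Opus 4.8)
The plan is to recognize the statement as an immediate specialization of Theorem \ref{Theorem_appl_1_i}(iii), so that the only genuine work is to verify that the driving noise $Y^\alpha$ belongs to the class $\LSub(L^2,p)$ for some exponent $p\le 1$. Once this membership is in place, every hypothesis of Theorem \ref{Theorem_appl_1_i} is satisfied and its third part yields the assertion with no further computation.

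First I would identify the correct subordinator exponent. By Definition \ref{example_2.1}(ii) the subordinator $Z^{\alpha/2}$ has intensity measure $\rho(d\xi)=c_\alpha\,\xi^{-1-\alpha/2}1_{(0,\infty)}(\xi)\,d\xi$, so the defining integrability condition \eqref{cond-2} for the class $\Sub(p)$ reads $\int_0^1\xi^{p/2}\rho(d\xi)=c_\alpha\int_0^1\xi^{p/2-1-\alpha/2}\,d\xi<\infty$, which holds precisely when $p>\alpha$. Since $\alpha\in(0,1)$, I may therefore choose some $p\in(\alpha,1]$; then $Z^{\alpha/2}\in\Sub(p)$, and consequently $Y^\alpha\in\LSub(L^2,p)$ with $p\le 1$. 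This is exactly the content of Remark \ref{rem-thm-exist-levy}(5).

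With such a $p\in(\alpha,1]$ fixed, I would then apply Theorem \ref{Theorem_appl_1_i}, whose standing hypotheses ($p\in(0,1]$ and $Y\in\LSub(L^2,p)$) are now met by $Y=Y^\alpha$. The assumption $\delta\in[0,\gamma-\frac d2)$ is identical to the one demanded in part (iii) of that theorem: given such a $\delta$ one selects $q\in(1,\infty)$ large enough that $\delta+\frac dq<\gamma-\frac d2$, which in turn (through Proposition \ref{prop_ass1_Delta}(ii)) produces admissible exponents $r,s$ for which the semigroup decay estimate \eqref{ineq-A-1} holds. Part (iii) of Theorem \ref{Theorem_appl_1_i} then guarantees that for all $t\in[0,T]$, with probability $1$, the convolution $\int_0^t e^{(t-s)A_q^{\gamma/2}}\,dY^\alpha(s)$ takes values in $C_0^\delta(\mathcal{O})$, which is the desired conclusion.

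I do not anticipate any real obstacle, since all of the analytic content already resides in Theorem \ref{Theorem_appl_1_i} and, beneath it, in Theorem \ref{Theorem-1}. The one conceptual point worth emphasising is why the restriction $\alpha<1$ is imposed: it is precisely what permits the choice $p\le 1$, which is what allows one to invoke Theorem \ref{Theorem-1}(i) (resting on Lemma \ref{lemma-2}) and hence to impose no geometric, and in particular no type-$p$, assumption on the target space $E=C_0^\delta(\mathcal{O})$. For $\alpha\in[1,2)$ one would instead be forced to take $p>1$ and to rely on the type-$p$ machinery of Theorem \ref{Theorem-1}(ii), which is the route followed in the companion Corollary \ref{cor-Y^alpha_ii}.
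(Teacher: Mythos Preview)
Your proposal is correct and follows exactly the route the paper takes: the corollary is stated there as an immediate special case of Theorem \ref{Theorem_appl_1_i}(iii) via Remark \ref{rem-Theorem_appl_1_i}(a), and your argument simply makes explicit the verification that $Z^{\alpha/2}\in\Sub(p)$ for some $p\in(\alpha,1]$ (equivalently, Remark \ref{rem-thm-exist-levy}(5)). Your closing paragraph on why $\alpha<1$ is needed is a helpful gloss but not part of the paper's (essentially absent) proof.
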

\begin{remark}\label{rem-cor-Y^alpha_i}
It should be obvious that in Corollary \ref{cor-Y^alpha_i} the number $q$ plays only an auxiliary r\^ole. Firstly, we define the stochastic integral $\int_0^t e^{(t-s)A_q^{\gamma/2}}\,dY^\alpha(s)$ by means of an operator $A_q$ which depends $q$. Secondly, we prove that this stochastic integral takes values in the Sobolev space $H_0^{r,q}(\mathcal{O})$. Somehow, imprecisely, Corollary \ref{cor-Y^alpha_i} could be formulated as follows. \\
Suppose that $\alpha \in (0,1)$  and $\delta\in
[0,\gamma-\frac{d}2)$. Then  for all $t\in [0,T]$, with probability
$1$, the stochastic convolution $\int_0^t
e^{-(t-s)(-\Delta)^{\gamma/2}}\,dY^\alpha(s)$ takes values in
$C_0^\delta (\mathcal{O})$.
\end{remark}

\subsection{OU process driven by a cylindrical $\alpha$-stable processes with $\alpha>1$}
\label{subsec-4.2}

The previous argument works for a subordinator process satisfying the condition \eqref{cond-2} for some $p\in (0,1]$. In this section we consider  subordinator processes satisfying the condition \eqref{cond-2} for some $p\in (1,2)$.


\begin{theorem}\label{Theorem_appl_1_ii}
Assume that $p\in (1,2)$ and that  $Y$ belongs to  $\LSub(L^2,p)$.
Assume that the degree $\gamma$ of the  drift operator in equation \eqref{eqn_Lang_fractional} satisfies the assumption
 \begin{eqnarray}\label{ineq-degree-p}
  \frac\gamma{p}&>&\frac{d}2
   \end{eqnarray}

 Then, for each $s$ satisfying the condition \eqref{ineq-HS}\footnote{Note and the set of such numbers $s$ is non-empty.} and  for all $q\in [p,\infty)$,  $r>0$  such that ${r+s}<\frac\gamma{p}$,
\begin{trivlist}
\item[\textit{(i)}]  The process $Y$ has a    $U=H^{-s,q}$-valued modification which is a L{\'e}vy process.
\item[\textit{(ii)}]
 For all $t\in [0,T]$, with probability $1$, the stochastic convolution $\int_0^t e^{(t-s)A_q^{\gamma/2}}\,dY(s)$ takes values in $H_0^{r,q}(\mathcal{O})$.
\end{trivlist}
Moreover, if
 \begin{eqnarray}\label{cond-3}
\delta\in [0,\frac\gamma{p}-\frac{d}2),
   \end{eqnarray}
 then  appropriate $r$ and $s$ can be found so that
\begin{trivlist}
\item[\textit{(iii)}]
 For all $t\in [0,T]$, with probability $1$, the stochastic convolution $\int_0^t e^{(t-s)A_q^{\gamma/2}}\,dY(s)$ takes values in $C_0^\delta (\mathcal{O})$.
\end{trivlist}
\end{theorem}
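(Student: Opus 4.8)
The plan is to deduce Theorem~\ref{Theorem_appl_1_ii} as a direct parallel to Theorem~\ref{Theorem_appl_1_i}, now invoking part (ii) of Theorem~\ref{Theorem-1} (the martingale type $p$ case) instead of part (i). The essential difference from the $\alpha<1$ regime is that the relevant growth budget for the semigroup is $\frac{\gamma}{p}$ rather than $\gamma$, which is exactly why condition \eqref{ineq-degree-p} reads $\frac{\gamma}{p}>\frac{d}2$ and why the H\"older exponent range in \eqref{cond-3} is $\delta\in[0,\frac{\gamma}{p}-\frac{d}2)$.

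First I would establish part (i): for $s>\frac{d}2$ and $q\in[p,\infty)$, the process $Y$ has an $H^{-s,q}$-valued modification that is a L\'evy process. This follows from Theorem~\ref{thm-exist-levy} together with the $\gamma$-radonifying property of the embedding $H=L^2(\mathcal{O})\embed U=H^{-s,q}(\mathcal{O})$ recorded in Remark~\ref{rem-prop_ass1_Delta}; exactly as in the proof of Theorem~\ref{Theorem_appl_1_i}(i), the choice $s>\frac{d}2$ guarantees this embedding is radonifying, so $Y$ lives in $U$. Next, for part (ii), I would fix $q\in[p,\infty)$ and $r>0$ with $r+s<\frac{\gamma}{p}$ and apply Proposition~\ref{prop_ass1_Delta}(i): the semigroup $S_{q,\gamma}(t)=e^{tA_q^{\gamma/2}}$ satisfies the growth bound \eqref{ineq-A-1} with exponent $\theta=\frac{r+s}{\gamma}$, mapping $U=H^{-s,q}$ into $E=H_0^{r,q}(\mathcal{O})$. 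The crucial bookkeeping is that $r+s<\frac{\gamma}{p}$ forces $\theta p=\frac{(r+s)p}{\gamma}<1$, which is precisely the hypothesis $\theta p<1$ needed to invoke Theorem~\ref{Theorem-2} (equivalently Theorem~\ref{Theorem-1}). Since $q\geq p$, the space $E=H_0^{r,q}(\mathcal{O})$ is of type $p$ by Remark~\ref{rem-sob-spaces-type}, so part (ii) of Theorem~\ref{Theorem-1} applies and delivers that $\int_0^t e^{(t-s)A_q^{\gamma/2}}\,dY(s)$ takes values in $H_0^{r,q}(\mathcal{O})$ with probability $1$ for all $t\in[0,T]$.

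For part (iii), I would pass from Sobolev to H\"older regularity via Sobolev embedding. Given $\delta\in[0,\frac{\gamma}{p}-\frac{d}2)$, set $\eps:=\frac{\gamma}{p}-\frac{d}2-(\delta+\frac{d}q)$ for a $q$ chosen large enough that $\eps>0$ (possible since $\frac{d}q\to 0$ as $q\to\infty$), then select $r>\delta+\frac{d}q$ with $r-(\delta+\frac{d}q)<\frac{\eps}2$ and $s>\frac{d}2$ with $s-\frac{d}2<\frac{\eps}2$, in exact analogy with the construction in the proof of Proposition~\ref{prop_ass1_Delta}(ii). These choices simultaneously ensure the embedding $H_0^{r,q}(\mathcal{O})\embed C_0^\delta(\mathcal{O})$ (so $E$-valued regularity upgrades to $C_0^\delta$) and the budget inequality $r+s<\frac{\gamma}{p}$ needed for part (ii). Combining with part (ii) then yields (iii).

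The step I expect to be the main obstacle is verifying the chain of inequalities in (iii): one must confirm that a single pair $(r,s)$ — and an auxiliary $q$ — can be chosen to satisfy the H\"older embedding requirement $r>\delta+\frac{d}q$, the radonifying requirement $s>\frac{d}2$, and the contractive-budget requirement $r+s<\frac{\gamma}{p}$ all at once. The arithmetic mirrors the $\eps$-splitting already carried out in Proposition~\ref{prop_ass1_Delta}(ii), but here the target is $\frac{\gamma}{p}$ rather than $\gamma$, so the feasibility hinges entirely on the sharp form of hypothesis \eqref{ineq-degree-p} and on letting $q$ grow to suppress the $\frac{d}q$ terms; the rest of the argument is a routine transcription of the $\alpha<1$ proof with part (ii) of Theorem~\ref{Theorem-1} in place of part (i).
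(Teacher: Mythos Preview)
Your proposal is correct and follows essentially the same approach as the paper's proof. The paper only writes out part \textit{(iii)} explicitly (declaring parts \textit{(i)} and \textit{(ii)} immediate), and there carries out the identical strategy you describe: choose $q\in[p,\infty)$ large enough that $\delta+\tfrac{d}{q}+\tfrac{d}{2}<\tfrac{\gamma}{p}$, then perform an $\eps$-splitting to produce $r>\delta+\tfrac{d}{q}$ and $s>\tfrac{d}{2}$ with $r+s<\tfrac{\gamma}{p}$; the only cosmetic difference is that the paper takes $\eps=\tfrac{1}{3}\bigl(\tfrac{\gamma}{p}-\delta-\tfrac{d}{q}-\tfrac{d}{2}\bigr)$ and sets $r=\delta+\tfrac{d}{q}+\eps$, $s=\tfrac{d}{2}+\eps$, whereas you mirror the $\eps/2$ splitting from Proposition~\ref{prop_ass1_Delta}(ii).
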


\begin{proof}[Proof of Theorem \ref{Theorem_appl_1_ii}] We only need to proof the part \textit{(iii)}. Trivially we can find $q\in [p,\infty)$ such
that $\delta +\frac{d}q+ \frac{d}2<\frac\gamma{p}$. In view of inequality \eqref{ineq-semigroup} in order to show that the
condition \eqref{cond-1} is satisfied, we need to show that we can find $r,s$ such that the conditions (\ref{eqn-4.2})-(\ref{ineq-HS})
hold true and $p\frac{r+s}\gamma<1$. Since the last condition is equivalent to $r+s<\frac\gamma{p}$, the proof is concluded by observing that with $\eps=\frac13\big(\frac\gamma{p}- \delta -\frac {d}q- \frac{d}2 \big)>0$, it is enough to put $r=\delta+\frac{d}q+\eps$ and $s=\frac{d}2+\eps$.
\end{proof}


\begin{remark}\label{rem-Theorem_appl_1_ii} As in Remark \ref{rem-prop_ass1_Delta} the Hilbert space $H=L^2$ can be replaced by $H^{\vartheta,2}(\mathcal{O})$ with, for concreteness $\vartheta\in (-\frac12,\frac12)$.  The appropriate version of \ref{Theorem_appl_1_ii} can be  formulated as follows. \\
\textit{Let $H=H^{\vartheta,2}(\mathcal{O})$ with  $\vartheta\in (-\frac12,\frac12)$ and $U=H^{-s,q}(\mathcal{O})$. Assume that $p\in (1,2)$ and that a cylindrical L{\'e}vy process $Y$ belongs to the class $\LSub(L^2,p)$, i.e. $Y$ is defined by formula \eqref{eqn-def-Y}, where
$W$ is a cylindrical Wiener process on the Hilbert space $H=H^{\vartheta,2}(\mathcal{O})$ with $\vartheta\in (-\frac12,\frac12)$, and $Z$ is a subordinator process belonging to the class $\Sub(p)$.
Assume that the degree $\gamma$ of the  drift operator in equation \eqref{eqn_Lang_fractional} satisfies the assumption
 \begin{eqnarray}\label{ineq-degree-p'}
  \frac\gamma{p}&>&\frac{d}2-\vartheta
   \end{eqnarray}
 Then, for each $s$ satisfying the condition \eqref{ineq-HS'} and  for all $q\in [p,\infty)$,  $r>0$  such that $r+s<\frac\gamma{p}$,
\begin{trivlist}
\item[\textit{(i)}]  The process $Y$ has a    $U=H^{-s,q}$-valued modification which is a L{\'e}vy process.
\item[\textit{(ii)}]
 For all $t\in [0,T]$, with probability $1$, the stochastic convolution $\int_0^t e^{(t-s)A_q^{\gamma/2}}\,dY(s)$ takes values in $H_0^{r,q}(\mathcal{O})$.
\end{trivlist}
 }
Let us also note that the supremum of all numbers $r$ satisfying the  conditions (\ref{ineq-HS'}) and (\ref{ineq-degree-p'}) is equal to $\frac\gamma{p}+\vartheta-\frac{d}2$.
\end{remark}

\begin{corollary}\label{cor-Y^alpha_ii} Suppose that $\alpha \in (0,2)$  and $\delta\in [0,\frac\gamma\alpha-\frac{d}2)$. Then there exists $q\in (\alpha\vee 1,\infty)$ such that $\delta +\frac {d}q+ \frac{d}2<\frac\gamma\alpha$ and
  for all $t\in [0,T]$, with probability $1$, the stochastic convolution $\int_0^t e^{(t-s)A_q^{\gamma/2}}\,dY^\alpha(s)$ takes values in $C_0^\delta (\mathcal{O})$.
\end{corollary}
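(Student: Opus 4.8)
The plan is to read the corollary off the stochastic–convolution estimate of Theorem \ref{Theorem-2}, exploiting the one parameter, $p$, that the $\alpha$-stable case leaves free. The key preliminary observation is that $Y^\alpha$ is subordinated by $Z^{\alpha/2}$, whose intensity measure is \eqref{eqn-rho2} with $\beta=\frac\alpha2$; by Definition \ref{example_2.1}(ii) this subordinator lies in $\Sub(p)$ exactly when $\int_0^1\xi^{\frac p2-1-\frac\alpha2}\,d\xi<\infty$, i.e. when $p>\alpha$. Equivalently, by Remark \ref{rem-thm-exist-levy}(5), $Y^\alpha\in\LSub(L^2,p)$ for every $p\in(\alpha,2)$, so I may run the machinery of Section \ref{sec:O-U} with any exponent strictly larger than $\alpha$, and in particular let $p\searrow\alpha$.

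Fix $\delta\in[0,\frac\gamma\alpha-\frac d2)$, so that $\delta+\frac d2<\frac\gamma\alpha$. Since $\gamma/p\nearrow\gamma/\alpha$ as $p\searrow\alpha$, I would first pick $p\in(\alpha,2)$ so close to $\alpha$ that still $\delta+\frac d2<\frac\gamma p$, and then a large $q\in(\alpha\vee1,\infty)$ with $q\ge p$ and $\delta+\frac dq+\frac d2<\frac\gamma p\,(\le\frac\gamma\alpha)$, which is possible because the left-hand side decreases to $\delta+\frac d2$ as $q\to\infty$. Setting $\eps:=\frac13(\frac\gamma p-\delta-\frac dq-\frac d2)>0$, $r:=\delta+\frac dq+\eps$ and $s:=\frac d2+\eps$, exactly as in the proof of Theorem \ref{Theorem_appl_1_ii}(iii), I obtain three facts: $s>\frac d2$, so the embedding $L^2(\mathcal{O})\embed H^{-s,q}(\mathcal{O})=U$ is $\gamma$-radonifying and $Y^\alpha$ has a $U$-valued modification (Theorem \ref{thm-exist-levy}); $r>\delta+\frac dq$, so $H_0^{r,q}(\mathcal{O})\embed C_0^\delta(\mathcal{O})$ by Sobolev embedding; and $r+s<\frac\gamma p$. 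The last inequality says precisely that the semigroup exponent $\theta=\frac{r+s}\gamma$ in estimate \eqref{ineq-semigroup} satisfies $\theta p<1$, i.e. that \eqref{ineq-A-1} holds for the family $S_{q,\gamma}(\cdot)$ viewed as operators $U\to E:=H_0^{r,q}(\mathcal{O})$. Theorem \ref{Theorem-2} then gives that the stochastic convolution is, with probability $1$ and for all $t\in[0,T]$, $H_0^{r,q}(\mathcal{O})$-valued, hence $C_0^\delta(\mathcal{O})$-valued, and the $q$ just produced is the required element of $(\alpha\vee1,\infty)$.

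The one point needing care—and the main, if modest, obstacle—is which half of Theorem \ref{Theorem-2} is used and how far $r+s$ may be pushed. For $\alpha\in(1,2)$ one has $\alpha\vee1=\alpha$, the chosen $p$ lies in $(1,2)$ and $r+s<\frac\gamma p<\gamma$, so this is literally Theorem \ref{Theorem_appl_1_ii}(iii); here one must check that $E=H_0^{r,q}(\mathcal{O})$ is of type $p$, which holds since $q\ge p>1$ (Remark \ref{rem-sob-spaces-type}), so the type-$p$ branch applies. For $\alpha\in(0,1]$ one may instead take $p\in(\alpha,1]$ and use the type-free branch of Theorem \ref{Theorem-2}; the point is that now $\frac\gamma p$ can be pushed up towards $\frac\gamma\alpha>\gamma$, so $r+s$ is allowed to exceed $\gamma$, and one invokes the smoothing bound \eqref{ineq-semigroup} directly (it is the raw analytic-semigroup estimate, the restriction $r+s<\gamma$ in Proposition \ref{prop_ass1_Delta} being inessential for the inequality itself). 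It is exactly this sharp use of $\theta p<1$, in place of the cruder $\theta<1$ underlying Corollary \ref{cor-Y^alpha_i}, that secures the full range $\delta<\frac\gamma\alpha-\frac d2$; letting $p\searrow\alpha$ in either branch reaches the stated endpoint exponent $\frac\gamma\alpha$.
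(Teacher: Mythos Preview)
Your argument is correct and, for $\alpha\ge 1$, is exactly the paper's proof spelled out: the paper simply says ``choose $p>\alpha$ with $\delta<\frac{\gamma}{p}-\frac d2$ and invoke Theorem~\ref{Theorem_appl_1_ii}'', and your choice of $p,q,r,s$ reproduces the proof of Theorem~\ref{Theorem_appl_1_ii}(iii) verbatim.

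For $\alpha\in(0,1)$ you actually go further than the paper. The paper's proof appeals only to Theorem~\ref{Theorem_appl_1_ii}, which requires $p\in(1,2)$; taking $p\searrow 1$ that route delivers at best $\delta<\gamma-\frac d2$, not the full range $\delta<\frac{\gamma}{\alpha}-\frac d2$ claimed in the statement (note that the introductory Theorem~\ref{thm-intro}, which the authors say is a consequence of Corollaries~\ref{cor-Y^alpha_i} and~\ref{cor-Y^alpha_ii}, is stated only with $\frac{\gamma}{\alpha\vee1}$). Your use of $p\in(\alpha,1]$ together with the type-free branch of Theorem~\ref{Theorem-2}, and the observation that the condition needed is $\theta p<1$ rather than $\theta<1$, genuinely closes this gap and recovers the sharper exponent $\frac{\gamma}{\alpha}$.

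One technical point to make explicit: when $\alpha$ is small the parameter $r$ you choose may exceed $2$, so the identification $H_0^{r,q}(\mathcal{O})=D((-A_q)^{r/2})$ used in the smoothing bound \eqref{ineq-semigroup} lies outside the range covered by \eqref{eqn-domains} as stated. The underlying analytic-semigroup estimate $\|(-A_q)^{a}e^{tA_q^{\gamma/2}}\|\le Ct^{-2a/\gamma}$ is of course valid for all $a>0$, and the domain identification extends to higher orders by the bounded-imaginary-powers property of $-A_q$ and Seeley's results, but you should flag this since the paper does not.
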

\begin{proof}[Proof of Corollary \ref{cor-Y^alpha_ii}]

We take $\alpha \in (0,2)$ and put $\beta:=\frac\alpha 2$. Since the subordinator $Z^{\frac\alpha2}$ satisfies the condition \eqref{cond-1} with any $p>\alpha$ we can choose $p>\alpha$ such that the condition \eqref{cond-3} is satisfied. Then the result follows from Theorem \ref{Theorem_appl_1_ii}.
\end{proof}

\begin{remark}\label{rem-space_regularity} We infer from the above theorem that as $\alpha \in (0,2)$ decreases, the spatial regularity of the stochastic convolution $\int_0^t e^{(t-s)A_q^{\gamma/2}}\,dY^\alpha(s)$ increases.
\end{remark}
\begin{remark}\label{rem-cor-Y^alpha_ii}
It should be obvious that  in Corollary \ref{cor-Y^alpha_ii}  the number $q$ plays only an auxiliary r\^ole. Firstly, we define the stochastic integral $\int_0^t e^{(t-s)A_q^{\gamma/2}}\,dY^\alpha(s)$ by means of an operator $A_q$ which depends $q$. Secondly, we prove that this stochastic integral takes values in the Sobolev space $H_0^{r,q}(\mathcal{O})$. Somehow, imprecisely, Corollary \ref{cor-Y^alpha_ii} could be formulated as follows. \\
Suppose that $\alpha \in (0,2)$  and $\delta\in [0,\frac2\alpha-\frac{d}2)$.
Then  for all $t\in [0,T]$, with probability $1$, the stochastic convolution $\int_0^t e^{(t-s)\Delta}\,dY^\alpha(s)$ takes values in $C_0^\delta$.
\end{remark}

\subsection{Comparisons with Lescot-R\"ockner \cite{Lesc_R_2004}}
\label{subsec-LR}

The case  of
equation \eqref{eqn_langevin_Delta} with $d=1$ and $Y$ such that
\begin{eqnarray*}\mathbb{E}(e^{i\lb Y(t) ,\phi \rb})&=&e^{-t\lambda(\phi)}, \;
 \lambda(\phi)=\int_0^1\phi(\sigma)^2\, d\sigma+ [\int_0^1\phi(\sigma)^2\,
 d\sigma]^{\frac\alpha2},\,\,\phi\in L^{2}(0,1),
\end{eqnarray*}
for some $\alpha \in (0,2)$,  has been studied in
\cite{Lesc_R_2004}. Note that by Theorem \ref{thm-exist-levy}, part
$i)$,
$$
\mathbb{E}(e^{i\lb X(t) ,\phi \rb})= e^{-N_1(\phi) - N_2(\phi)},
$$
where,
$$
N_1(\phi)= \int_0^t \int_0^1 |(S(s)\phi(\sigma)|^2 d\sigma ds
,\,\,N_2(\phi)= \int_0^t [\int_0^1 |(S(s)\phi(\sigma)|^2 d\sigma
]^{\alpha /2}ds,
$$
and $S$ is the heat semigroup on $L^{2}(0,1)$.

The paper \cite{Lesc_R_2004} relies on using the Sazonov topology
for the functionals $N_1$ and $N_2$ and  proves  that there exist
two Hilbert-Schmidt operators $U_1$ and $U_2$ on the space
$H=L^2(0,1)$ such that
\begin{eqnarray*}N_1(\phi)&=&\Vert U_1\phi\Vert, \;
N_2(\phi)\leq  \Vert U_2\phi\Vert^\alpha,\,\,\phi \in H.
\end{eqnarray*}
The above implies  that the functionals $N_1$ and $N_2$ are
continuous in the Sazonov topology so they are generating
probability measures on $H$. Hence the authors conclude that the
mild solution $X$ takes values in the space $L^2(0,1)$. We cover
more general case, i.e. we allow $d$ to be $\geq 1$ and consider
also fractional Laplacian. Our results in conjunction with the
purely Gaussian case as studied for example in \cite{Brzezniak_1997}
imply that also in the mixed case, the solution $X$ of equation
\eqref{eqn_langevin_Delta''} takes values in the space
$C_0^\delta(0,1)$, for $\delta<\frac{\gamma}{2}-\frac12$.

\subsection{Diagonal case} We begin with a
definition. If $b=(b_j)_{j\in\mathbb{N}}$ is a sequence of positive
numbers and $q\in [1,\infty)$, then by $l^q_b$ we mean a Banach
space  of all sequences $x=\big(x_j\big)_{j=1}^\infty$ such that
$\big(\sum_{j}b_j^{q}|x_j|^q\big)^{1/q}$ is finite. If
$a=(a_j)_{j\in\mathbb{N}}$ is a sequence of positive numbers then by
$a^{-1}$ we will understand a sequence
$a^{-1}:=(a_j^{-1})_{j\in\mathbb{N}}$.

Let us take two   increasing positive sequences $a=(a_j)_{j\in\mathbb{N}}$, $b=(b_j)_{j\in\mathbb{N}}$    convergent to $\infty$ and two numbers $r,q\in[1,\infty)$. Put
\begin{equation}\label{spaces-H,U,E}
H=l^2, \; U=l^r_{a^{-1}}, \; E=l^q_b.
\end{equation}
It  follows from Theorem 2.3 in \cite{Brz_vN_2003}, see also  \cite{Kwap_W_1992}, that  the embedding $H\embed U$ is $\gamma$-radonifying iff
\begin{equation}\label{eqn-diag-1}
\sum_{j}a_j^{-r}<\infty.
\end{equation}
Let $\lambda=(\lambda_j)_{j\in\mathbb{N}}$ be an  increasing positive sequence  convergent to $\infty$ and let $(e_j)_{j}$ be the standard ONB of $H$. Define a linear operator $A$ in $H$ by $D(A)=\{h\in H: \sum_j \lambda_j^2h_j^2 <\infty\}$, $Ah=\sum_j \lambda_jh_je_j$. Then  $A$ is a self-adjoint linear operator in $H$. Let $S(t)$ be the $C_0$ semigroup generated by the operator $-A$. Easy calculations show  that $S(t)$ has a unique extension to a bounded linear map from $U$ to $E$ and
\begin{equation}\label{eqn-diag-2}
\vert S(t)|_{L(U,E)}=\sup_{n}e^{-\lambda_nt}b_na_n^{r/q}.
\end{equation}
Consider now a special case, when for some $\alpha>0,\beta \geq 0$,
 $$a_n=\lambda_n^{\alpha}, \;b_n=\lambda_n^{\beta}.$$
 Since $ \sum_{j}a_j^{-r}=\sum_{j}\lambda_j^{-r\alpha}$
 the embedding $H\embed U$ is $\gamma$-radonifying iff
 $$\sum_{j}\lambda_j^{-r\alpha}<\infty.$$
 We also have
\begin{eqnarray*}\label{eqn-diag-2'}
\vert S(t)|_{L(U,E)}&=&\sup_{n}\big[e^{-\lambda_nt}\lambda_n^{\beta+\frac{r}{q}\alpha}\big]\\
&=& \sup_{n}\big[e^{-\lambda_nt}(t\lambda_n)^{\beta+\frac{r}{q}\alpha}\big]t^{-\beta-\frac{r}{q}\alpha}
\leq c^\ast t^{-\beta-\frac{r}{q}\alpha},
\end{eqnarray*}
where $c^\ast:=\sup_{x\geq 0}\big[e^{-x}(x)^{\beta+\frac{r}{q}\alpha}\big]=e^{-\beta-\frac{r}{q}\alpha}(\beta+\frac{r}{q}\alpha)^{\beta+\frac{r}{q}\alpha}$.

Thus we infer that if  $(\beta+\frac{r}{q}\alpha)p<1$, then

\begin{eqnarray}\label{eqn-diag-4}
\int_0^1 \vert S(t)|_{L(U,E)}^p\, dt &<&\infty.
\end{eqnarray}
 Hence we infer that for all choices of $\alpha$ and $\beta$ we can find $p\in (0,1)$ such that the condition \eqref{eqn-diag-4} holds.

Consequently, imposing some integrability conditions on the intensity of jumps of the subordinator process $Z$, i.e. $ \int_0^1s^{p/2}\rho(ds)<\infty$, one can show that the trajectories of the corresponding OU process belong to the space $E$.  These considerations can be summarized in the following result.

\begin{theorem}\label{theorem-3} Suppose that $p \in (0,2]$ and $Z$ is a subordinator process from the class $\Sub(p)$.
Suppose that $\lambda=(\lambda_j)_{j\in\mathbb{N}}$ is an  increasing positive sequence  convergent to $\infty$ and  $\alpha,\beta\in \mathbb{R}$ such that
$(\beta+\frac{r}{q}\alpha)p<1$ and $\sum_{j}\lambda_j^{-r\alpha}<\infty$. Put $a_n=\lambda_n^{\alpha}$, $b_n=\lambda_n^{\beta}$ and define the spaces $H,E,U$ by formula \eqref{spaces-H,U,E}, where in the case $p>1$ we assume additionally that $q\geq p$.  Let $W$ be a $U$-valued, $H$-cylindrical Wiener process and let $Y$ be a L{\'e}vy process defined  by $Y(t)=W(Z(t))$, $t\geq 0$. Then for all $t>0$, the stochastic convolution takes values in the space $E$.
\end{theorem}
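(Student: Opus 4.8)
The plan is to reduce the assertion to a direct application of Theorem \ref{Theorem-1}, taking the operator family $\Psi(s) := S(t-s)$ for $s \in [0,t]$. The first thing I would check is that $Y = W(Z(\cdot))$ is a genuine $U$-valued L\'evy process; this requires $W$ to exist as a $U$-valued, $H$-cylindrical Wiener process, which holds precisely when the embedding $H = l^2 \embed U = l^r_{a^{-1}}$ is $\gamma$-radonifying (cf.\ \cite{Brzezniak_1995} and Theorem \ref{thm-exist-levy}). By the quoted criterion \eqref{eqn-diag-1} (Theorem 2.3 in \cite{Brz_vN_2003}) this is equivalent to $\sum_j a_j^{-r} = \sum_j \lambda_j^{-r\alpha} < \infty$, one of the standing hypotheses. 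Hence $Y$ is $U$-valued and the stochastic convolution $\int_0^t S(t-s)\, dY(s)$ is well defined.

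Next I would verify the integrability condition \eqref{cond-1}. Writing $\theta := \beta + \frac{r}{q}\alpha$, the computation following \eqref{eqn-diag-2} gives $\vert S(u)\vert_{L(U,E)} \leq c^\ast u^{-\theta}$ for $u > 0$ with $c^\ast < \infty$. After the substitution $u = t - s$ this yields
\begin{equation*}
\int_0^t \vert \Psi(s)\vert^p_{L(U,E)}\, ds = \int_0^t \vert S(u)\vert^p_{L(U,E)}\, du \leq (c^\ast)^p \int_0^t u^{-\theta p}\, du < \infty,
\end{equation*}
the last integral being finite exactly because $\theta p = (\beta + \frac{r}{q}\alpha)p < 1$; should $\theta \leq 0$ the integrand is bounded and finiteness is immediate. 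Thus \eqref{cond-1} holds for each fixed $t > 0$ (taking $T := t$).

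It then remains to invoke Theorem \ref{Theorem-1} in the two regimes of $p$. For $p \in (0,1]$ part (i) applies directly. For $p \in (1,2]$ I must confirm that $E = l^q_b$ is of type $p$: the map $x \mapsto (b_j x_j)_j$ is an isometry of $l^q_b$ onto $l^q$, so $l^q_b$ is (isometric to) an $L^q$-space, and by the Remark following Theorem \ref{lemma-2} it is of type $p$ whenever $q \geq p$ --- precisely the extra hypothesis imposed in this regime, so part (ii) applies. In either case $\int_0^t S(t-s)\, dY(s)$ takes values in $E$ almost surely for every $t \in [0,T]$, and since $T$ is arbitrary the conclusion holds for all $t > 0$. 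I do not anticipate a genuine obstacle, as both analytic ingredients --- the $\gamma$-radonifying criterion \eqref{eqn-diag-1} and the semigroup decay following \eqref{eqn-diag-2} --- are already established; the only points demanding care are the bookkeeping of the exponent $\theta$ (including the harmless case $\theta \leq 0$) and the identification of $l^q_b$ as a type-$p$ space, which is exactly what forces the assumption $q \geq p$ when $p > 1$.
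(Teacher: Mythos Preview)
Your proposal is correct and follows essentially the same route as the paper. The paper presents the argument as the discussion \emph{preceding} the theorem statement (establishing the $\gamma$-radonifying criterion \eqref{eqn-diag-1}, the semigroup decay leading to \eqref{eqn-diag-4}, and then invoking Theorem~\ref{Theorem-1} with the type-$p$ observation recorded in the Remark immediately after), so that the theorem is announced as a summary of those considerations; you have simply reorganized the same ingredients into an explicit proof.
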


\begin{remark}{\rm  The additional assumption that  $q\geq p$ in the case $p>1$ is made so that the space $E=l^q_{(b)}$ is of type $p$.
}\end{remark}

\noindent In a subcase when $r=q$ and $A_q$ denotes a linear
operator in $X=l^q$ defined by $D(A_q)=\{h\in l^q: \sum_j
\lambda_j^qh_j^q <\infty\}$, $Ah=\sum_j \lambda_jh_je_j$, the space
$E$ defined above equals to $D(A_q^{\beta})$.  Thus we have the
following corollary.

\begin{corollary}Suppose that $Z$ is a subordinator process such that for some $p \in (0,2]$, $ \int_0^1s^{p/2}\rho(ds)<\infty$ and $q\in(p\vee1,\infty)$.
Suppose that $\lambda=(\lambda_j)_{j\in\mathbb{N}}$ be an  increasing positive sequence  convergent to $\infty$ and  $\alpha,\beta\in \mathbb{R}$ such that
$(\beta+\alpha)p<1$ and $\sum_{j}\lambda_j^{-q\alpha}<\infty$. Let $H=l^2$ and $X=l^q$.  Let $W$ be a $D(A_q^{-\beta})$-valued, $H$-cylindrical Wiener process and define a L{\'e}vy process $Y$ by $Y(t)=W(Z(t))$, $t\geq 0$. Then for all $t>0$, with probability $1$, the stochastic convolution takes values in the space $D(A_q^{\beta})$.
\end{corollary}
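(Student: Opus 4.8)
The plan is to deduce this corollary directly from Theorem \ref{theorem-3} by specializing to the sub-case $r=q$; essentially no new argument is needed, only a translation of the abstract hypotheses of that theorem into the concrete quantities stated here. Concretely I would (a) identify the weighted sequence spaces of \eqref{spaces-H,U,E} with domains of fractional powers of the diagonal operator $A_q$, (b) read off the $\gamma$-radonifying condition \eqref{eqn-diag-1} and the integrability condition \eqref{eqn-diag-4} in the case $r=q$, and (c) verify the type-$p$ requirement; Theorem \ref{theorem-3} then applies verbatim.

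First I would record the elementary identification of the spaces. Since $A_q$ acts diagonally on $X=l^q$, with $A_q e_j=\lambda_j e_j$, its fractional power $A_q^\sigma$ is again diagonal, $A_q^\sigma h=\sum_j\lambda_j^\sigma h_j e_j$, so that
\begin{equation*}
D(A_q^\sigma)=\Big\{h\in l^q:\ \sum_j\lambda_j^{\sigma q}|h_j|^q<\infty\Big\}=l^q_{(\lambda^\sigma)}.
\end{equation*}
Taking $b_n=\lambda_n^\beta$ gives the target space $E=l^q_b=D(A_q^\beta)$, while taking $a_n=\lambda_n^\alpha$ exhibits the space $U=l^q_{a^{-1}}$ on which $W$ lives; by the same identification $U=D(A_q^{-\alpha})$. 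Thus $H=l^2$, $U=D(A_q^{-\alpha})$ and $E=D(A_q^\beta)$ realize the template \eqref{spaces-H,U,E} with $r=q$.

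It then remains to check the hypotheses of Theorem \ref{theorem-3}. The $\gamma$-radonifying condition \eqref{eqn-diag-1} becomes $\sum_j a_j^{-r}=\sum_j\lambda_j^{-q\alpha}<\infty$, exactly the assumption made. With $r=q$ the semigroup exponent $\beta+\tfrac{r}{q}\alpha$ equals $\beta+\alpha$, so the condition $(\beta+\tfrac{r}{q}\alpha)p<1$ reduces to $(\beta+\alpha)p<1$; by the computation preceding Theorem \ref{theorem-3} this already yields the integrability \eqref{eqn-diag-4} of $|S(t)|_{L(U,E)}^p$, and the hypothesis $\int_0^1 s^{p/2}\rho(ds)<\infty$ is precisely $Z\in\Sub(p)$. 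Finally, when $p\in(1,2]$ Theorem \ref{theorem-3} requires $E$ to be of type $p$; since $q\in(p\vee1,\infty)$ we have $q\ge p$, and because $l^q_b$ is isometrically isomorphic to $l^q$ (through $x\mapsto(b_nx_n)$) it inherits type $p$ from $l^q$. With all hypotheses verified, Theorem \ref{theorem-3} gives that for every $t>0$ the stochastic convolution is almost surely $E=D(A_q^\beta)$-valued, which is the claim. I expect no real obstacle here: the single point demanding attention is the type-$p$ verification in the range $p>1$, and everything else is a direct transcription.
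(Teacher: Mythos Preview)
Your proposal is correct and follows exactly the paper's approach: the paper gives no separate proof for this corollary but simply notes, in the sentence preceding it, that it is the sub-case $r=q$ of Theorem~\ref{theorem-3}, with the identification $E=l^q_b=D(A_q^{\beta})$. Your explicit verification of the hypotheses (the $\gamma$-radonifying condition, the integrability bound, and the type-$p$ property via $q\geq p$) just spells out what the paper leaves implicit; note incidentally that your identification $U=D(A_q^{-\alpha})$ is the one consistent with Theorem~\ref{theorem-3}, whereas the corollary statement's ``$D(A_q^{-\beta})$-valued'' appears to be a misprint for $D(A_q^{-\alpha})$.
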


\begin{remark} In all these considerations, the space $U$ plays only an auxiliary r\^ole.
\end{remark}
\begin{example} In the very special case when $\lambda_n=n^{2/d}$ for some $d>0$, for each $\beta\in (0,\frac1p)$ we can find $q$ and $\alpha >\frac{d}{2q}$
such that $\beta<\frac1p-\alpha$. In that case, for all $t>0$, with probability $1$, the stochastic convolution takes values in the space $D(A_q^{\beta})$.
\end{example}

\noindent We apply now the above method to the equation
(\ref{eqn_langevin_Delta''})
\begin{equation}
\left\{\begin{array}{rcl}
dX(t)&=&-(-\Delta)^\gamma X(t)+dY(t),\; t\geq 0,\\
X(0)&=&0
\end{array}\right.
\end{equation}
were $\gamma$ is a positive constant, $H= L^2(\mathcal{O})$ with
$\mathcal{O}=[0,\pi]^d$ and $\Delta$  is the Laplace operator with
zero Dirichlet boundary conditions. The Laplace operator and thus also
$(-\Delta)^\gamma$ are of diagonal type, with respect to
eigenfunctions
$$
e_j (\xi_1,\ldots,\xi_d) = (\sqrt\frac{2}{\pi})^d sin( n_1
\xi_1)\ldots sin (n_d \xi_d),
$$
where $j=(n_1,\ldots,n_d)$ is a multi-index of natural numbers. The
corresponding eigenvalues of the operator $-(-\Delta)^\gamma$ are
$$
\lambda_j = (n_1^2 +\ldots +n_d ^2)^\gamma .
$$
We set $\beta=0$, $q=1$ and   $b_j = 1$, $j\in\mathbb{N}$. Let
$E$ be the space of all uniformly
and absolutely convergent series
$$
\sum_{j} e_j (\xi) x_j,\,\, \xi \in ,; x=(x_j) \in l_{(b)}^{1}.
$$
Then $E$   can be identified with $l_{(b)}^{1}$ and   $E\subset
C_{0} ([0,\pi]^d)$ with the continuous embedding. Applying now
Theorem \ref{theorem-3}, with $a_j = (\lambda_j)^\alpha$,
$j\in\mathbb{N}$, we arrive at the following result.

\begin{theorem} Suppose that $p \in (0,{\frac{2\gamma}{d}}\wedge 1)$ and $Z$ is a subordinator process from the class $\Sub(p)$,  then
the solution of the equation (\ref{eqn_langevin_Delta''}) takes
values in the space $C_{0} ([0,\pi]^d)$.
\end{theorem}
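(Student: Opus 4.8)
The plan is to realise this equation as the diagonal instance of Theorem \ref{theorem-3} already prepared in the preceding text, so that the proof reduces to checking its two quantitative hypotheses. First I would record the spectral data: the family $(e_j)$ is orthonormal and complete in $H=L^2(\mathcal{O})$, and $-(-\Delta)^\gamma e_j=-\lambda_j e_j$ with $\lambda_j=(n_1^2+\cdots+n_d^2)^\gamma$ for the multi-index $j=(n_1,\dots,n_d)$, so the semigroup generated by the drift operator acts by $S(t)e_j=e^{-\lambda_j t}e_j$. This is exactly the diagonal semigroup of the diagonal framework, with $Ae_j=\lambda_j e_j$. Under the text's choices $\beta=0$, $q=1$, $b_j\equiv1$, $a_j=\lambda_j^\alpha$, the spaces in \eqref{spaces-H,U,E} specialise to $H=l^2$, $U=l^1_{a^{-1}}$ and $E=l^1$, the latter identified with the absolutely convergent sine series $x\mapsto\sum_j x_j e_j$. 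Since $\|\sum_j x_j e_j\|_\infty\le(\sqrt{2/\pi})^d\|x\|_{l^1}$ and every $e_j$ vanishes on $\partial\mathcal{O}$, one obtains the continuous embedding $E\embed C_0([0,\pi]^d)$, so it suffices to show that the mild solution $X$, which equals the stochastic convolution $\int_0^t S(t-s)\,dY(s)$ because $X(0)=0$, is $E$-valued.

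Next I would verify the hypotheses of Theorem \ref{theorem-3}, which with $\beta=0$ and $q=1$ read $r\alpha p<1$ together with $\sum_j\lambda_j^{-r\alpha}<\infty$ (the latter being the $\gamma$-radonifying condition \eqref{eqn-diag-1} for $H\embed U$, since $a_j=\lambda_j^\alpha$). The summability condition is a $d$-dimensional lattice sum, $\sum_{n\in\N^d}(n_1^2+\cdots+n_d^2)^{-\gamma r\alpha}$, which by comparison with $\int_{|\xi|\ge1}|\xi|^{-2\gamma r\alpha}\,d\xi$ converges exactly when $2\gamma r\alpha>d$, i.e. $r\alpha>\tfrac{d}{2\gamma}$. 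Combining this with $r\alpha<\tfrac1p$, both requirements can be met by a single value of the product $r\alpha$ if and only if the window $\big(\tfrac{d}{2\gamma},\tfrac1p\big)$ is non-empty, i.e. if and only if $p<\tfrac{2\gamma}{d}$, which is part of the standing assumption.

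It then remains to fix concrete admissible parameters and invoke the theorem. I would take $r=1$ and any $\alpha\in\big(\tfrac{d}{2\gamma},\tfrac1p\big)$, which exists precisely because $p<\tfrac{2\gamma}{d}$. Since the assumption forces $p<1$, the subordinator $Z$ lies in $\Sub(p)$ with $p\le1$, so the type-$p$ proviso ``$q\ge p$'' of Theorem \ref{theorem-3} is vacuous for $q=1$ and one is in the regime handled by part (i) of Theorem \ref{Theorem-1}, which requires no geometric assumption on $E$. The embedding $H=l^2\embed U=l^1_{a^{-1}}$ is $\gamma$-radonifying because $\sum_j a_j^{-1}=\sum_j\lambda_j^{-\alpha}<\infty$, so $Y=W\circ Z$ is a genuine $U$-valued L\'evy process, and $Z\in\Sub(p)$ is exactly condition \eqref{cond-2}. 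Theorem \ref{theorem-3} then yields that the stochastic convolution takes values in $E$, hence in $C_0([0,\pi]^d)$.

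The only step that is more than bookkeeping is the asymptotic evaluation of the lattice sum and the resulting compatibility condition; everything else is direct substitution into the diagonal framework. The delicate point to \emph{emphasise} is that keeping $q=1$ (which is what makes the target space embed into $C_0$) confines us to the type-free range $p\le1$, and this is precisely why the threshold in the statement is $\tfrac{2\gamma}{d}\wedge1$ rather than merely $\tfrac{2\gamma}{d}$.
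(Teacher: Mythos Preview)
Your argument is correct and follows the same route as the paper: reduce to the diagonal Theorem~\ref{theorem-3} with $\beta=0$, $q=1$, $a_j=\lambda_j^\alpha$, and check that the two conditions $\sum_j\lambda_j^{-r\alpha}<\infty$ and $(\beta+\tfrac{r}{q}\alpha)p<1$ are simultaneously satisfiable precisely when $p<\tfrac{2\gamma}{d}$. Your version is in fact cleaner than the paper's, which contains sign slips in the inequalities (it writes $r\alpha<\tfrac{d}{2\gamma}$ and $\tfrac1p<\tfrac{d}{2\gamma}$ where the reverse inequalities are meant), and you make explicit the reason for the $\wedge\,1$ cutoff---that $q=1$ forces one into the type-free regime $p\le1$---which the paper only remarks on afterwards.
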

\begin{proof} It is enough to find $\alpha$ and $r$ such that the assumptions of Theorem \ref{theorem-3} are satisfied. Firstly, we observe that he assumption that $\sum_{j}\lambda_j^{-r\alpha}<\infty$ is satisfied iff $r\alpha\frac{2\gamma}{d}>1$, i.e. $r\alpha<\frac{d}{2\gamma}$. Secondly, we observe that he assumption that $1>(\beta+\frac{r}{q})p$ is satisfied iff $r\alpha<\frac1p$. Since we assume that $\frac1p<\frac{d}{2\gamma}$, the desired numbers $r$ and $\alpha$ can be found and hence we get the result.
\end{proof}
\begin{remark}\label{rem-open question} Actually, the proof above would work also for $p \in (1,{\frac{2\gamma}{d}}\wedge 2)$ provided the space $E$ were of type $p$. We wonder if one could modify this proof so it also works for that range of $p$. However, we do not expect that such a proof would lead to a stronger result that Theorem \ref{Theorem_appl_1_ii}.
\end{remark}

\section{Limits to  spatial regularity}
It was shown in the preceding sections that the Ornstein-Uhlenbeck processes evolve in much smaller spaces than
the noise process. It seems that the reason for that phenomenon is the analyticity of the semigroup $S(t)$, $t\geq 0$. In this section we will show that, in
general, the Ornstein-Uhlenbeck processes do not  evolve even in the spaces in which the noise lives. An example of this sort, with the Wiener driving process,
was constructed by Dettweiler and van Neerven \cite{Dettw_vN_2006}.
 These authours have cleverly adapted an example from a joint paper of the current authours with Peszat \cite{Brz_P_Z_2001} and showed that there exists a Banach space $E=U$,  a $C_0$ semigroup on $E$ and an element $f\in E\setminus \{0\}$ such that the solution to the Langevin equation \eqref{eqn_langevin_01'}, i.e.
\begin{equation}
\label{eqn_langevin_01'''}
\left\{\begin{array}{rcl}
dX(t)&=&AX(t)+fdW(t),\; t\geq 0,\\
X(0)&=&0,
\end{array}\right.
\end{equation}
where $W$ is standard $\mathbb{R}$-valued Wiener process, does not take values in the space $E$.

We shall show below that   a generalization of the results from
\cite{Brz_P_Z_2001} published  recently in \cite{Kwap_M_R_2006}
leads to a result analogous to the one from \cite{Dettw_vN_2006}.
Namely, we will indicate a Banach space $E$, a $C_0$ group (with the
generator denoted by $A$) of linear bounded maps on $E$  and a
one-dimensional L{\'e}vy process $Y$ such that for some $f\in E$ the
mild solution to problem
\begin{equation}
\label{eqn_langevin_01_levy}
\left\{\begin{array}{rcl}
dX(t)&=&AX(t)+fdY(t),\; t\geq 0,\\
X(0)&=&0,
\end{array}\right.
\end{equation}
does not take values in $E$. We should point out here that our approach to the well-posedness is different to the one used in \cite{Dettw_vN_2006} and it could be used to give another proof of  the main result from \cite{Dettw_vN_2006}.

Let $\mathbb{ S} = \mathbb{R}/\!\!\mod 2\pi$  be the unit circle and  $E=C(\mathbb{ S})$ the Banach space of all continuous real-valued functions on $\mathbb{ S}$.
Note that $E$ can be identified, and this identification will be used below, with the space $\tilde E$ of all continuous $2\pi$-periodic functions from $\mathbb{R}\to \mathbb{R}$. Given $f\in E$ we will denote by $\tilde f$ the corresponding element of $\tilde E$. Let  $A = d/d\theta$ the infinitesimal generator of the rotation group $\textsf{S}=\{S(t)\}_{t\in \mathbb{R}}$ on $E$, defined via the above identification by $ \tilde{S}_t(\tilde f)=\tilde f(\cdot+t)$.

  \begin{theorem}\label{thm-non_exist} Assume that the real valued L{\'e}vy process $Y$ has trajectories with unbounded variation.
 There exists $f\in E$ such that a.s. the mild solution to problem \eqref{eqn_langevin_01_levy}
does not take values in  $E$.
 \end{theorem}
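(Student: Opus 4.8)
The plan is to recast the mild solution as a random Fourier series on the circle and then invoke the Marcus--Pisier-type unboundedness theory for L\'evy-driven series from \cite{Kwap_M_R_2006} (which generalizes \cite{Brz_P_Z_2001}). Fix $t>0$. Since $\tilde f$ is continuous, the map $s\mapsto \tilde f(\theta+t-s)$ is a bounded deterministic integrand, so the mild solution $X(t)=\int_0^t S(t-s)f\,dY(s)$ is well defined pointwise in $\theta$ by $X(t)(\theta)=\int_0^t \tilde f(\theta+t-s)\,dY(s)$, a real-valued L\'evy integral; the object to study is the $\theta$-regularity of this random field. Using that $\textsf{S}$ is a $C_0$ \emph{group} of rotations I would write $S(t-s)=S(t)S(-s)$, hence $X(t)=S(t)Z(t)$ with $Z(t)(\phi)=\int_0^t \tilde f(\phi-s)\,dY(s)$. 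Because each $S(t)$ is a surjective isometry of $E=C(\mathbb{S})$, it suffices to produce $f$ for which $Z(t)\notin E$ almost surely.

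Next I would expand in Fourier modes. A stochastic Fubini argument shows that the $n$-th Fourier coefficient of $Z(t)$ is $\hat f(n)\,\xi_n$, where $\xi_n:=\int_0^t e^{-ins}\,dY(s)$. Thus $Z(t)$ is the random Fourier series $\sum_n \hat f(n)\,\xi_n\,e^{in\phi}$, and the question becomes whether this series represents a continuous function; by a zero--one law, for such series a.s.\ continuity is equivalent to a.s.\ boundedness. This is exactly the class treated in \cite{Brz_P_Z_2001,Kwap_M_R_2006}: in the Gaussian case $Y=W$ the $\xi_n$ are essentially i.i.d.\ complex Gaussians, recovering the Dettweiler--van Neerven setting \cite{Dettw_vN_2006}, while for a general L\'evy $Y$ the $\xi_n$ are the corresponding L\'evy stochastic integrals.

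Before the construction I would check that the hypothesis is the right one. If $Y$ had (a.s.) bounded variation $V_t(Y)$ on $[0,t]$, then for continuous $\tilde f$ one has the pathwise estimate $|Z(t)(\phi)-Z(t)(\psi)|\le \sup_{s}|\tilde f(\phi-s)-\tilde f(\psi-s)|\,V_t(Y)\le \omega_{\tilde f}(|\phi-\psi|)\,V_t(Y)$, with $\omega_{\tilde f}$ the modulus of continuity of $\tilde f$; hence $Z(t)\in C(\mathbb{S})$ and no counterexample can exist. So unbounded variation is necessary, and the content of the theorem is that it is already sufficient to build a discontinuous example.

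For the hard direction I would invoke \cite{Kwap_M_R_2006}. Following the Marcus--Pisier/Salem--Zygmund method of \cite{Brz_P_Z_2001}, the idea is to choose $f\in C(\mathbb{S})$ whose coefficients $(\hat f(n))$ make the deterministic (phased) series uniformly convergent---so $f$ is genuinely continuous---while the associated metric-entropy (majorizing-measure) criterion governing the a.s.\ boundedness of the randomized series $\sum_n \hat f(n)\,\xi_n\,e^{in\phi}$ fails; for such $f$ one gets $\sup_\phi|Z(t)(\phi)|=\infty$ a.s., whence $Z(t)\notin E$ and $X(t)\notin E$. Organizing $\hat f$ in dyadic blocks, the independence of the increments of $Y$ over disjoint time sets enters only through the joint structure of the $\xi_n$, and a Borel--Cantelli / zero--one argument upgrades in-probability block bounds to an almost sure conclusion. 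I expect the main obstacle to be precisely these almost-sure lower bounds for the block polynomials in the L\'evy setting: one must show that, when $Y$ has unbounded variation, the coefficients $\xi_n$ carry enough non-degenerate fluctuation---the anti-concentration being supplied by the infinite-variation part of $Y$, either a Gaussian component or $\int_{|x|<1}|x|\,\nu(dx)=\infty$---that the boundedness criterion is violated. This quantitative step is the heart of \cite{Kwap_M_R_2006}, and verifying that its hypotheses are implied by unbounded variation, together with the block construction of $f$, is where the real work lies.
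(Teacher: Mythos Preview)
Your approach is essentially the same as the paper's: both reduce, via the group property of the rotation semigroup, to showing that $z\mapsto \int_0^{2\pi}\tilde f(z-s)\,dY(s)$ is a.s.\ unbounded for some $f\in C(\mathbb S)$, and both invoke \cite{Kwap_M_R_2006} for this. The paper is simply terser: it fixes $t=2\pi$, uses periodicity to identify $X(2\pi)$ with that stochastic convolution, and cites Theorem~3.1 of \cite{Kwap_M_R_2006} directly, without unpacking the Fourier/Marcus--Pisier machinery you sketch.
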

 \begin{proof}
 It follows from  the proof of Theorem 3.1, p. 599 in \cite{Kwap_M_R_2006}, that we can find $f\in E$ such that the  process 
 \begin{equation}
\label{eqn_diamond}
 \int_0^{2\pi} {\tilde f}(z-s ) dY(s), \quad z\in [0,2\pi],
 \end{equation}
has unbounded paths almost surely. On the other hand, since   $\tilde f$ is $2\pi$-periodic, the solution $X$ of the process \eqref{eqn_langevin_01_levy} satisfies
\begin{eqnarray*}
X(2 \pi)&=&\int_0^{2\pi}S(2\pi-s)f\,dY(s)= \int_0^{2\pi}\tilde f(\cdot+2\pi-s)\,dY(s)\\ &=& \int_0^{2\pi} \tilde f(\cdot-s)\,dY(s).
\end{eqnarray*}
Hence,
\begin{eqnarray*}
X(2 \pi,z)&=& \int_0^{2\pi} \tilde f(z-s)\,dY(s), \mbox{ for a.a. } z\in [0,2\pi].
\end{eqnarray*}
In this way the proof in complete.
  \end{proof}
\begin{remark} The proof of Theorem \ref{thm-non_exist} was based on showing that there exists $f\in E$ such that $X(2\pi)\notin E$ a.s. In this way, our result is a direct consequence of \cite{Kwap_M_R_2006}. We believe that as in \cite{Brz_P_Z_2001} one can show that
 set of all $f\in E$ such that  $X(2\pi)\notin E$ a.s.  is of 2nd category in $E$. \\
\end{remark}
 Note that Remark \ref{rem-Theorem-2}(iv) is applicable here but with a different choice of space $E$. We get the following result which, as it  should be noted,  does not contradict Theorem \ref{thm-non_exist}.
  \begin{theorem}\label{them_exist}  Assume that  $Z\in\Sub(p)$ with $p\in (1,2]$. Let  $E=W^{\theta,q}(\mathbb{S})$ with $q\in [p,\infty)$ and $\theta\geq 0$.
Then,  for each $f\in W^{\theta,q}(\mathbb{S})$, there exists a mild solution $X$ to the problem \eqref{eqn_langevin_01_levy} and $X$ is an $W^{\theta,q}(\mathbb{S})$-valued \cadlag process.\\
  In particular, if $\theta>\frac1q$,  $f\in W^{\theta,q}(\mathbb{S})$, the mild solution $X$ to the problem \eqref{eqn_langevin_01_levy} is a $C(\mathbb{S})$-valued \cadlag process.
 \end{theorem}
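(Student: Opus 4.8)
The plan is to realize this statement as the ``group case'' foreshadowed in Remark \ref{rem-Theorem-2}(iv), now carried out on $E=W^{\theta,q}(\mathbb{S})$. Fix $T>0$ and write the candidate mild solution as the stochastic convolution
\[
X(t)=\int_0^t S(t-r)f\,dY(r),\qquad 0\le t\le T,
\]
where $S=(S(t))_{t\in\mathbb{R}}$ is the rotation group acting by $\tilde S_t\tilde f=\tilde f(\cdot+t)$. The structural fact driving everything is that translation is an isometry of $W^{\theta,q}(\mathbb{S})$: it commutes with differentiation and preserves the $L^q$-norm, so $S$ is a $C_0$ group of isometries and $\|S(r)f\|_E=\|f\|_E$ for every $r\in\mathbb{R}$.

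For existence and the $E$-valued property I would cast $f\,dY$ as $\Psi(r)\,dY(r)$ with $\Psi(r)\in L(\mathbb{R},E)$ given by $\xi\mapsto \xi\,S(t-r)f$, that is, take $U=H=\mathbb{R}$ so that $Y=W(Z(\cdot))$ is the one-dimensional subordinated process with $Z\in\Sub(p)$ (note the proof of Theorem \ref{Theorem-1} only uses $\Psi\in L(U,E)$ together with the moment bound of Remark \ref{rem-thm-exist-levy}(2), here just $\int_{\mathbb{R}}|u|^2\,\zeta_s(du)=s$). Condition \eqref{cond-1} then holds trivially, since
\[
\int_0^T\|\Psi(r)\|_{L(\mathbb{R},E)}^p\,dr=\int_0^T\|S(t-r)f\|_E^p\,dr=T\,\|f\|_E^p<\infty
\]
by the isometry bound. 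Because $q\ge p$, the space $W^{\theta,q}(\mathbb{S})$ is of (martingale) type $p$ by Remark \ref{rem-sob-spaces-type}, so Theorem \ref{Theorem-1}(ii) (equivalently Theorem \ref{Theorem-2}) applies and yields $X(t)\in E$ almost surely for each $t$.

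For the \cadlag regularity I would use the group structure exactly as in Remark \ref{rem-Theorem-2}(iv): write
\[
X(t)=S(t)\int_0^t S(-r)f\,dY(r)=S(t)\,M(t),\qquad M(t):=\int_0^t S(-r)f\,dY(r).
\]
The integrand $r\mapsto S(-r)f$ is deterministic and, by the isometry bound, lies in $L^p(0,T;E)$, so the result of \cite{Brz_H_2007p}, valid in the martingale type $p$ space $E$, furnishes an $E$-valued \cadlag modification of $M$. Since $S$ is strongly continuous, $(t,m)\mapsto S(t)m$ is continuous, and hence $X(t)=S(t)M(t)$ inherits \cadlag paths in $E$. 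The final clause then follows from the one-dimensional Sobolev embedding $W^{\theta,q}(\mathbb{S})\embed C(\mathbb{S})$, valid for $\theta>\tfrac1q$: composing this continuous embedding with $X$ shows $X$ is $C(\mathbb{S})$-valued and \cadlag.

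The one point I expect to require care is the martingale type $p$ assertion for $W^{\theta,q}(\mathbb{S})$, since \cite{Brz_H_2007p} needs martingale type $p$ rather than mere type $p$; for the $L^q$-scale the two coincide when $q\ge p$ (as $L^q$ is martingale type $\min(q,2)\ge p$), and the fractional space inherits it by interpolation as in Remark \ref{rem-sob-spaces-type}. Everything else is routine once the isometry property of the rotation group is in hand. I would also record that this does not conflict with Theorem \ref{thm-non_exist}: for $p>1$ the scalar process $Y$ may well have infinite variation, yet the obstruction of Theorem \ref{thm-non_exist} lives in $E=C(\mathbb{S})$, which is not of type $p$ for $p>1$, whereas here the smoothing hypothesis $\theta>\tfrac1q$ and the type $p$ space $W^{\theta,q}(\mathbb{S})$ are precisely what permit the positive conclusion.
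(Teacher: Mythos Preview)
Your proposal is correct and follows essentially the same route as the paper: the paper's proof simply invokes Remark~\ref{rem-Theorem-2}(iv) (the group case, using the factorization $X(t)=S(t)\int_0^t S(-r)\,dY(r)$ and \cite{Brz_H_2007p}) together with the fact that $W^{\theta,q}(\mathbb{S})$ is of type $p$, and then the Sobolev embedding $W^{\theta,q}(\mathbb{S})\hookrightarrow C(\mathbb{S})$ for $\theta>\tfrac1q$. Your write-up unpacks exactly these ingredients and, commendably, flags the type~$p$ versus martingale type~$p$ distinction that the paper glosses over.
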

\begin{proof} The first part of the Theorem is a consequence of  Remark \ref{rem-Theorem-1}(iv) since the Besov-Slobodetskii space $W^{\theta,q}(\mathbb{S})$ is a Banach space of type $p$. The second part is a consequence of the first one and the the Sobolev embedding Theorem in view of which, under the assumptions of the Theorem,
 $W^{\theta,q}(\mathbb{S}) \embed C(\mathbb{S})$.
\end{proof}

\section{Time irregularity}

 We pass now to the time regularity and show  that in general
trajectories are not even locally bounded. We will work in the
framework of section \ref{sec:O-U}. In particular, we consider a
triple of spaces $E$, $H$ and $U$ such that the middle one is a
Hilbert space and the other two are Banach spaces and
\eqref{eqn_triple} hold.

We consider an $U$-valued Wiener process $W=\big(W(t)\big)_{t\geq 0}$ (with RKHS $H$), a subordinator process $Z=\big(Z(t)\big)_{t\geq 0}$ and
 an $U$-valued  L{\'e}vy proves $Y=\big(Y(t)\big)_{t\geq 0}$ by the subordination formula
 \eqref{eqn_subordination}, i.e.
 \begin{equation}
\label{eqn_subordination''} Y(t):=W(Z(t)),\; t\geq 0.
\end{equation}

Now we assume that $-A$ is a generator of an analytic $\textsf{S}=\big(e^{tA}\big)_{t\geq 0}$ semigroup in $E$ and $U$ and consider the stochastic convolution process
$$X(t)=\int_0^t e^{(t-s)A}dY(s),\;t\geq 0.$$ For simplicity, we only consider the case $p\in (1,2]$.
As in section \ref{sec:O-U} we decompose the process $Y$ into two independent parts, see equalities \eqref{eqn-Y_2}-\eqref{eqn-Y_1}, both being \cadlag,  and consider the processes $X_1$ and $X_2$ defined by
\begin{eqnarray}
\label{eqn-X_1}
X_1(t)=\int_0^t e^{(t-s)A}\, dY_1(s),\;t\geq 0,
\\
\label{eqn-X_2}
X_2(t)=\int_0^t e^{(t-s)A}\, dY_2(s),\;t\geq 0.
\end{eqnarray}

Our first aim is to prove the following generalization of   \cite[Theorem 9.2.4, p.269]{Peszat_Z_2007}.
\begin{theorem}\label{Thm-non-cadlag} In the framework of Theorem \ref{Theorem-2}, if  $F$ is a separable Banach space such that 
 \begin{trivlist}
\item[(i)] $E\subset F \subset U$ and  $\mu_1(F)=0$, i.e. $H$ is not embedded into $F$ in a $\gamma$-radonifying way,
\item[(ii)] $e^{tA}$ is a bounded linear map from  $U$ to $F$ for $t>0$ and 
 $$\{ x\in U: \sup_{t\in(0,1]}|e^{tA}x|_F<\infty\} \subset  F.$$
\end{trivlist}
Then   with positive probability, the process $\int_0^t e^{(t-s)A}\,dY(s)$, $t\geq 0$, is not  $F$-valued c{\'a}dl{\'a}g. \\
In particular, if a Banach space $F$ is such that $F\subset H$ and condition (ii) holds,  then   with positive probability, the process $\int_0^t S(t-r) (r)\,dY(r)$, $t \geq 0$, is not $F$-valued \cadlag.
\end{theorem}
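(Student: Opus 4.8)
The plan is to make the jumps of $Y$ do the work: at a jump time the stochastic convolution jumps by exactly the jump of the driving noise, these jumps almost surely miss the subspace $F$, and the analytic semigroup turns any vector outside $F$ into an orbit whose $F$-norm explodes. First I would split $Y=Y_1+Y_2$ into its small- and large-jump parts as in \eqref{eqn-Y_2}--\eqref{eqn-Y_1}, so that $Y_2$ is compound Poisson with jump times $0<\tau_1<\tau_2<\cdots$ and intensity $\nu_2=\nu-\nu_1$. The decisive measure-theoretic fact is $\nu_2(F)=0$: by \eqref{eqn-intensity-measure} one has $\nu_2(F)=\int_0^\infty\zeta_s(F\cap\{|u|_U\geq1\})\,\rho(ds)$, and since each $\zeta_s$ is a centred Gaussian measure with reproducing kernel Hilbert space $H$, the hypothesis $\mu_1(F)=\zeta_1(F)=0$ propagates by scaling to $\zeta_s(F)=0$ for every $s>0$, so every integrand vanishes. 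Hence the jumps $\Delta Y_2(\tau_k)=\Delta Y(\tau_k)$ lie in $U\setminus F$ almost surely, while (for a genuine jump subordinator, $\rho\neq0$) $\nu_2(U)=\int_0^\infty\zeta_s(\{|u|_U\geq1\})\,\rho(ds)>0$ makes $\{\tau_1\leq T\}$ an event of positive probability.

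Next I would convert ``jump outside $F$'' into a true loss of $F$-regularity through hypothesis (ii). For $\xi\in U\setminus F$ one has $e^{sA}\xi\in F$ for each fixed $s>0$, so the supremum in (ii) can be infinite only through the behaviour as $s\downarrow0$; thus (ii) forces $\limsup_{s\downarrow0}|e^{sA}\xi|_F=\infty$. On $\{\tau_1\leq T\}$ I would then write, for $t\downarrow\tau_1$,
\[
X(t)=e^{(t-\tau_1)A}\,\Delta Y(\tau_1)+\Xi(t),\qquad
\Xi(t):=\int_0^t e^{(t-s)A}\,dY(s)-e^{(t-\tau_1)A}\,\Delta Y(\tau_1),
\]
where $\Xi$ is the convolution against $Y$ with the single jump at $\tau_1$ deleted. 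Since $\Delta Y(\tau_1)\notin F$, the first term has $F$-norm blowing up as $t\downarrow\tau_1$; if $\Xi$ has bounded $F$-norm along some sequence $t\downarrow\tau_1$, then $\limsup_{t\downarrow\tau_1}|X(t)|_F=\infty$, so $X$ is not locally bounded in $F$ and therefore admits no $F$-valued \cadlag modification, which is exactly the assertion on a set of positive probability.

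The hard part will be the control of the remainder $\Xi$ near $\tau_1^+$: its own small jumps also avoid $F$, so one cannot simply claim $|\Xi(t)|_F$ stays bounded. Here I would use independence. Conditionally on the jump size $\Delta Z(\tau_1)=h$ and on all remaining randomness, the deleted increment $\Delta Y(\tau_1)=W(Z(\tau_1^-)+h)-W(Z(\tau_1^-))$ is a Gaussian vector with law $\zeta_h$, independent of $\Xi$; one then argues that on a positive-probability event the $\Xi$-path is $F$-valued and $F$-bounded near $\tau_1$ and that the independent blow-up direction cannot be cancelled. It is precisely this conditioning step that uses the framework of Theorem \ref{Theorem-2} --- and, in the general statement, the inclusion $E\subset F$ --- to guarantee that the remaining convolution lands in $F$; this carries the real weight of the proof, the two displayed facts being comparatively routine.

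Finally, the ``in particular'' statement is an immediate specialisation. If $F\subset H$, then $F\subset H\subset U$, and since $H$ is infinite-dimensional a centred Gaussian measure with reproducing kernel Hilbert space $H$ is carried off $H$; as $F$ is a Borel subset of $H$ we get $\mu_1(F)=\zeta_1(F)\leq\zeta_1(H)=0$, i.e. $H$ cannot be $\gamma$-radonifyingly embedded into $F$. Thus the measure-theoretic part of hypothesis (i) holds automatically, and together with the assumed (ii) the general argument above applies to $\int_0^t S(t-r)\,dY(r)$, yielding the claim.
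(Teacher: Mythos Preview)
Your core idea --- jumps of $Y$ land outside $F$, and hypothesis (ii) converts this into $F$-norm blow-up of the semigroup orbit --- is exactly right, and your observation that $\zeta_s(F)=0$ for all $s>0$ (hence $\nu_2(F)=0$) is correct and slightly sharper than what the paper uses. The gap is in what you call ``the hard part'': controlling the remainder $\Xi$ near $\tau_1$. You never actually carry this out; you only say that conditioning and the inclusion $E\subset F$ should handle it. In fact, on $(\tau_1,\tau_2)$ your remainder $\Xi$ equals $X_1$ exactly, so you would need $\mathbb{P}(\tau_1\le T,\;X_1\text{ bounded in }F\text{ near }\tau_1)>0$. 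Independence of $X_1$ and $(\tau_k)_k$ reduces this to $\mathbb{P}(X_1\text{ bounded in }F\text{ on }[0,T])>0$, but nothing in the hypotheses guarantees that; Theorem~\ref{Theorem-2} only gives $X_1(t)\in E\subset F$ for each fixed $t$, not local $F$-boundedness.

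The paper sidesteps this difficulty with a short device you are missing. One works directly with the decomposition $X=X_1+X_2$, not with a single deleted jump. On $(\tau_1,\tau_2)$ one has $X_2(t)=e^{(t-\tau_1)A}\Delta Y(\tau_1)$ \emph{with no remainder at all}, so $X_2$ is $F$-unbounded on $[0,T]$ with probability $1-q_2>0$ (where $q_2:=\mathbb{P}(X_2\text{ bounded on }[0,T])$), and also $q_2>0$ since $X_2\equiv0$ on $[0,\tau_1)$. Now write $q_1:=\mathbb{P}(X_1\text{ bounded on }[0,T])$ and use independence:
\[
\mathbb{P}(X\text{ unbounded on }[0,T])\;\ge\; q_1(1-q_2)+(1-q_1)q_2,
\]
since $X$ is unbounded whenever exactly one of $X_1,X_2$ is. The right-hand side is a convex combination of $q_2$ and $1-q_2$, both strictly positive, hence is positive \emph{whatever} the value of $q_1\in[0,1]$. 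This removes any need to establish boundedness of $X_1$ and makes your ``hard part'' disappear.
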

\begin{remark}\label{rem-Thm-non-cadlag}
 \begin{trivlist}
\item[(i)] In view of Theorem \ref{Theorem-1},  the first part of condition (i) in Theorem \ref{Thm-non-cadlag} implies that $X$ is an $F$-valued process.
\item[(ii)] If  $-A$ is an infinitesimal generator of an analytic and \textit{compact} semigroup $\textsf{S}=\big(e^{tA}\big)_{t\geq 0}$  in  $U$ and $F=D(A^\beta)$, for $\beta>0$, with the graph norm,   then the condition (ii) in Theorem \ref{Thm-non-cadlag} is satisfied. Indeed, $A^\beta$ is a closed operator.
    \end{trivlist}

\end{remark}
\begin{proof} Let us fix $T>0$. Let $0<\tau_1<\tau_2<\tau_3<\cdots$
be the jump times of the process $Y_2$  and $\Delta
Y_2(\tau_k)=\Delta Y(\tau_k)- Y(\tau_k-)$, for all $k$. Since  $X_2(t)=0$ for $ t\in [0,\tau_{1})$ and $\tau_1$ is an exponential r.v. with mean value $>0$, we infer that $q_2:=\mathbb{P}(X_2 \mbox{ is bounded on } [0,T])$ is strictly positive. Assume for the time being that we  have shown that also $q_2<1$, i.e.
\begin{equation}\label{eqn-prob-unbnd}
1-q_2:=\mathbb{P}(X_2 \mbox{ is unbounded on } [0,T]) \mbox{ is strictly positive}.
\end{equation}
 Denote by $q_1$ the probability that  $X_1$ is is bounded on  $[0,T]$. Note finally that since $Y_1$ and $Y_2$ are independent processes so are $X_1$ and $X_2$. We have then the following simple chain of inequalities.
\begin{eqnarray*}
&&\mathbb{P}\big(X \mbox{ is \unbounded on } [0,T]\big) \geq  \mathbb{P}\big(X_1 \mbox{ is \bounded and } X_2  \mbox{ is \unbounded on } [0,T]  \\
&&\hspace{5truecm}\lefteqn{\mbox{ or } X_1 \mbox{ is \unbounded  and } X_2  \mbox{ is \bounded on } [0,T]\big)}\\
&=& \mathbb{P}\big(X_1 \mbox{ is \bounded  and } X_2  \mbox{ is \unbounded on } [0,T]\big)\\
&&\hspace{5truecm}\lefteqn{ +\,\mathbb{P}\big(X_1 \mbox{ is \unbounded  and } X_2  \mbox{ is \bounded on } [0,T]\big)}\\
 &=& \mathbb{P}\big(X_1 \mbox{ is \bounded  on } [0,T]\big) \, \mathbb{P}\big( X_2  \mbox{ is \unbounded on } [0,T]\big)\\
&&\hspace{3truecm}\lefteqn{+\mathbb{P}\big(X_1 \mbox{ is \unbounded  on } [0,T]\big)  \mathbb{P}\big( X_2  \mbox{ is \bounded on } [0,T]\big)}\\
 &=& q_1(1-q_2)+(1-q_1)q_2.
\end{eqnarray*}
Note that the number $q_1(1-q_2)+(1-q_1)q_2$ is a convex combination of numbers $q_2$ and $1-q_2$ and since both of them are strictly positive, so is $q_1(1-q_2)+(1-q_1)q_2$. This proves that $\mathbb{P}\big(X \mbox{ is \unbounded on } [0,T]\big)>0$ and hence the proof is complete. We only need to  prove claim  \eqref{eqn-prob-unbnd}. For  this aim let us notice that for $\omega\in\Omega$ and $t\in (\tau_1(\omega),\tau_2(\omega))$,
$$X_2(t,\omega)=e^{(t-\tau_1(\omega))A}\Delta Y(\tau_1(\omega))= e^{(t-\tau_1(\omega))A}\big( W(\beta\tau_1(\omega),\omega)-W(\beta\tau_1(\omega),\omega)\big).$$
Since with positive probability $\big( W(\beta\tau_1(\omega),\omega)-W(\beta\tau_1(\omega),\omega)\big)\notin F$, by (ii), we infer that $\limsup_{t\todown \tau_1}|X_2(t)|_F=\infty$.
\end{proof}

\section{Applications to nonlinear equations}
Let us consider, for a moment, nonlinear equation on a Banach space
$E$,
\begin{equation}
du(t)=(Au(t) + F(u(t))\,dt +dY(t),\; t\geq 0, \,u(0)=x,
\end{equation}
which can be rewritten as an integral equation
\begin{equation}\label{eq1}
u(t) = S(t) x + \int_{0}^{t} S(t-s) F((u(s)+ Y_{A}(s))ds,\,\,t\geq
0.
\end{equation}
where $Y_{A}$ is an Ornstein- Uhlenbeck process and thus solution to
the equation with $F=0$. As we already know, in general,  $Y_A$ has
locally unbounded trajectories and this  makes the equation
\ref{eq1} rather singular. However, if for instance  $F$ is
Lipschitz on $E$ then for the solvability of (\ref{eq1}) is enough
that  trajectories of $Y_A$ are in some $L^{p}([0,T], E)$.
\subsection{Time integrability of OU trajectories}
It turns out that in trajectories of the process $X$ have important
integrability properties. Taking into account the decomposition
(\ref{eqn-decom}) we write,
$$
X(t)= \int_0^t S(t-s) dY_1 (s)\,\,+\int_0^t S(t-s) dY_2 (s)= X_1(t)+
X_2(t),
$$
and from inequalities \eqref{s3} and \eqref{s4} we have the
following result.

\begin{theorem}\label{Thm-L^p-integrability}
If $p\in (1, 2]$,   $Z$ is an subordinator process from the class $\Sub(p)$, $E$ is a separable type $p$ Banach space,  then for some constant $C>0$,
\begin{equation}\label{s5}
\mathbb{E}\int_0^T|X_1(t)|_E^p \, dt\leq C T\int_0^T \vert e^{rA}\vert_{\mathcal{L}(U,E)}^p\, dr.
\end{equation}
Hence, if $\int_0^T \vert e^{rA}\vert_{\mathcal{L}(U,E)}^p\, dr<\infty$, then  $\mathbb{E}\int_0^T|X_1(t)|_E^p \, dt<\infty$ as well.\\
In particular, if for some $C>0$ and some $\theta\in (0,\frac1p)$,
 \begin{equation}\label{ineq-A-2} \vert e^{rA}\vert_{\mathcal{L}(U,E)} \leq Cr^{-\theta},\; r>0,
\end{equation}
then,  for some $C>0$ and all $T>0$,
\begin{equation}\label{s6}
\mathbb{E}\int_0^T|X_1(t)|_E^p \, dt\leq C T^{2-\theta p}.
\end{equation}
\end{theorem}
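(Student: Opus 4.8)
The plan is to reduce the statement to estimates \eqref{s3} and \eqref{s4} already obtained in the course of proving Theorem \ref{Theorem-1}, and then to integrate in time. First I would recall that, for each fixed $t$, the small-jump convolution admits the representation $X_1(t)=\int_0^t\int_{|u|<1}e^{(t-s)A}u\,\tilde\pi(du,ds)$ as an integral against the compensated Poisson random measure, exactly as in \eqref{int_Y_1b}. Applying Theorem \ref{lemma-3} to the integrand and factoring the operator norm through $|e^{(t-s)A}u|_E\leq |e^{(t-s)A}|_{\mathcal{L}(U,E)}\,|u|_U$ gives, for each fixed $t$,
$$\mathbb{E}|X_1(t)|_E^p\leq C\Big(\int_0^t |e^{(t-s)A}|_{\mathcal{L}(U,E)}^p\,ds\Big)\Big(\int_{B_U(0,1)}|u|_U^p\,\nu(du)\Big),$$
which is the content of \eqref{s4}. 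By Lemma \ref{lem-5}, inequality \eqref{s3}, the second factor is bounded by the finite constant $C_\rho:=\int_1^\infty\rho(ds)+C\int_0^1 s^{p/2}\rho(ds)$, finiteness being exactly the hypothesis $Z\in\Sub(p)$ encoded in \eqref{cond-2}.

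Next I would perform the change of variable $\sigma=t-s$ in the first factor, so that $\mathbb{E}|X_1(t)|_E^p\leq C\,C_\rho\int_0^t|e^{\sigma A}|_{\mathcal{L}(U,E)}^p\,d\sigma$ for every $t\in[0,T]$, and then integrate this pointwise bound over $t\in[0,T]$. Interchanging the expectation and the $dt$-integral is legitimate by Tonelli's theorem, since $(t,\omega)\mapsto|X_1(t,\omega)|_E^p$ is non-negative and jointly measurable. Because $\sigma\mapsto|e^{\sigma A}|_{\mathcal{L}(U,E)}^p$ is non-negative, the inner integral $\int_0^t|e^{\sigma A}|_{\mathcal{L}(U,E)}^p\,d\sigma$ is non-decreasing in $t$ and hence dominated by its value at $t=T$; pulling that constant out of the outer integral introduces the factor $T$ and produces \eqref{s5}. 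Finally, for the special case \eqref{s6} I would insert the bound \eqref{ineq-A-2} and evaluate the elementary integral $\int_0^T\sigma^{-\theta p}\,d\sigma=\tfrac{1}{1-\theta p}T^{1-\theta p}$, which converges precisely because $\theta p<1$; combining with \eqref{s5} yields $\mathbb{E}\int_0^T|X_1(t)|_E^p\,dt\leq C\,T\cdot T^{1-\theta p}=C\,T^{2-\theta p}$.

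There is no real analytic obstacle here, as the two nontrivial ingredients --- the type-$p$ moment inequality (Theorem \ref{lemma-3}) and the control of the $\nu$-mass of the unit ball (Lemma \ref{lem-5}) --- are already in hand. The only points demanding care are the justification of the Tonelli interchange and the monotonicity observation that lets one replace the $t$-dependent inner integral by its uniform bound over $[0,T]$ at the cost of a single factor $T$; this monotonicity step is precisely what makes \eqref{s5} slightly lossy, giving $T\int_0^T(\cdots)$ rather than a sharper convolution-type estimate.
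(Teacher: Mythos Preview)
Your proposal is correct and follows essentially the same route as the paper: apply the pointwise moment bound \eqref{s4} (via Theorem~\ref{lemma-3} and Lemma~\ref{lem-5}), change variables $r=t-s$, integrate in $t$ using Fubini/Tonelli, and bound the inner integral by its value at $T$ to produce the factor $T$. Your write-up is in fact more careful than the paper's, which compresses the argument into a single chain of inequalities and simply says ``By the Fubini Theorem'' without spelling out the monotonicity step or the finiteness of the $\nu$-integral.
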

\begin{proof} By the Fubini Theorem we have the following chain of inequalities.
\begin{eqnarray*}
\mathbb{E}\int_0^T  |X_1(t)|_E^p \, dt&\leq&  C \int_0^T \int_0^t \vert e^{(t-s)A}\vert_{\mathcal{L}(U,E)}^p\, ds\, dt\\
=C \int_0^T \int_0^t \vert e^{rA}\vert_{\mathcal{L}(U,E)}^p\, dr\, dt &\leq &  \int_0^T \int_0^T \vert e^{rA}\vert_{\mathcal{L}(U,E)}^p\, dr\, dt
\\&=& C T\int_0^T \vert e^{rA}\vert_{\mathcal{L}(U,E)}^p\, dr.
\end{eqnarray*}
This proves inequality \eqref{s5} while the proof of \eqref{s6} follows the same lines.
\end{proof}
\begin{remark} Note that  assumption \eqref{ineq-A-2} is sufficient for both the existence of $E$-valued modification of $X$ and for the $p$-integrability of the $E$-norm of trajectories of the process $X$.
\end{remark}
\begin{corollary}\label{Cor-L^p-integrability}
Under the assumptions of Theorem \ref{Thm-L^p-integrability}, with probability $1$, for all $T>0$,
\begin{equation}\label{s7}
\int_0^T|X(t)|_E^p \, dt <\infty.
\end{equation}
\end{corollary}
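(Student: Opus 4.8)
The plan is to exploit the decomposition $X=X_1+X_2$ already in place, treating the small-jump part $X_1$ in $L^p(\Omega)$ and the large-jump part $X_2$ path by path, and then recombining. The reason for this asymmetry is that $X_1$ is controlled in expectation by Theorem \ref{Thm-L^p-integrability}, whereas for $X_2$ no moment bound is available in general: the large jumps of $Y$ need not have a finite $p$-th moment (for instance in the $\alpha$-stable case with $p\geq\alpha$ the measure $\nu_2$ fails $\int_{|u|\geq1}|u|_U^p\,\nu(du)<\infty$), so $\mathbb{E}|X_2(t)|_E^p$ can be infinite and the argument for $X_2$ must be purely trajectory-wise.

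For the small-jump part, first I would invoke inequality \eqref{s5}: under the standing hypotheses ($p\in(1,2]$, $Z\in\Sub(p)$, $E$ separable of type $p$, and $\int_0^T|e^{rA}|_{\mathcal{L}(U,E)}^p\,dr<\infty$, which holds in particular under \eqref{ineq-A-2} since $\theta p<1$) one has
\begin{equation*}
\mathbb{E}\int_0^T|X_1(t)|_E^p\,dt\leq CT\int_0^T|e^{rA}|_{\mathcal{L}(U,E)}^p\,dr<\infty.
\end{equation*}
A nonnegative random variable with finite expectation is finite almost surely, so $\int_0^T|X_1(t)|_E^p\,dt<\infty$ with probability one.

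The hard part will be the large-jump part $X_2$. Since $Y_2$ is compound Poisson, its jump times $0<\tau_1<\tau_2<\cdots$ are discrete and on $[0,T]$ there are, almost surely, only finitely many of them, say $N_T$, with sizes $u_k:=\Delta Y(\tau_k)\in U$ and $|u_k|_U<\infty$. Hence $X_2(t)=\sum_{\tau_k\leq t}e^{(t-\tau_k)A}u_k$ is a finite sum for every $t$, and I would estimate path by path, using convexity of $s\mapsto s^p$ ($p\geq1$) to split the sum and the substitution $r=t-\tau_k$:
\begin{align*}
\int_0^T|X_2(t)|_E^p\,dt
&\leq N_T^{p-1}\sum_{k=1}^{N_T}\int_{\tau_k}^T\big|e^{(t-\tau_k)A}u_k\big|_E^p\,dt\\
&\leq N_T^{p-1}\Big(\int_0^T|e^{rA}|_{\mathcal{L}(U,E)}^p\,dr\Big)\sum_{k=1}^{N_T}|u_k|_U^p.
\end{align*}
Every factor on the right is finite almost surely, whence $\int_0^T|X_2(t)|_E^p\,dt<\infty$ a.s.; I emphasise that no expectation is taken here.

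Finally I would recombine: by $|a+b|_E^p\leq 2^{p-1}(|a|_E^p+|b|_E^p)$,
\begin{equation*}
\int_0^T|X(t)|_E^p\,dt\leq 2^{p-1}\Big(\int_0^T|X_1(t)|_E^p\,dt+\int_0^T|X_2(t)|_E^p\,dt\Big)<\infty\quad\text{a.s.}
\end{equation*}
for each fixed $T$. To pass from ``for each $T$'' to ``for all $T$'' with a single null set, I would note that $T\mapsto\int_0^T|X(t)|_E^p\,dt$ is nondecreasing, so it suffices to intersect the probability-one events obtained for $T=n$, $n\in\mathbb{N}$; this countable intersection still has probability one and yields \eqref{s7} simultaneously for all $T>0$.
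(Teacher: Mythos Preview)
Your proof is correct and follows the same strategy as the paper: handle $X_1$ in expectation via Theorem~\ref{Thm-L^p-integrability}, then treat the compound-Poisson part $X_2$ pathwise using that only finitely many large jumps occur on $[0,T]$ and that $\int_0^T|e^{rA}|_{\mathcal{L}(U,E)}^p\,dr<\infty$. The paper organises the $X_2$ estimate slightly differently---writing $X_2(t)=S(t-\tau_k)\bigl[X_2(\tau_k-)+\Delta Y_2(\tau_k)\bigr]$ on each inter-jump interval $[\tau_k,\tau_{k+1})$ rather than bounding the full sum by convexity---but this is a cosmetic variation, and your explicit passage from ``for each $T$'' to ``for all $T$'' via monotonicity is a detail the paper leaves implicit.
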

\begin{proof}It is enough to consider the case of the process $X_2$. Then, compare with \eqref{int_Y_2}, we have
\begin{equation*}
X_2(t)= \label{int_Y_2'} \int_0^t e^{(t-s)A}\,dY_2(s):=\sum_{\tau_k\leq
t}e^{(t-\tau_k)A}\Delta Y(\tau_k), t\geq 0,
\end{equation*}
where $0<\tau_1<\tau_2<\tau_3<\cdots$
are the jump times of $Y_2$  and $\Delta
Y_2(\tau_k)=\Delta Y(\tau_k)- Y(\tau_k-)$, for all $k$. Note that, c.f. with the proof of  Theorem \ref{Thm-non-cadlag}, that the process $X_2$ is \textit{continuous} as a $U$-valued but as an $E$-valued process, $X_2$ has almost surely, unbounded trajectories.\\
For each $k$ and arbitrary $\omega\in\Omega$,
\begin{eqnarray*}
X_2(t)=S(t-\tau_k)\big[ X_2(\tau_{k}-)+\Delta Y_2(\tau_k)\big],\; t\in [\tau_k,\tau_{k+1}).
\end{eqnarray*}
Note  that $X_2(t)=0$ for $ t\in [0,\tau_{1})$ so that $\int_{0}^{\tau_{1}} |X_2(t)|_E^p\, dt=0<\infty$. If $k\geq 1$, then we have
\begin{eqnarray}
\label{ineq-s8}
\int_{\tau_k}^{\tau_{k+1}} |X_2(t)|_E^p\, dt &\leq & C\int_{0}^{\tau_{k+1}-\tau_k} \vert e^{rA}\vert_{\mathcal{L}(U,E)}^p\, dr \vert X_2(\tau_{k})+\Delta Y_2(\tau_k)\vert_U^p.
\end{eqnarray}
Since the RHS of inequality \eqref{ineq-s8} is $<\infty$ almost surely, the proof is complete.
\end{proof}
\begin{remark}\label{rem-L^p-integrability}
The proof of Corollary \ref{Cor-L^p-integrability} shows that  if $q\in [1,\infty)$ and $\int_{0}^T \vert e^{rA}\vert_{\mathcal{L}(U,E)}^q\, dr<\infty$, then
with probability $1$, for all $T>0$,
\begin{equation}\label{s9}
\int_0^T|X_2(t)|_E^q \, dt <\infty.
\end{equation}
\end{remark}

\subsection{Burgers equation with additive L{\'e}vy noise.}

 The Burgers equation with L{\'e}vy  type noise has
been considered in\cite{Truman}, \cite{Peszat_Z_2007} and recently
in  \cite{Dong}. We will test here applicability of the results
obtained in previous sections. However our results are not directly
comparable with that of \cite{Truman} because we deal with different
form of noise. As we have pointed our earlier the most recent  paper \cite{Dong} considers the case of the compound
Poisson noise and in fact deals with deterministic Burgers equation.

As a preparation  we consider first integrability of the
trajectories of  the O-U process generated by the following Langevin
equation on a one-dimensional domain $(0,1)$, i.e.
 \begin{equation}\label{eqn_Langevin}
\left\{\begin{array}{rcl}
dX(t)&=&\Delta X(t)+dY(t),\; t\geq 0,\\
X(0)&=&0,
\end{array}\right.
\end{equation}
with Dirichlet boundary conditions,  where $Y$ is a L{\'e}vy process defined by \eqref{eqn_subordination_nc} with $\beta\in\Sub(p)$, $p\in (1,2]$ and $W$ being a $H$-cylindrical Wiener process with $H=H^{\theta,2}(0,1)$ for some $\theta\in (0,\frac12)$. We shall prove the following.
\begin{proposition}\label{prop-L^4-integrability}
Under the above assumptions, with probability $1$, for all $T>0$,
\begin{equation}\label{s7}
\int_0^T|X(t)|_{L^4}^4 \, dt <\infty.
\end{equation}
\end{proposition}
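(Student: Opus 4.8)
The plan is to fix $T>0$ and, as in \eqref{eqn-X_1}--\eqref{eqn-X_2}, split the noise into its small- and large-jump parts, $X=X_1+X_2$; since $|a+b|^4\le 8(|a|^4+|b|^4)$ it suffices to prove that $\int_0^T|X_1(t)|_{L^4}^4\,dt<\infty$ and $\int_0^T|X_2(t)|_{L^4}^4\,dt<\infty$ almost surely. I first fix the frame $d=1$, $H=H^{\theta,2}(0,1)$, $E=L^4(0,1)$, and $U=H^{-s,q}(0,1)$ with $q\ge 4$ large and $s>\frac12-\theta$, so that $H\embed U$ is $\gamma$-radonifying by Remark \ref{rem-prop_ass1_Delta}(i). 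Taking $r_0=\frac1q-\frac14$ gives the Sobolev embedding $H^{r_0,q}(0,1)\embed L^4(0,1)$, and the analytic smoothing estimate \eqref{ineq-semigroup} for the heat semigroup yields
\[
\vert e^{r\Delta}\vert_{\mathcal{L}(U,L^4)}\le C\,\vert e^{r\Delta}\vert_{\mathcal{L}(H^{-s,q},H^{r_0,q})}\le C\,r^{-\theta_0},\qquad \theta_0=\tfrac12(r_0+s).
\]
Since $s$ can be taken close to $\frac12-\theta$ and $q$ large, one arranges $r_0+s<\frac12$, i.e. $\theta_0<\frac14$; this quantitative gain is the key input.

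For the large-jump part the conclusion is immediate and pathwise: by the previous bound $\int_0^T\vert e^{r\Delta}\vert_{\mathcal{L}(U,L^4)}^4\,dr\le C\int_0^T r^{-4\theta_0}\,dr<\infty$ because $4\theta_0<1$, so Remark \ref{rem-L^p-integrability} (with time-integrability exponent $4$ and $E=L^4$) gives $\int_0^T|X_2(t)|_{L^4}^4\,dt<\infty$ with probability one. No moment hypothesis on the large jumps is used, which is exactly the point of isolating the compound-Poisson part $X_2$.

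The substance of the proof is the small-jump part, for which I would establish $\mathbb{E}\int_0^T|X_1(t)|_{L^4}^4\,dt<\infty$ and then pass to almost-sure finiteness by Fubini. Because $Y_1$ has jumps bounded by $1$ in $U$ it possesses finite moments of all orders, and for fixed $t,\xi$ the real random variable $X_1(t,\xi)=\int_0^t\int_{|u|<1}[e^{(t-s)\Delta}u](\xi)\,\tilde{\pi}(du,ds)$ is a centred integral against a compensated Poisson measure with compensator $ds\,\nu(du)$; the cumulant formula then gives
\[
\mathbb{E}|X_1(t,\xi)|^4=I_4(t,\xi)+3\,I_2(t,\xi)^2,\qquad I_m(t,\xi)=\int_0^t\!\!\int_{|u|<1}\big|[e^{(t-s)\Delta}u](\xi)\big|^m\,\nu(du)\,ds .
\]
Writing $[e^{r\Delta}u](\xi)=\langle u,\psi_{r,\xi}\rangle$ with $\psi_{r,\xi}=e^{r\Delta}\delta_\xi$ the heat kernel, I would use the intensity identity \eqref{eqn-intensity-measure}, the Gaussian moment identities $\int_U|\langle u,\psi\rangle|^2\,\zeta_\sigma(du)=\sigma|\psi|_{H'}^2$ and $\int_U|\langle u,\psi\rangle|^4\,\zeta_\sigma(du)=3\sigma^2|\psi|_{H'}^4$, and the uniform kernel estimate $|\psi_{r,\xi}|_{H'}^2=|p_r(\xi,\cdot)|_{H^{-\theta,2}}^2\le Cr^{\theta-\frac12}$ to reduce everything to $\sigma$- and $r$-integrals, obtaining (uniformly in $\xi$) bounds of the form $I_2(t,\xi)\le Ct^{\theta+\frac12}$ and $I_4(t,\xi)\le Ct^{2\theta}$. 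Integrating in $\xi\in(0,1)$ and $t\in(0,T)$ then gives a finite expectation.

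The delicate steps, and where I expect the real difficulty, are two. First, the $\sigma$-integrals must be controlled by splitting at $\sigma=1$: for small $\sigma$ one uses $\int_0^1\sigma^2\rho(d\sigma)\le\int_0^1\sigma\rho(d\sigma)<\infty$ from \eqref{eqn-rho}, while for large $\sigma$ one exploits the truncation $|u|_U<1$ together with $\int_1^\infty\rho(d\sigma)<\infty$ (here $Z\in\Sub(p)$ is the standing hypothesis guaranteeing, via Theorems \ref{Theorem-2} and \ref{Thm-L^p-integrability}, that $X$ is a genuine $L^4$-valued process). Second, and more importantly, the convergence of the time integral in $I_4$, namely $\int_0^t r^{2\theta-1}\,dr<\infty$, holds precisely because $\theta>0$; this is exactly the reason the driving noise is taken with RKHS $H=H^{\theta,2}(0,1)$, $\theta\in(0,\frac12)$, rather than $L^2(0,1)$, for which the fourth moment would diverge. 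Granting these estimates, $\mathbb{E}\int_0^T|X_1(t)|_{L^4}^4\,dt<\infty$, hence $\int_0^T|X_1(t)|_{L^4}^4\,dt<\infty$ a.s.; combined with the bound for $X_2$ this yields \eqref{s7}.
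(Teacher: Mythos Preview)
Your approach is correct but differs substantially from the paper's argument. The paper does not compute fourth moments directly; instead it uses an interpolation trick. Specifically, it chooses $s$ with $\frac12-\vartheta<s<\frac12$ and sets $r=s(\frac4p-1)$, so that $r+s=\frac{4s}{p}<\frac2p$, and then invokes Remark \ref{rem-Theorem_appl_1_ii} together with Corollary \ref{Cor-L^p-integrability} to obtain simultaneously that (a) $X$ is $H^{-s,q}$-valued \cadlag, hence $\sup_{[0,T]}|X(t)|_{H^{-s,q}}<\infty$ a.s., and (b) $\int_0^T|X(t)|_{H^{r,q}}^p\,dt<\infty$ a.s. The complex interpolation identity $L^q=[H^{-s,q},H^{r,q}]_{s/(r+s)}$ gives $|u|_{L^q}\le|u|_{H^{-s,q}}^{r/(r+s)}|u|_{H^{r,q}}^{s/(r+s)}$, and with $q=4$ the choice of $r$ forces $\frac{4s}{r+s}=p$, so
\[
\int_0^T|X(t)|_{L^4}^4\,dt\le\Bigl(\sup_{[0,T]}|X(t)|_{H^{-s,4}}\Bigr)^{4r/(r+s)}\int_0^T|X(t)|_{H^{r,4}}^{p}\,dt<\infty.
\]
Thus the paper trades a hard fourth-moment bound for the product of an a.s.\ bounded factor and an $L^p$-in-time factor, both of which are supplied by results already established.

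Your route---splitting $X=X_1+X_2$, treating $X_2$ pathwise via Remark \ref{rem-L^p-integrability}, and computing $\mathbb{E}|X_1(t,\xi)|^4$ through the Poisson cumulant identity $I_4+3I_2^2$ together with the layer-cake representation \eqref{eqn-intensity-measure} of $\nu$---is a legitimate alternative and actually yields the stronger conclusion $\mathbb{E}\int_0^T|X_1|_{L^4}^4\,dt<\infty$. Its cost is that the $\sigma>1$ contribution to $I_4$ forces you to control $\int_0^t|\psi_{r,\xi}|_{U^\ast}^4\,dr$, which (with $s$ near $\frac12-\theta$) requires roughly $q>1/\theta$; you allow this with ``$q$ large'' but do not state the constraint explicitly. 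You also need pointwise-in-$\xi$ heat-kernel estimates in place of operator-norm bounds. The paper's interpolation argument sidesteps both technicalities by never asking for more than $p$th moments, at the price of relying on the \cadlag property of $X$ in the ambient space $U=H^{-s,q}$.
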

\begin{proof}
By Theorem \ref{Theorem_appl_1_ii}, or rather Remark \ref{rem-Theorem_appl_1_ii}, and Corollary \ref{Cor-L^p-integrability}, for each $r<\frac2p-\frac12+\vartheta$ and $s>\frac12-\vartheta$ such that $r+s<\frac2p$, and all $q\in [p,\infty)$,  the process $X$ is $H^{-s,q}(0,1)$-\cadlag and hence is $H^{-s,q}(0,1)$-valued bounded,  takes values in $H^{r,q}(0,1)$ and
\begin{equation*}
\int_0^T|X(t)|_{H^{r,q}}^p \, dt <\infty.
\end{equation*}
Let us choose a particular $s$, i.e. such that $\frac12-\vartheta<s<\frac12$. Put then $r=s(\frac4p-1)$. Then $r+s=\frac{4s}{p}<\frac2p$ and $r<\frac2p-s<\frac2p-\frac12+\vartheta$. This means that the numbers $r$ and $s$ satisfy the assumptions of Theorem \ref{Theorem_appl_1_ii}, Remark \ref{rem-Theorem_appl_1_ii} and Corollary \ref{Cor-L^p-integrability}. Thus
\begin{equation}
\int_0^T|X(t)|_{H^{r,q}}^p \, dt <\infty \;\mbox{ and } \sup  |X(t)|_{H^{-s,q}}<\infty \mbox{ a.s.}
\end{equation}
On the other hand, y equality \eqref{eqn-sobolev_spaces},
 \begin{equation*}
L^{q}(0,1)=H^{0,q}(0,1)=[H^{-s,q}(0,1), H^{r,q}(0,1)]_{\theta},
\end{equation*}
 where
 $\theta \in (0,1)$ satisfy  $\beta=(1-\theta)(-s)+\theta r$, i.e. $\theta =\frac{s}{s+r}$. Hence,
 \begin{equation}\label{ineq-s10}
\vert u\vert_{L^{q}}=\vert u\vert_{H^{-s,q}}^{\frac{r}{r+s}} \vert u\vert_{H^{r,q}}^{\frac{s}{r+s}},\; u\in H^{r,q}(0,1).
\end{equation}
Hence, by the H\"older inequality, with $q=4$, since $\frac{4s}{r+s}=p$, we have
\begin{eqnarray}\nonumber
\int_0^T|X(t)|_{L^4}^4 \, dt &\leq& \int_0^T  \vert X(t)\vert_{H^{-s,q}}^{\frac{4r}{r+s}} \vert X(t)\vert_{H^{r,q}}^{\frac{4s}{r+s}} \, dt\\
&\leq& \big( \sup_{t\in[0,T]} \vert X(t)\vert_{H^{-s,q}}\big)^{\frac{4r}{r+s}} \int_0^T   \vert X(t)\vert_{H^{r,q}}^{p} \, dt <\infty \mbox{ a.s.}
\end{eqnarray}
This concludes the proof.\end{proof}

We pass now to the Burgers equation.  We formulate a result
which, in view of Proposition \ref{prop-L^4-integrability}, can be
proved following the lines of \cite{Brz_2006}. Thus let us
consider equation
\begin{equation}\label{eqn:Burgers}
du+[\mathrm{A}u+B(u)]\,dt=fdt+dY(t), \quad t \geq 0 \end{equation} with the initial
condition
\begin{equation}\label{eqn:Burgers-ic}
u(0)=u_0,
\end{equation}
where  $B(u)=uu_x=\frac12\frac{d}{dx}(u^2)$, $u_0 \in
\mathrm{H}=L^2(0,1)$,
 $\mathrm{V}=H_0^{1,2}(0,1)$ and $f \in
\mathrm{V}^\prime=H^{-1,2}(0,1)$ and $Y$ is a L{\'e}vy process defined by \eqref{eqn_subordination_nc} with $\beta\in\Sub(p)$, $p\in (1,2]$ and $W$ being a $H^{\theta,2}(0,1)$-cylindrical Wiener process  for some $\theta\in (0,\frac12)$.
 Let us notice that $\mathrm{V}$
is a Hilbert space with norm $\Vert u \Vert^2:= \int_0^1 \vert
\nabla u(x)\vert^2\, dx$ which is equivalent with the norm  inherited
from the Sobolev space $H^{1,2}(0,1)$, i.e. $\n{u}\n^2= \int_0^1 \vert
\nabla u(x)\vert^2 dx+\int_0^1 \vert u(x)\vert^2\, dx$.  The above problem
\rf{eqn:Burgers} is of a similar form as the stochastic Navier-Stokes Equations,
however with one essential difference. The nonlinearity $B$ defined
above still satisfies the condition
$$(B(u),u)=0, \;u \in \mathrm{V}$$
but its symmetric bilinear counterpart defined by $B(u,v)=\frac12\frac{d}{dx}(uv)$ no
longer satisfies the stronger condition $(B(u,v),v)=0$ (even for very smooth functions
$u,v$).


We propose  the following definition of a solution to problem
(\ref{eqn:Burgers})-(\ref{eqn:Burgers-ic}). For this let us fix a number $s$ such that  $\frac12-\vartheta<s<\frac12$.

\begin{definition}\label{def-solution-1}
  An $\mathrm{H}$-valued $(\mathcal{F}_t)_{t \geq 0}$ adapted and  $H^{-s,4}(0,1)$-valued \cadlag     process
$u(t)$, $ t\geq 0$ is a solution to
(\ref{eqn:Burgers})-(\ref{eqn:Burgers-ic}) iff
\begin{equation}
\label{ineq-solution} \sup_{0\le t \leq T} \vert
u(t)\vert_{\mathrm{H}}^2+\int_0^T \vert u(t)\vert^4_{L^4(0,1)}\, dt< \infty \;
\quad \hbox{for each } T>0\; \hbox{a.s.}  \end{equation} and for any
$\psi \in \mathrm{V}\cap H^{2,2}(0,1)$ and $t
>0$,  the following holds a.s.
\begin{eqnarray*}(u(t),\psi)-(u_0,\psi)&-&\int_0^t(u(s),\Delta\psi)\, ds-\frac12
\int _0^t(u^2(s),\nabla\psi)\, ds\\ &=&\int _0^t(f,\nabla\psi)\, ds
+\lb \psi, Y(t)\rb.
\end{eqnarray*}
\end{definition}

\begin{theorem}\label{th:Burgers} For any  $u_0 \in \mathrm{H}$ there exists a unique solution $u(t)$, $t \geq 0$,
 to the stochastic Burgers Equation
(\ref{eqn:Burgers})-(\ref{eqn:Burgers-ic}).
\end{theorem}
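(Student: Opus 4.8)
The plan is to remove the noise by the standard change of unknown and then solve a pathwise (deterministic) Burgers equation. Let $Y_A(t)=\int_0^t S(t-s)\,dY(s)$ denote the Ornstein--Uhlenbeck process solving \eqref{eqn_Langevin}; by Proposition \ref{prop-L^4-integrability} it is $H^{-s,4}(0,1)$-\cadlag and satisfies $\int_0^T|Y_A(t)|_{L^4}^4\,dt<\infty$ almost surely. Setting $v:=u-Y_A$ turns \eqref{eqn:Burgers} into the evolution equation
\[
\frac{dv}{dt}+\mathrm{A}v+B(v+Y_A)=f,\qquad v(0)=u_0,
\]
whose coefficients are random only through the fixed trajectory $Y_A(\cdot,\omega)$. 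I would construct $v$ pathwise, for almost every $\omega$, and then recover $u=v+Y_A$. Since $v$ will be continuous with values in $\mathrm{H}$ while $Y_A$ is $H^{-s,4}$-\cadlag, $u$ inherits the required \cadlag regularity, and the integrability \eqref{ineq-solution} follows from the a priori bounds below together with $Y_A\in L^4(0,T;L^4)$.

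For existence I would use a Galerkin scheme $v_n$ in the eigenbasis of $\mathrm{A}$ and derive the basic energy estimate by testing with $v_n$. Writing $B(v+Y_A)=B(v)+2B(v,Y_A)+B(Y_A)$ and using the cancellation $(B(v),v)=0$, the only dangerous contributions are $(2B(v,Y_A),v)$ and $(B(Y_A),v)$. Integrating by parts (the boundary terms vanish since $v\in\mathrm{V}=H_0^{1,2}(0,1)$) and applying H\"older's inequality with exponents $(4,4,2)$ gives $|(2B(v,Y_A),v)|\le |v|_{L^4}|Y_A|_{L^4}\|v\|_{\mathrm{V}}$ and $|(B(Y_A),v)|\le\tfrac12|Y_A|_{L^4}^2\|v\|_{\mathrm{V}}$. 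The one-dimensional interpolation inequality $|v|_{L^4}\le C|v|_{\mathrm{H}}^{3/4}\|v\|_{\mathrm{V}}^{1/4}$ together with Young's inequality then lets me absorb a fraction of $\|v\|_{\mathrm{V}}^2$ into the dissipation, producing
\[
\frac{d}{dt}|v|_{\mathrm{H}}^2+\|v\|_{\mathrm{V}}^2\le C\big(1+|Y_A|_{L^4}^{8/3}\big)|v|_{\mathrm{H}}^2+C\big(|Y_A|_{L^4}^4+|f|_{\mathrm{V}^\prime}^2\big).
\]
Because $\int_0^T|Y_A|_{L^4}^{8/3}\,dt<\infty$ and $\int_0^T|Y_A|_{L^4}^4\,dt<\infty$ almost surely, the pathwise Gronwall lemma yields $\sup_{[0,T]}|v|_{\mathrm{H}}^2+\int_0^T\|v\|_{\mathrm{V}}^2\,dt<\infty$ uniformly in $n$. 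Standard Aubin--Lions compactness then provides a limit $v$, and strong convergence $v_n\to v$ in $L^2(0,T;\mathrm{H})$ identifies the nonlinear term $B(v_n+Y_A)\to B(v+Y_A)$ in the weak formulation, giving a solution in the sense of Definition \ref{def-solution-1}.

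Uniqueness would follow by the same energy technique applied to the difference $w=v-\bar v$ of two solutions driven by the same $Y_A$. Since $B(v+Y_A)-B(\bar v+Y_A)=B(w,\,v+\bar v+2Y_A)$, testing with $w$ reduces, after integration by parts, to the single term $-\tfrac12\int_0^1(v+\bar v+2Y_A)\,w\,w_x\,dx$, bounded by $\tfrac12\|w\|_{\mathrm{V}}\,|w|_{L^4}\,|v+\bar v+2Y_A|_{L^4}$; the same interpolation-plus-Young step leaves a factor $|v+\bar v+2Y_A|_{L^4}^{8/3}$, integrable in time because $v,\bar v,Y_A\in L^4(0,T;L^4)$, so Gronwall forces $w\equiv 0$. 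I expect the main obstacle to be precisely these a priori estimates: the forcing $Y_A$ is only $L^4$-integrable in time and not bounded, so the argument hinges on matching the subcritical power $8/3<4$ arising from the one-dimensional interpolation with the exact integrability furnished by Proposition \ref{prop-L^4-integrability}. This is where the structural cancellation $(B(u),u)=0$ and the precise choice of the negative-order exponent $H^{-s,4}$ against $H^{r,q}$ in Theorem \ref{Theorem_appl_1_ii} are essential; the remaining verifications then follow the lines of \cite{Brz_2006}.
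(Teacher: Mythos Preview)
Your proposal is correct and follows essentially the same route as the paper. Both arguments remove the noise via the substitution $v=u-Y_A$, use Proposition~\ref{prop-L^4-integrability} to place $Y_A$ in $L^4(0,T;L^4(0,1))$ almost surely, and then solve the resulting pathwise Burgers-type equation for $v$; the only difference is that the paper packages the deterministic step into Proposition~\ref{prop:Burgers} (proved in \cite{Brz_2006}) with $z=Y_A$ and $g=f-B(Y_A)$, whereas you sketch the Galerkin/energy argument behind that proposition directly. Your Gronwall coefficient $|Y_A|_{L^4}^{8/3}$ is even slightly sharper than the $|z|_{L^4}^{4}$ appearing in the constant $K$ of Proposition~\ref{prop:Burgers}, but since $L^4$-integrability in time controls both, the two arguments are equivalent in substance.
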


Since the construction of the solution given above is pathwise, as a
byproduct of it we can show that there exists a Random Dynamical
System corresponding to the problem
(\ref{eqn:Burgers})-(\ref{eqn:Burgers-ic}). It's properties will not
be however studied in the current publication. One can consult
\cite{Brz_2006} for the Gaussian case.

A tool for studying the existence and uniqueness of solutions for the problem
(\ref{eqn:Burgers}-\ref{eqn:Burgers-ic}) is the following result whose proof can be found in \cite{Brz_2006}. Before we formulate it let denote by
$\mathcal{H}^{1,2}(0,T)$ the space of all functions $v\in L^2(0,T;\mathrm{V}\cap H^{2,2}(0,1)$ such that $v^\prime \in L^2(0,T;\mathrm{V}^\prime)$. Let us recall a well known fact, see e.g. \cite{LM-72-i}, that each element $v\in \mathcal{H}^{1,2}(0,T)$ is equal a.e. to a continuous $H$-valued function defined on the closed interval $[0,T]$ and that the well defined embedding $\mathcal{H}^{1,2}(0,T)\embed C([0,T];H)$ is bounded.

\begin{proposition}\label{prop:Burgers} Assume that $z \in L^4(0,T;L^4(0,1))$, $g \in L^2(0,T;\mathrm{V}^\prime)$ and $v_0
\in \mathrm{H}$. Then, there exists a unique $v \in \mathcal{H}^{1,2}(0,T)$ such that
\begin{eqnarray}\label{eqn:mod-Burgers}
\frac{dv}{dt}&+&Av+B(v,z)+B(z,v)+B(v,v)=g, \quad t \geq 0,
\\
\label{eqn:mod-Burgers-ic} v(0)&=&v_0.
\end{eqnarray}
Moreover, with
\begin{eqnarray*}
K^2 &:=& e^{2 \int_0^T |z(s)|_{L^4}^4\, ds},\; L^2:=|v_0|^2+2\int_0^T|g(s)|_{\mathrm{V}^\prime}^2\, ds,\\
  M^2&:=&\vert v_0\vert^2+9KL
\int_0^T \vert z(t)\vert_{L^4}^4\, dt
 + \int_0^T|g(t)|_{\mathrm{V}^\prime}^2\, dt\\
 N&:=&\vert g \vert_{L^2(0,T;\mathrm{V}^\prime)}+2KLM\vert z
\vert_{L^4(0,T;L^4(0,1))}^2+\frac{T^{1/4}}{\sqrt{2}}K^{3/2}L^{1/2},
\end{eqnarray*}
 we have
\begin{eqnarray*}
\label{ineq:apriori-1}
\sup_{t \in [0,T]} |v(t)|^2 &\leq& K^2L^2,\;
\int_0^T \vert \nabla v(t)\vert^2\, dt \leq M^2,
\\
\label{ineq:apriori-3} \int_0^T \vert
v^\prime(t)\vert_{\mathrm{V}^\prime}^2\, dt &\leq& N^2,\;
\int_0^T \vert v(t)\vert^4_{L^4(0,1)}\, dt \leq 2T^{1/2}K^3L^3M.
\end{eqnarray*}
 Finally, the map $L^2(0,T;\mathrm{V}^\prime)\times \mathrm{H} \ni (g,v_0) \mapsto v
 \in \mathcal{H}^{1,2}(0,T)$, where $v$ is the unique solution to
 \rf{eqn:mod-Burgers}-\rf{eqn:mod-Burgers-ic}, is real analytic.
 \begin{remark}\label{rem_OP2} {\it Open Problem}. Does there exists a solution to the stochastic Burgers Equation (\ref{eqn:Burgers})-(\ref{eqn:Burgers-ic}) when the   L{\'e}vy process $Y$ is defined by \eqref{eqn_subordination_nc} with $\beta\in\Sub(p)$, $p\in (1,2]$ and $W$ being a $H$-cylindrical Wiener process with $H=H^{0,2}(0,1)=L^2(0,1)$?
\end{remark}
\end{proposition}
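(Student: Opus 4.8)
The plan is to treat \eqref{eqn:mod-Burgers}--\eqref{eqn:mod-Burgers-ic} as a deterministic quasilinear parabolic problem in which the function $z$ plays the role of a fixed (frozen) datum, so that the whole argument is pathwise and no probability enters. I would establish existence by a Galerkin approximation combined with the a priori bounds, prove uniqueness by a Gronwall estimate on the difference of two solutions, derive the displayed bounds on $K,L,M,N$ from energy estimates that exploit the cancellation $(B(u),u)=0$ recalled before \eqref{eqn:Burgers}, and finally obtain real-analyticity of the data-to-solution map from the analytic implicit function theorem.

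First I would set up the Galerkin scheme using the eigenfunctions of $A=-\Delta$ with Dirichlet conditions, obtaining finite-dimensional ODEs whose solutions exist locally; the a priori estimates below make them global and furnish uniform bounds. The central computation is the energy identity obtained by testing the equation with $v$ itself:
\begin{equation*}
\tfrac12\tfrac{d}{dt}|v|_{\mathrm H}^2+\|v\|^2+\big(B(v,z)+B(z,v),v\big)+\big(B(v,v),v\big)=(g,v).
\end{equation*}
Here $\big(B(v,v),v\big)=0$, while $B(v,z)+B(z,v)=(vz)_x$, so the cross term equals $-(vz,v_x)$ and is controlled by $|vz|_{\mathrm H}\|v\|$. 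Using the one-dimensional interpolation inequalities $|v|_{L^4}\le C|v|_{\mathrm H}^{3/4}\|v\|^{1/4}$ and $|v|_{L^\infty}\le C|v|_{\mathrm H}^{1/2}\|v\|^{1/2}$ together with Young's inequality, one absorbs a multiple of $\|v\|^2$ on the left and is left with a term of the form $C|z|_{L^4}^4|v|_{\mathrm H}^2$; Gronwall's lemma then yields $\sup_{t\le T}|v(t)|_{\mathrm H}^2\le K^2L^2$ and, after integrating the surviving $\|v\|^2$, the bound $\int_0^T\|v(t)\|^2\,dt\le M^2$. The $L^4$ bound follows from $\int_0^T|v|_{L^4}^4\,dt\le C\int_0^T|v|_{\mathrm H}^3\|v\|\,dt\le C(\sup_t|v|_{\mathrm H})^3\,T^{1/2}\big(\int_0^T\|v\|^2\,dt\big)^{1/2}$, which is exactly of the stated form $2T^{1/2}K^3L^3M$; and reading $v'=g-Av-B(v,z)-B(z,v)-B(v,v)$ off the equation and estimating each term in $\mathrm V'$ (using $|B(v,w)|_{\mathrm V'}\le C|vw|_{\mathrm H}$ and the $L^4$ bound) gives $\int_0^T|v'|_{\mathrm V'}^2\,dt\le N^2$.

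With these uniform bounds the Galerkin sequence is bounded in $L^2(0,T;\mathrm V)\cap L^\infty(0,T;\mathrm H)$ with derivatives bounded in $L^2(0,T;\mathrm V')$; by the Aubin--Lions lemma a subsequence converges strongly in $L^2(0,T;\mathrm H)$, which is enough to pass to the limit in the quadratic term $B(v,v)$ and to identify the limit as a solution in $\mathcal H^{1,2}(0,T)$, any additional spatial regularity required by the definition being recovered from a further estimate obtained by testing with $Av$. Uniqueness I would obtain by subtracting two solutions $v_1,v_2$, testing the difference $w=v_1-v_2$ with $w$, bounding the resulting $B$-terms by the same interpolation inequalities, and concluding $w\equiv0$ from Gronwall since $w(0)=0$.

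The last and most delicate assertion is the real-analyticity of $(g,v_0)\mapsto v$. The plan is to apply the analytic implicit function theorem to the map
\begin{equation*}
\mathcal F(v;g,v_0):=\big(v'+Av+B(v,z)+B(z,v)+B(v,v)-g,\ v(0)-v_0\big),
\end{equation*}
viewed as a map from $\mathcal H^{1,2}(0,T)\times\big(L^2(0,T;\mathrm V')\times\mathrm H\big)$ into $L^2(0,T;\mathrm V')\times\mathrm H$. Since $\mathcal F$ is affine in $(g,v_0)$ and at most quadratic in $v$, it is a polynomial between Banach spaces and hence real-analytic; the point requiring care is that $B$ is a bounded bilinear map on $\mathcal H^{1,2}(0,T)$ with values in $L^2(0,T;\mathrm V')$, which uses the embedding $\mathcal H^{1,2}(0,T)\embed C([0,T];\mathrm H)$ together with the $L^4$ interpolation. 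The partial differential $D_v\mathcal F$ at a solution is the linearised operator $w\mapsto\big(w'+Aw+B(w,z+v)+B(z+v,w),\,w(0)\big)$, whose invertibility amounts to unique solvability of a linear non-autonomous parabolic problem with $L^4$-in-time coefficients, established by the same energy method as above. The analytic implicit function theorem then delivers analyticity of the solution map. I expect this verification --- that $D_v\mathcal F$ is an isomorphism between the chosen spaces and that $\mathcal F$ genuinely has the required mapping and analyticity properties --- to be the main obstacle, alongside tracking the precise constants $K,L,M,N$; all of this is carried out in \cite{Brz_2006}, which the statement invokes.
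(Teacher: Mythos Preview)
The paper does not actually prove this proposition: the sentence immediately preceding it reads ``the following result whose proof can be found in \cite{Brz_2006}'', and no argument is given. Your sketch --- Galerkin approximation, energy estimates exploiting $(B(v,v),v)=0$, Gronwall for the $\sup_t|v|_{\mathrm H}^2$ bound, Aubin--Lions compactness to pass to the limit, uniqueness by differencing and Gronwall, and the analytic implicit function theorem applied to the polynomial map $\mathcal F$ --- is exactly the standard route for this type of modified Burgers/Navier--Stokes problem and is, as you yourself note at the end, what is carried out in \cite{Brz_2006}. So your proposal is correct and aligned with what the paper invokes.

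One point worth flagging: the paper's definition of $\mathcal H^{1,2}(0,T)$ demands $v\in L^2(0,T;\mathrm V\cap H^{2,2})$, which is strictly more than what the displayed a~priori bounds control (they only bound $\int_0^T\|v\|^2$ and $\int_0^T|v'|_{\mathrm V'}^2$). You correctly anticipate this by mentioning a further estimate ``obtained by testing with $Av$''; just be aware that for the implicit-function argument you need the target space of $\mathcal F$ and the domain $\mathcal H^{1,2}(0,T)$ to match so that $D_v\mathcal F$ is genuinely an isomorphism --- if $\mathcal H^{1,2}$ really requires $H^{2,2}$-regularity, the linearised problem with right-hand side merely in $L^2(0,T;\mathrm V')$ will not land there, and one has to either enlarge the target space or work in the weaker space $L^2(0,T;\mathrm V)\cap\{v':\,v'\in L^2(0,T;\mathrm V')\}$ as is done in \cite{Brz_2006}.
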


\section{Generalizations to non-autonomous equations}

Without much effort some results can be generalized to
the non-autonomous case. . We use some results of Acquistapace
and Terreni  \cite{Acq_T_1987} as presented in  \cite{Veraar_2008p}.

Let us assume that  $(A(t),D(A(t)))_{t\in [0,T]}$ is a family of
closed and densely defined linear operators on a Banach space $E$.

We say that condition \textit{(AT)} is  satisfied if the following
two conditions hold:
\begin{trivlist}
\item[\textit{(AT1)}]  \label{AT1}
There exist constants $\lambda_0\in \mathbb{R}$, $K \ge 0$, and $
\phi \in (\frac{\pi}{2},\pi)$ such that for all $t\in [0,T]$,
$\Sigma(\phi,\lambda_0) \subset \varrho(A(t))$, where $\Sigma(\phi,
{\lambda_0}) = \{{\lambda_0}\}\cup \{\lambda\in \mathbb{C}\setminus
\{{\lambda_0}\}: |\arg (\lambda -{\lambda_0})|\leq \phi\}$
\[ \| R(\lambda, A(t)) \| \le \frac{K}{1+ |\lambda-\lambda_0|},\; \mbox{  for all }
\lambda \in \Sigma(\phi,\lambda_0) .\]
\item[\textit{(AT2)}]  \label{AT2}
There exist constants $L \ge 0$ and $\mu, \nu \in (0,1]$ with $\mu +
\nu >1$ such that for all $\lambda \in \Sigma(\phi,0)$ and $s,t \in
[0,T]$, with $A_{\lambda_0}(t) = A(t) -{\lambda_0}$,
\[ \| A_{\lambda_0}(t)R(\lambda,
A_{\lambda_0}(t))(A_{\lambda_0}(t)^{-1}- A_{\lambda_0}(s)^{-1})\|
\le L |t-s|^\mu (|\lambda|+1)^{-\nu}.  \]
\end{trivlist}
Below we will  denote $\kappa_{\mu,\nu} = \mu+\nu-1$. Note that by
(AT2) $\kappa_{\mu,\nu} \in (0,1]$.


If the assumption \textit{(AT1)} is satisfied  and the domains are
constant, i.e.  $D(A(0))=D(A(t))$, $t\in [0,T]$, then the H\"older
continuity of a function $ [0,T] \ni t\mapsto A(t)\in
\cL(D(A(0)),E)$ with exponent $\eta$, implies that the condition
\textit{(AT2)} is satisfied with $\mu = \eta$ and $\nu=1$, see
\cite[Section 7]{Acq_T_1987}. The conditions in that case reduce to
the conditions in the theory of Sobolevskii and Tanabe for constant
domains, cf. \cite{Lunardi_1995, Pazy_1983, Tanabe_1979}.

 The following result is known, see e.g.
\cite[Theorems 6.1-6.4]{Acq_T_1987} and \cite[Theorem
2.1]{Yagi_1991}).
\begin{theorem} \label{thm:exist-parab}
If condition (AT) holds, then there exists a unique strongly
continuous evolution family $(P(t,s))_{0\le s \le t \le T}$ that
solves \eqref{nCP} on $D(A(s))$, i.e.
\begin{equation}\label{nCP}
\begin{aligned}
u'(t) &= A(t) u(t), \ \ t\in [s,T],
\\ u(s) &= x.
\end{aligned}
\end{equation}
 and for all $x\in E$, $P(t, s) x$ is a classical solution of
\eqref{nCP}. Moreover, $(P(t,s))_{0\le s \le t \le T}$ is continuous
on $0 \le s < t \le T$ and there exists a constant $C>0$ such that
for all  $0\leq s<t\leq T$ and all $0 \leq\beta\leq\alpha \le 1$,
\begin{eqnarray}\label{eq:2_13}
\|P(t,s) x\|_{(E, D(A(t)))_{\alpha, 2}} & \le & C(t-s)^{
\beta-\alpha}\|x\|_{(E, D(A(s)))_{\beta, 2}},\; x \in  (E,
D(A(s)))_{\beta, 2},
\end{eqnarray}
where $(E, D(A(s)))_{\beta, 2}$ denotes the real interpolation
space.
\end{theorem}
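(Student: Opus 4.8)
The plan is to reconstruct the evolution family by the classical parametrix (Sobolevskii--Tanabe) method in the form refined by Acquistapace and Terreni, which is exactly what is needed to accommodate the possibly $t$-dependent domains $D(A(t))$. First I would use condition \textit{(AT1)}: after translating by $\lambda_0$ so that $0\in\varrho(A(t))$ uniformly, the sectorial resolvent bound $\|R(\lambda,A(t))\|\le K/(1+|\lambda-\lambda_0|)$ shows that each $A(t)$ generates a bounded analytic semigroup $e^{\tau A(t)}$, represented by the Dunford integral $\frac{1}{2\pi i}\int_{\Gamma}e^{\lambda\tau}R(\lambda,A(t))\,d\lambda$ over a sector boundary $\Gamma$. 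From this I would record the standard smoothing estimates, uniform in $t\in[0,T]$: $\|e^{\tau A(t)}\|\le C$ and $\|A(t)e^{\tau A(t)}\|\le C\tau^{-1}$ for $0<\tau\le T$.

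Next I would take the frozen-coefficient parametrix $G(t,s):=e^{(t-s)A(s)}$, which solves the equation with $A(s)$ in place of $A(t)$ and hence leaves a defect measured by $R_1(t,s):=\big(A(t)-A(s)\big)e^{(t-s)A(s)}$. The crucial point is that this difference of unbounded operators cannot be handled naively when the domains vary; instead I would rewrite it through the resolvent identity so that condition \textit{(AT2)} applies directly, thereby obtaining an integrable power singularity $\|R_1(t,s)\|\le C\,(t-s)^{-1+\kappa_{\mu,\nu}}$ with $\kappa_{\mu,\nu}=\mu+\nu-1\in(0,1]$. I would then seek $P(t,s)$ in the form
\[
P(t,s)=G(t,s)+\int_s^t G(t,\tau)\,\Phi(\tau,s)\,d\tau,
\]
which reduces the problem to the Volterra equation $\Phi(t,s)=R_1(t,s)+\int_s^t R_1(t,\tau)\Phi(\tau,s)\,d\tau$. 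Iterating, the successive kernels are convolutions of the power singularity with itself; a Beta-function estimate shows their norms are summable, so the Neumann series converges and produces a continuous kernel $\Phi$ on $0\le s<t\le T$.

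With $P(t,s)$ so defined I would verify the defining properties in turn: strong continuity away from the diagonal, the normalisation $P(s,s)=I$, and the Chapman--Kolmogorov identity $P(t,r)P(r,s)=P(t,s)$ for $s\le r\le t$, the last following from the uniqueness of solutions to the Volterra equation. Differentiating under the integral and using $\partial_t G(t,s)=A(s)G(t,s)$ together with the equation for $\Phi$, I would check that for $x\in D(A(s))$ the map $t\mapsto P(t,s)x$ is a classical solution of \eqref{nCP}. The interpolation bound \eqref{eq:2_13} would follow by combining the semigroup smoothing estimate $\|e^{\tau A(t)}\|_{\cL((E,D(A(t)))_{\beta,2},(E,D(A(t)))_{\alpha,2})}\le C\tau^{\beta-\alpha}$ with the kernel estimates and the reiteration theorem for real interpolation. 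Uniqueness I would obtain from the backward equation $\partial_s P(t,s)=-P(t,s)A(s)$ on $D(A(s))$: if $Q$ is another such family, then $s\mapsto P(t,s)Q(s,\sigma)x$ is constant, forcing $Q=P$.

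The main obstacle is the estimate on the defect kernel under \textit{(AT2)} and the resulting convergence of the parametrix series. Because $D(A(t))$ need not be constant, one must avoid differencing the generators directly and instead exploit the resolvent formulation of \textit{(AT2)}, which is precisely the Acquistapace--Terreni innovation over the constant-domain Sobolevskii--Tanabe theory; controlling the additional terms that arise when one also demands the sharp interpolation estimate \eqref{eq:2_13} is the most delicate part of the argument.
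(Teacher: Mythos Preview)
The paper does not actually prove this theorem: it is stated as a known result, with the proof deferred entirely to the cited references \cite[Theorems 6.1--6.4]{Acq_T_1987} and \cite[Theorem 2.1]{Yagi_1991}. Your proposal is therefore not in competition with any argument in the paper itself.

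That said, your sketch is a faithful outline of the parametrix construction carried out in those references. The sequence you describe---sectorial resolvent bounds from \textit{(AT1)} giving uniformly analytic frozen semigroups, the defect kernel $R_1(t,s)$ controlled via the resolvent formulation of \textit{(AT2)} to obtain the integrable singularity $(t-s)^{-1+\kappa_{\mu,\nu}}$, convergence of the Volterra--Neumann series by Beta-function iteration, and then verification of the evolution-family axioms and the interpolation estimate---is exactly the Acquistapace--Terreni strategy. Your emphasis on avoiding the direct difference $A(t)-A(s)$ (which is ill-defined when domains vary) and working instead through the resolvent identity encoded in \textit{(AT2)} correctly identifies the point where this theory goes beyond the constant-domain Sobolevskii--Tanabe framework. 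If you were to write this out in full you would be reproducing, in essence, the proofs in \cite{Acq_T_1987} and \cite{Yagi_1991}; for the purposes of this paper the citation suffices.
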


%


 The following result is a generalization of Theorem \ref{Theorem-2} (and of  Remark \ref{rem-Theorem-2} (iv))  to a
non-autonomous case.

We assume that a  triple of spaces $E$, $H$ and $U$ such that the
middle one is a Hilbert space and the other two are separable Banach
spaces is such that condition \eqref{eqn_triple} is satisfied.

We assume that $W=\big(W(t)\big)_{t\geq 0}$ is an $H$-cylindrical
$U$-valued Wiener process. Assume that  $Z=\big(Z(t)\big)_{t\geq 0}$
is a subordinator process  belonging to the class $\Sub(p)$, with
$p\in (1,2]$,  and, as in Theorem \ref{thm-exist-levy},
$Y=\big(Y(t)\big)_{t\geq 0}$ is  an $U$-valued  L{\'e}vy process
defined  by the subordination formula \eqref{eqn_subordination},
i.e.
\begin{equation}\label{eqn_subordination_nc}
Y(t):=W(Z(t)),\; t\geq 0.
\end{equation}
Under the above assumptions we have the following generalization of
Theorem \ref{Theorem-1} (ii).
\begin{theorem}\label{Theorem-3} Assume  that  the conditions \textit{(AT1)-(AT2)} are satisfied and that  the Banach space  $E$ is of type $p$, with $p\in (1,2]$. Assume that $T>0$, $\beta<0<\alpha$ and that $U$ is a separable Banach space such that $U \subset (E, D(A(t)))_{\beta, 2}$, for all $t\in [0,T]$, continuously uniformly in $t$. Assume  also that
\begin{equation}
\label{cond-10} p(\alpha+\beta)<1.
\end{equation} Let $X=\big(X(t)\big)_{t\in [0,T]}$  be the mild solution to the problem
\begin{equation}
\label{eqn_langevin_na} \left\{\begin{array}{rcl}
dX(t)&=&A(t)X(t)\,dt +dY(t),\; t\geq 0,\\
X(0)&=&0,
\end{array}\right.
\end{equation}
given by formula
\begin{equation}\label{eqn_langevin_na_sol}
X(t)=\int_0^tP(t,s)\,dY(s), \; t\in [0,T].
\end{equation}
Then, for each $t\in [0,T]$, $X(t)$ takes a.s. values in the space
$(E, D(A(t)))_{\alpha, 2}$.
\end{theorem}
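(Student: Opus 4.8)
The plan is to deduce the statement, for each fixed $t\in[0,T]$, from Theorem~\ref{Theorem-1}(ii), applied with the \emph{fixed} target Banach space $F_t:=(E,D(A(t)))_{\alpha,2}$ and the operator-valued integrand $\Psi(s):=P(t,s)$, $s\in[0,t)$. Indeed, by the representation \eqref{eqn_langevin_na_sol} we have $X(t)=\int_0^t P(t,s)\,dY(s)=\int_0^t\Psi(s)\,dY(s)$, so it suffices to verify that $\Psi$ fulfils the hypotheses of Theorem~\ref{Theorem-1}(ii) with $E$ there replaced by $F_t$.

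First I would record the kernel bound. Since $U\embed(E,D(A(s)))_{\beta,2}$ continuously and uniformly in $s$, there is a constant $C_0$ with $\|x\|_{(E,D(A(s)))_{\beta,2}}\le C_0\|x\|_U$ for all $s\in[0,T]$ and $x\in U$. Feeding this into the smoothing estimate \eqref{eq:2_13} (extended to the extrapolation exponent $\beta<0$) yields, for $0\le s<t$, a bound of the form $\|\Psi(s)\|_{\cL(U,F_t)}\le C(t-s)^{\beta-\alpha}$; strong continuity of $(P(t,s))$ off the diagonal makes $s\mapsto\Psi(s)$ strongly measurable. The standing assumption \eqref{cond-10} is exactly what guarantees that $s\mapsto\|\Psi(s)\|^p_{\cL(U,F_t)}$ is integrable on $(0,t)$, i.e. that $\Psi$ satisfies condition \eqref{cond-1} on $[0,t]$ with exponent $p$.

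Next I would check that $F_t$ is a separable Banach space of type $p$: $D(A(t))$ with its graph norm is isomorphic to $E$ via the resolvent (which is bounded and boundedly invertible by \textit{(AT1)}) and is therefore of type $p$, and type $p$ passes to the real interpolation space $(E,D(A(t)))_{\alpha,2}$, exactly as in Remark~\ref{rem-sob-spaces-type}. With condition \eqref{cond-1} in force and $F_t$ of type $p$, Theorem~\ref{Theorem-1}(ii) applies with target space $F_t$ and gives $X(t)=\int_0^t\Psi(s)\,dY(s)\in F_t$ almost surely, which is the assertion. (On the large-jump part the integral is the finite sum $\sum_{\tau_k\le t}P(t,\tau_k)\,\Delta Y(\tau_k)$; since almost surely no jump of $Z$, hence of $Y$, occurs precisely at $t$, each operator $P(t,\tau_k)\in\cL(U,F_t)$ is bounded, so this part is trivially $F_t$-valued.)

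The main obstacle is the very first ingredient: the validity of \eqref{eq:2_13} in the extrapolation range $\beta<0$. Theorem~\ref{thm:exist-parab} supplies this estimate only for $0\le\beta\le\alpha\le1$, so one must first extend each $P(t,s)$ to a bounded operator from the extrapolation space $(E,D(A(s)))_{\beta,2}$ into $F_t$ and establish the singular bound there. The natural route is by duality: identify $(E,D(A(s)))_{\beta,2}$ for $\beta<0$ with the dual of a suitable interpolation space built from the adjoint family $(A(t)^\ast)$, and transfer the Acquistapace--Terreni estimates to the backward evolution family $(P(t,s)^\ast)$. A secondary, milder point is that the integrand $\Psi=P(t,\cdot)$ depends on the chosen endpoint $t$, so the argument yields the conclusion separately for each $t$ rather than on a single event of full probability valid for all $t$ simultaneously---which is precisely what the statement claims.
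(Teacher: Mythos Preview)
Your approach is precisely the paper's: its proof is the one-line remark that one follows Theorem~\ref{Theorem-1}(ii) with $\Psi(s)=P(t,s)$ in place of the integrand and with \eqref{cond-10} together with \eqref{eq:2_13} standing in for condition \eqref{cond-1}. You supply the details the paper omits (type~$p$ of the interpolation target, strong measurability, the finite-sum treatment of the large-jump part), and you correctly flag the extrapolation issue that \eqref{eq:2_13} is quoted only for $0\le\beta\le\alpha$ while the theorem uses $\beta<0$; the paper leaves this implicit.

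One point in your write-up deserves correction. From your bound $\|P(t,s)\|_{\cL(U,F_t)}\le C(t-s)^{\beta-\alpha}$, integrability of $s\mapsto(t-s)^{p(\beta-\alpha)}$ on $(0,t)$ is equivalent to $p(\alpha-\beta)<1$, not to $p(\alpha+\beta)<1$. Since $\beta<0$, these are genuinely different: $\alpha-\beta=\alpha+|\beta|>\alpha+\beta$, so \eqref{cond-10} as printed is strictly weaker than what the estimate actually requires. Your sentence that \eqref{cond-10} ``is exactly what guarantees'' integrability is therefore not right; what is really needed (and what the autonomous case, e.g.\ Proposition~\ref{prop_ass1_Delta} with $\theta=(r+s)/\gamma$, suggests) is $p(\alpha-\beta)<1$. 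This appears to be a sign slip in the paper's statement that you have carried over; once corrected, your argument goes through as written.
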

\begin{proof}
The proof of this result follows the lines of the proof of Theorem \ref{Theorem-1}
(ii) with the formula \eqref{eqn_langevin_na_sol} playing the r\^ole
of the integral $\int_0^t\Psi(s)\, dY(s)$ and the assumption
\eqref{cond-10} together with the inequality \eqref{eq:2_13} being a
replacement of the condition \eqref{cond-1}.
\end{proof}

\end{document}